\newcommand{\norm}[1]{\lVert#1\rVert}
\newcommand{\CC}{\mathbb{C}}
\newcommand{\B}{\mathcal{B}}
\newcommand{\veps}{\varepsilon}
\DeclareMathOperator{\Range}{Range}
\theoremstyle{plain}
\newtheorem{theorem}{Theorem}[section]
\newtheorem{corollary}[theorem]{Corollary}
\newtheorem{lemma}[theorem]{Lemma}
\newtheorem{proposition}[theorem]{Proposition}
\newtheorem{problem}[theorem]{Problem}
\theoremstyle{plain}
\newtheorem{remark}[theorem]{Remark}
\def\R{{\rm I\kern-2ptR}}
\def\N{{\rm I\kern-2ptN}}
\def\emptyset{\mathop{\raisebox{.1ex}{$\not
\mathrel{\raisebox{.1ex}{$\scriptstyle\bigcirc$}}$}}}
\def\qed{\hskip .6em \raise1.8pt\hbox{\vrule height4pt
width6pt depth2pt}}
\def\qedd{\hskip .4em \raise1.8pt\hbox{\vrule height3pt
width5pt depth1.8pt}}
\def\sq{\hskip .6em \raise1.8pt\hbox{\vrule
height4pt width6pt depth2pt}}
\def\leaderfill{\leaders\hbox to 1em{\hss.\hss}\hfill}
\begin{document}

\title[]{Controlling almost-invariant halfspaces\\in both real and complex settings}

\author{Adi Tcaciuc}
\address{Department of Mathematics and Statistics, MacEwan University, Edmonton, Alberta, T5J P2P, Canada }
\email{atcaciuc@ualberta.ca}

\author{Ben Wallis}
\address{Department of Mathematical Sciences, Northern Illinois University, Dekalb, IL 60115}
\email{benwallis@live.com}

\begin{abstract}If $T$ is a bounded linear operator acting on an infinite-dimensional Banach space $X$, we say that a closed subspace $Y$ of $X$ of both infinite dimension and codimension is an almost-invariant halfspace (AIHS) under $T$ whenever $TY\subseteq Y+E$ for some finite-dimensional subspace $E$, or, equivalently, $(T+F)Y\subseteq Y$ for some finite-rank perturbation $F:X\to X$.  We discuss the existence of AIHS's for various restrictions on $E$ and $F$ when $X$ is a complex Banach space.  We also extend some of these and other results in the literature to the setting where $X$ is a real Banach space instead of a complex one.\end{abstract}

\thanks{The authors thank Robert B. Israel for his helpful comments.\\\indent Mathematics Subject Classification: 15A03, 15A18, 47L10, 47A10, 47A11, 47A15\\\indent Keywords:  functional analysis, Banach spaces, spectrum, local spectral theory, invariant subspaces.}

\maketitle
\theoremstyle{plain}

\section{Introduction}

Let $X$ be an infinite-dimensional Banach space and $T\in\B(X)$ a bounded linear operator.  We say that a subspace $Y$ of $X$ is {\bf almost-invariant} under $T$ whenever $TY\subseteq Y+E$ for some finite-dimensional subspace $E$ of $X$.  In this case, $E$ is called an {\bf error} subspace, and its minimum possible dimension is called the {\bf defect} of $Y$ under $T$.  This is a natural weakening of the notion of an {\bf invariant} subspace, that is, an almost-invariant subspace with defect zero.  Observe that every operator admits almost-invariant subspaces, so to make things nontrivial we say that $Y$ is a {\bf halfspace} whenever it is  a norm-closed subspace of $X$ satisfying $\text{dim}(Y)=\text{dim}(X/Y)=\infty$.  It is straightforward to verify a subspace $Y$ is an almost-invariant halfspace (henceforth {\bf AIHS} under $T$ with defect $\leq d$ if and only if it is an invariant halfspace (henceforth {\bf IHS}) under $T+F$ for some operator $F\in\mathcal{B}(X)$ with rank $\leq d$.  We may therefore extend the notion of an AIHS as follows by considering compact perturbations instead of finite-rank ones.  In this case, we say that a subspace $Y$ is {\bf essentially invariant} when it is invariant under $T+K$ for some compact operator $K$, and an {\bf essentially invariant halfspace} (hereafter, {\bf EIHS}) whenever, in addition, $Y$ is a halfspace.  The project of finding an almost-invariant halfspace, henceforth {\bf AIHS}, for every operator acting on an infinite-dimensional Banach space is called the {\bf AIHS problem}.  Similarly, the project of finding an EIHS is called the {\bf EIHS problem}.

AIHS's were first defined in \cite{APTT09}, and have been studied in numerous additional papers since then, namely \cite{Po10}, \cite{MPR13}, \cite{PT13}, \cite{SW14}, \cite{BR15}, and \cite{SW16}.  Although the AIHS problem remains open in its fullest generality, numerous partial results have been obtained.  For example, it was shown in \cite[Theorem 2.7]{PT13} that every operator acting on an infinite-dimensional complex reflexive space admits an AIHS of defect $\leq 1$.  Then, in \cite{SW14} it was shown that every operator $T$ acting on an infinite-dimensional complex Banach space and with at most countably many eigenvalues admits an AIHS, again of defect $\leq 1$, and this was extended in \cite[Remark 2.5]{SW16} to the case where $T$ merely commutes with an operator with at most countably many eigenvalues but which is not finite-rank or a multiple of the identity.

In this paper we further study two kinds of AIHS constructions.  First, in section 2, we discuss how to control the error susbpaces and finite-rank perturbations associated with an AIHS.  In particular we show that for every operator $T$ acting on an infinite-dimensional complex reflexive space $X$ and every $\varepsilon>0$ there exists a finite-rank operator $F\in\mathcal{B}(X)$ with norm $<\varepsilon$ and such that $T+F$ admits an IHS.  This represents an extension of an analogous result for complex infinite-dimensional Hilbert spaces proved in \cite[S3]{PT13}.  Also in section 2 we show that when $X$ is an infinite-dimensional complex Banach space and $T\in\mathcal{B}(X)$ with no eigenvalues and countable spectrum, then for every nonzero $x\in X$ there exists an AIHS under $T$ with error $\subseteq\text{span}\{x\}$.

Second, in section 3, we discuss the existence of AIHS's when $X$ is a real Banach space instead of a complex one.  Indeed, many of the most significant results for AIHS's have been obtained using spectral theory, and their proofs do not translate easily to the case where $X$ is real.  However, in case $\sigma(T)=\sigma(T)\cap\mathbb{R}$ or else $\sigma(T)\cap\mathbb{R}$ is infinite, the operator $T^*$ always admits an AIHS of defect $\leq 1$, and if in addition $X$ is reflexive then so does $T$.

Most of the notation is standard, such as might be found in \cite{AA02} or \cite{Ai04}; however, we shall review a few basic definitions and notations which are essential for our purposes.  First, let us use the conventions $\mathbb{N}=\{1,2,3,\cdots\}$ and $\mathbb{N}_0=\mathbb{N}\cup\{0\}$.  If $X$ and $Y$ are Banach spaces then we denote by $\mathcal{B}(X)$ the algebra of bounded linear operators on $X$.  If $A\subseteq X$ is a set then we denote by $\overline{A}$ the closure of $A$ in $X$ and $[A]$ the closure of its linear span.  Denote by
\[\mathcal{N}(T)=\{x\in X:Tx=0\}\]
the {\bf null space}, or {\bf kernel}, of $T$.  If $Y$ is a linear subspace of $X$ then
\[Y^\perp:=\left\{x^*\in X^*:x^*(y)=0\text{ for all }y\in Y\right\}\]
is called the {\bf annihilator} of $Y$, and is a closed subspace of $X^*$ (even when $Y$ is not closed).  Observe that we always have the relations
\begin{equation}\label{annihlator-relations}\mathcal{N}(T^*)=(\overline{TX})^\perp=(TX)^\perp\;\;\;\text{ and }\;\;\;\overline{T^*X^*}\subseteq\mathcal{N}(T)^\perp.\end{equation}
If $Y$ is a closed subspace of $X$ then we also have
\begin{equation}\label{annihilator-dual-relations}Y^\perp\approx(X/Y)^*\;\;\;\text{ and }\;\;\;Y^*\approx X^*/(Y^\perp),\end{equation}
where each ``$\approx$'' means ``is isomorphic to.''  Note that in the special case where $X=H$ is a Hilbert space, for any closed subspace $Y$ of $H$, the annihilator $Y^\perp$ can be viewed as a closed subspace of $H$, and serves as the orthogonal complement of $Y$ in $H$.

If $X$ is a complex Banach space and $T\in\mathcal{B}(X)$ then we denote by
\[\sigma(T):=\left\{\lambda\in\mathbb{C}:\lambda-T\text{ is not invertible in the algebra }\mathcal{B}(X)\right\}\]
the {\bf spectrum} of $T$ and $\rho(T):=\mathbb{C}\setminus\sigma(T)$ its {\bf resolvent}.  It is well known that $\sigma(T)$ is a compact subset of the complex plane.  Thus, $\rho(T)$ contains a unique unbounded component called the {\bf full resolvent}, and denoted $\rho_\infty(T)$.  Then $\sigma_\infty(T)=\mathbb{C}\setminus\rho_\infty(T)$ is called the {\bf full spectrum} of $T$.  We can also define the {\bf spectral radius}
\[r(T):=\sup_{\lambda\in\sigma(T)}|\lambda|.\]
We shall use other parts of the spectrum as well, including the following.  Let
\[\sigma_p(T):=\left\{\lambda\in\mathbb{C}:\lambda-T\text{ is not injective}\right\}\]
as the {\bf point spectrum}, and
\[\sigma_{su}(T):=\left\{\lambda\in\mathbb{C}:\lambda-T\text{ is not surjective}\right\}\]
the {\bf surjectivity spectrum} of $T$.  Observe that $\sigma_p(T)$ is precisely the set of eigenvalues under $T$, and that $\partial\sigma(T)\subseteq\sigma_{su}(T)$ (cf., e.g., \cite[Theorem 2.42]{Ai04}).

The sets $\sigma(T)$, $\sigma_p(T)$, and $\sigma_{su}(T)$ have, so far, only been defined in the complex setting.  To extend these definitions to the case where $X$ is real, we will need to consider the {\bf complexification} $X_\mathbb{C}$, defined as follows.  Note that $X\oplus iX$ is a vector space under the operations
\[(x_1\oplus iy_1)+(x_2\oplus iy_2)=(x_1+x_2)\oplus i(y_1+y_2)\;\;\;\text{ for all }x_1,x_2,y_1,y_2\in X,\text{ and}\]
\[(a+ib)(x\oplus iy)=(ax-by)\oplus i(bx+ay)\;\;\;\text{ for all }x,y\in X\text{ and }a,b\in\mathbb{R}.\]
We can then define a complete norm
\[\|x\oplus iy\|_{X_\mathbb{C}}=\sup_{\theta\in[0,2\pi]}\|x\cos\theta+i y\sin\theta\|\;\;\;\text{ for all }x,y\in X.\]
The complexification $X_\mathbb{C}$ is defined as the Banach space $X\oplus iX$, under the given vector space operations, endowed with this norm.  Note that in this case $\|\cdot\|_{X_\mathbb{C}}$ satisfies
\begin{equation}\label{complexification-estimate}\frac{1}{2}\left(\|x\|_X+\|y\|_X\right)\leq\|x\oplus iy\|_{X_\mathbb{C}}\leq\|x\|_X+\|y\|_X\end{equation}
for all $x,y\in X$.
Observe that for any continuous linear operator $T:X\to Y$ there exists a continuous linear {\bf complexification operator} $T_\mathbb{C}:X_\mathbb{C}\to Y_\mathbb{C}$ defined by
\[T_\mathbb{C}(x_1\oplus ix_2)=(Tx_1)\oplus i(Tx_2)\;\;\;\text{ for all }x_1,x_2\in X.\]
Note in particular that $\mathbb{R}_\mathbb{C}=\mathbb{C}$ so that if $f\in X^*$ then $f_\mathbb{C}\in(X_\mathbb{C})^*$.  Please see \cite[S1.1]{AA02} for further details on the complexications of $X$ and $T$.

Now we can extend the various parts of the spectrum by writing
\[\sigma(T):=\sigma(T_\mathbb{C}),\;\;\;\sigma_p(T):=\sigma_p(T_\mathbb{C}),\;\;\;\sigma_{su}(T):=\sigma_{su}(T_\mathbb{C}),\]
and so on.  Let us caution the reader that the definitions of $\sigma(T)$ and $\rho(T)$ in the real Banach space setting {\it do not coincide} with the respective definitions in the complex setting.  In particular, if $T\in\mathcal{B}(X)$ and $X$ is a real Banach space then $\lambda-T$ does not exist when $\lambda\notin\mathbb{R}$, and so of course it could not be is invertible in the algebra $\mathcal{B}(X)$ in this case.  Instead, we shall define the {\bf real spectrum} of $T\in\mathcal{B}(X)$ for a real Banach space $X$ as the set
\[\sigma_\mathbb{R}(T):=\left\{\lambda\in\mathbb{R}:\lambda-T\text{ is not invertible in the algebra }\mathcal{B}(X)\right\}\]
and $\rho_\mathbb{R}(T):=\mathbb{R}\setminus\sigma_{\mathbb{R}}(T)$ its {\bf real resolvent}. Observe that, unlike the spectrum $\sigma(T)$, the real spectrum $\sigma_\mathbb{R}(T)$ could be empty.  It is well-known, and straightforward to verify, that $\sigma_\mathbb{R}(T)=\sigma(T)\cap\mathbb{R}$ for any operator $T$ acting on a real Banach space.

In section 2 we shall be interested in the behavior of the local resolvent mappings, and so we shall also review a few basic facts from local spectral theory.  If $X$ is a complex Banach space and $T\in\mathcal{B}(X)$, we say that $T$ has the \textbf{single-valued extension property} (hereafter, {\bf SVEP}) whenever, for every open set $U\subseteq\CC$, the only analytic function $f:U\to X$ that satisfies equation $(T-\lambda I)f(\lambda)=0$ for all $\lambda\in U$ is the trivial function $f\equiv 0$.  It is not hard to see that if $\sigma_p(T)$ has empty interior then $T$ has SVEP. For any $x\in X$, the \textbf{local resolvent} set $\rho_T(x)$ is defined as the set of all complex numbers $\lambda$ for which there exists an open neighbourhood $U_\lambda$ of $\lambda$ and an analytic function $f:U_\lambda\to X$ such that for any $\mu\in U_\lambda$ we have $(\mu-T)f(\mu)=x$. This local analytic solution is unique for all $x\in X$ if and only if $T$ has SVEP; in this case there is an unique maximal analytic extension of $(\lambda-T)^{-1}x$ from $\rho(T)$ to $\rho_{T}(x)$, called \textbf{local resolvent function} for $T$ at $x$. The \textbf{local spectrum} of $T$ at $x$ is defined as $\sigma_{T}(x)=\CC\setminus\rho_{T}(x)$. Clearly, for any $x\in X$, $x\neq 0$,  $\sigma_{T}(x)$ is a nonempty, compact subset of the plane contained in $\sigma(T)$.  For more details please see either of the excellent reference books \cite{LN00} or \cite{Ai04}.

\section{Controlling errors and finite-rank perturbations}

There are two kinds of control for constructions of AIHS's which we will consider in this section.  First, we could ask for a bound on the norm of the finite rank perturbation.  Brown and Pearcy discussed a similar control for the associated compact perturbations of EIHS's under operators acting on a complex infinite-dimensional Hilbert space $H$.  In particular, they showed in \cite[Theorem 3.1]{BP71} that any $T\in\mathcal{B}(H)$ admits, for any $\varepsilon>0$ a decomposition $H=Y\oplus Y^\perp$ such that
\[T=\begin{bmatrix}\lambda+K&*\\L&*\end{bmatrix}\]
for some $\lambda\in\mathbb{C}$ and such that $K$ and $L$ are compact with norm $<\varepsilon$.  In this case,
\[(T-L)Y=(\lambda+K)Y\subseteq Y\]
so that $Y$ is an EIHS under $T$ with associated compact perturbation of norm $<\varepsilon$.  Several nice extensions of this result were established in \cite[\S3]{PT13}, in particular that, if $\partial\sigma(T)\setminus\sigma_p(T)\neq\emptyset$ then the decomposition $Y\oplus Y^\perp$ above can be chosen so that $K$ is compact, $L$ is rank-1, and they both still have norm $<\varepsilon$.

These results suggest the following open question.

\begin{problem}\label{problem-norm-control}Suppose $X$ is an infinite-dimensional Banach space and $T\in\mathcal{B}(X)$ admits an AIHS.  Does there exist, for every $\varepsilon>0$, a finite-rank operator $F\in\mathcal{B}(X)$ of norm $<\varepsilon$ such that $T+F$ admits an IHS?\end{problem}

\noindent Although this problem is not solved in general, it was shown in \cite[\S3]{PT13} that it is true when $X=H$ is an infinite-dimensional complex Hilbert space.  It turns out that only minor changes are necessary to adapt their proof to the case where $X$ is an infinite-dimensional complex reflexive space.  We will do this in several steps, beginning with the following proposition.

\begin{proposition}\label{small-norm-perturbation}Let $X$ be a (real or complex) infinite-dimensional Banach space, and let $T\in\mathcal{B}(X)$ be such that $0\notin\sigma_p(T)$.  Suppose there is a sequence $(\lambda_n)_{n=1}^\infty\subset\mathbb{C}$ of complex numbers satisfying $\lambda_n\to 0$, and also a sequence $(h_n)_{n=1}^\infty\subset X$ of vectors satisfying $\|h_n\|\to\infty$ and $(\lambda_n-T)h_n=e$ for all $n\in\mathbb{N}$ and some nonzero $e\in X$.  Then for every $\varepsilon>0$ there exists a rank-1 operator $F\in\mathcal{B}(X)$ with $FX\subseteq[e]$ and $\|F\|<\varepsilon$, and such that $T+F$ admits an IHS, say $Y$, spanned by a basic subsequence of $(h_n)_{n=1}^\infty$.  In this case, $TY\subseteq Y+[e]$.\end{proposition}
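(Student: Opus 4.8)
The strategy is to extract a basic sequence from $(h_n)$ that is moreover "skipped" in a way that lets us define the perturbation cleanly, and then to verify that the perturbed operator maps the span into itself. The key observation is that since $(\lambda_n - T)h_n = e$ for all $n$, subtracting two such equations gives $(\lambda_n - T)h_n - (\lambda_m - T)h_m = 0$, i.e.
\[
T(h_n - h_m) = \lambda_n h_n - \lambda_m h_m.
\]
So $T$ applied to a difference of two of the $h_n$'s lands in the span of just those two vectors. This is what will make $Y := [h_{n_k} : k]$ almost-invariant with error in $[e]$, and the rank-one correction $F$ will absorb the single leftover direction.

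First I would pass to a subsequence. Since $\|h_n\| \to \infty$, after normalizing we may assume (passing to a subsequence and relabeling) that $(h_n/\|h_n\|)$ is a basic sequence — this uses the standard fact that any bounded sequence with no norm-convergent subsequence... actually more carefully: one first checks the $h_n$ cannot have a subsequence lying in a finite-dimensional space (else $\|h_n\|\to\infty$ would be impossible together with $(\lambda_n - T)h_n = e$ fixed and $\lambda_n \to 0$, since then $Th_n \to -e$ would force... hmm, let me just say one extracts a \emph{basic} subsequence via the Mazur/Bessaga–Pełczyński selection principle, after verifying $0$ is not in the closure of $\{h_n/\|h_n\|\}$ is not needed; rather one needs that the sequence does not converge to $0$, which is clear since they're unit vectors, and one uses that any seminormalized sequence with a weakly-null-ish structure or simply any sequence not norming to $0$ has a basic subsequence — the cleanest is: every sequence in a Banach space with $\inf \|x_n\| > 0$ and no convergent subsequence has a basic subsequence). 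I'd cite the standard selection principle here. Relabel so that $(h_n)$ itself, normalized, is basic with basis constant $C$, and so that $\|h_n\|$ increases rapidly — say $\|h_{n+1}\| \ge 2^{n}\|h_n\|$ and $\|h_1\|$ as large as we like.

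Second, I would set $Y = [h_n : n \ge 1]$ (the closed span of the \emph{un}normalized $h_n$, which is the same as the span of the normalized ones, so it is a halfspace — infinite-dimensional by construction, and infinite-codimensional because... this needs a short argument; typically one arranges the basic subsequence to also be a \emph{halfspace}, e.g. by interlacing, or one notes that a basic sequence in a separable space can always be chosen with infinite-codimensional closed span, again via the selection principle applied with care, or one simply invokes that $(h_n)$ being basic and the ambient space infinite-dimensional, one can thin it further to leave infinite codimension). Then define $F$ on $Y$ first: I want $(T+F)h_n \in Y$. Compute $Th_n = \lambda_n h_n - e$. So I need $Fh_n = -\lambda_n h_n + e + (\text{something in } Y)$; the simplest is to kill the $e$: but $e = (\lambda_1 - T)h_1 = \lambda_1 h_1 - T h_1$, and $Th_1 \in TY$... this is circular. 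The honest approach: note $e = \lambda_n h_n - T h_n$ shows $e \in Y + TY$; better, from $h_n - h_1$ we get $T(h_n - h_1) = \lambda_n h_n - \lambda_1 h_1 \in Y$, so $TY \subseteq Y + [Th_1] = Y + [e + \lambda_1 h_1] = Y + [e]$ (using $Th_1 = \lambda_1 h_1 - e$). Hence the error is already in $[e]$, giving the last sentence $TY \subseteq Y + [e]$ for free once $Y$ is set up. Now to get an \emph{invariant} halfspace for $T + F$: I define $F$ to be a rank-one operator with range in $[e]$, of the form $Fx = \varphi(x)\, e$ for a suitable functional $\varphi \in X^*$, chosen so that $Fh_n = (\lambda_n - \lambda_1) h_n$... no — range must be in $[e]$. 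Instead: the defect being $1$, there's a functional $f$ on $X$ with $f(h_n) \to 0$ suitably and $Fx = f(x) e$, arranged so that $(T+F)h_n = \lambda_n h_n + f(h_n) e - e$, and choosing $f(h_n) = 1$ for all $n$ would give $(T+F)h_n = \lambda_n h_n \in Y$; such an $f$ (bounded, with $f(h_n)=1$ for all $n$) exists provided $\inf_n \operatorname{dist}(h_n, [h_m : m\ne n]\,)$... which for a basic sequence is controlled, but $f(h_n) = 1$ with $\|h_n\|\to\infty$ forces $\|f\|$ potentially large unless we're careful — this is exactly where the rapid growth $\|h_n\| \ge 2^n$ and the basis constant enter: one builds $f$ as a weak-$*$ limit / uses the biorthogonal functionals $(h_n^*)$ of the basic sequence, which satisfy $\|h_n^*\| \le 2C/\|h_n\|$, and sets $f = \sum_n h_n^*$, convergent in norm with $\|f\| \le 2C \sum 2^{-n}/\|h_1\| \le \varepsilon$ by taking $\|h_1\|$ large. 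Then $Fx = f(x)e$ has $\|F\| = \|f\|\,\|e\| < \varepsilon$ after one more rescaling, $FX \subseteq [e]$, and $(T+F)h_n = \lambda_n h_n \in Y$, so $(T+F)Y \subseteq Y$: $Y$ is the desired IHS.

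The main obstacle I anticipate is the simultaneous bookkeeping in the subsequence-extraction step: one must choose the subsequence of $(h_n)$ so that it is (a) basic, (b) has rapidly increasing norms with small biorthogonal functionals, and (c) has closed span of infinite codimension — all at once. Items (a) and (b) are routine via the Bessaga–Pełczyński selection principle together with norm growth, but (c) requires either thinning more aggressively or a separate argument (e.g. the span of a basic sequence can be taken inside a proper closed infinite-codimensional subspace by first restricting attention to such a subspace — but the $h_n$ are not free to move; alternatively one shows directly that the complement of a "lacunary enough" basic sequence's span is infinite-dimensional). Verifying that $F$ as constructed is genuinely rank one with range exactly $[e]$ and norm $<\varepsilon$ is then a short estimate, and the inclusion $(T+F)Y\subseteq Y$ follows from the identity $(T+F)h_n = \lambda_n h_n$ by continuity and linearity. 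I would also double-check at the end that $0 \notin \sigma_p(T)$ is used — it guarantees the $h_n$ are genuinely distinct and that $Th_n = \lambda_n h_n - e$ does not degenerate, and that $Y \ne \{0\}$ maps faithfully — so this hypothesis should surface in showing the $h_n$ can be taken linearly independent.
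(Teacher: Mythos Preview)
Your approach is essentially the same as the paper's: pass to a basic subsequence of $(h_n)$ with $\sum 1/\|h_n\|$ small, define a functional $f$ with $f(h_n)=1$ for all $n$ (the paper does this directly on $\Span\{h_n\}$ and extends by Hahn--Banach; your $f=\sum h_n^*$ is the same thing), set $F=f\otimes e$, and compute $(T+F)h_n=\lambda_n h_n$. For the infinite-codimension obstacle you flag in (c), the paper's fix is exactly your ``interlacing'' suggestion: take $Y=[h_{2n}]_{n=1}^\infty$, so the odd-indexed $h_{2n-1}$ witness infinite codimension (by basicness) while $(T+F)h_{2n}=\lambda_{2n}h_{2n}\in Y$ still holds; and the hypothesis $0\notin\sigma_p(T)$ is what guarantees $(h_n/\|h_n\|)$ has no convergent subsequence (a limit $v$ would satisfy $Tv=0$), enabling the basic-subsequence extraction.
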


\begin{proof}It was shown in the proof of \cite[Theorem 2.3]{PT13} that, under the given hypotheses, $(h_n)_{n=1}^\infty$ admits a basic subsequence.  Thus, passing to a subsequence if necessary, we may assume $(\frac{h_n}{\|h_n\|})_{n=1}^\infty$ is basic with constant $\leq K$ for some $K\in[1,\infty)$, and
\[\sum_{n=1}^\infty\frac{1}{\|h_n\|}\leq\frac{\varepsilon}{2K}.\]
Define the functional $f:\text{span}\{h_n\}_{n=1}^\infty\to\mathbb{C}$ by the rule $f(h_n)=1$ for all $n\in\mathbb{N}$.  Then for any $(a_n)_{n=1}^\infty\in c_{00}$ we have
\[|f\left(\sum_{n=1}^\infty a_n\frac{h_n}{\|h_n\|}\right)|\leq\sum_{n=1}^\infty\frac{|a_n|}{\|h_n\|}\leq\sup_{n\in\mathbb{N}}|a_n|\cdot\sum_{n=1}^\infty\frac{1}{\|h_n\|}\leq 2K\|\sum_{n=1}^\infty a_n\frac{h_n}{\|h_n\|}\|\cdot\frac{\varepsilon}{2K}\]
so that $\|f\|\leq\varepsilon$.  By Hahn-Banach we extend $f$ to a continuous linear functional $f\in X^*$ with norm $\leq\varepsilon$, so that the rank-1 operator $F=f\otimes e$ (defined by $Fx=f(x)e$ for $x\in X$) also has norm $\leq\varepsilon$ (as long as $\|e\|=1$, which we may from the beginning assume without loss of generality).  Now,
\[(T+F)h_n=\lambda h_n-(\lambda_n-T)h_n+f(h_n)e=\lambda_nh_n-e+e=\lambda_nh_n\]
so that $Y=[h_{2n}]_{n=1}^\infty$ is an IHS under $T+F$.\end{proof}

\begin{remark}
  In the previous proposition, note that, by passing to further subsequences of $(h_n)_{n=1}^\infty$, we can obtain an infinite chain $Y_1\supset Y_2\supset Y_3\supset \dots$ of distinct half-spaces of $X$, such that all are almost-invariant with defect $[x]$, whose associated rank-1 perturbations have norms tending to zero.
\end{remark}

\begin{corollary}\label{small-2.3}Let $X$ be an infinite-dimensional complex Banach space and let $T\in\mathcal{B}(X)$.  If $\partial\sigma(T)\setminus\sigma_p(T)\neq\emptyset$ then for every $\varepsilon>0$ there exists an AIHS, say $Y$, under $T$, such that $(T+F)Y\subseteq Y$ for some rank-$1$ operator $F\in\mathcal{B}(X)$ with $\|F\|<\varepsilon$.\end{corollary}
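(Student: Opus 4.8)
The plan is to deduce this from Proposition~\ref{small-norm-perturbation} by manufacturing, out of the hypothesis $\partial\sigma(T)\setminus\sigma_p(T)\neq\emptyset$, the data $(\lambda_n)$, $(h_n)$ and $e$ required there. First I would fix $\lambda_0\in\partial\sigma(T)\setminus\sigma_p(T)$ and set $S:=T-\lambda_0 I$, so that $0\in\partial\sigma(S)$ and $0\notin\sigma_p(S)$. Translating by $\lambda_0$ at the end is harmless: if $Y$ is an IHS under $S+F$ then $(T+F)Y=\bigl((S+\lambda_0 I)+F\bigr)Y\subseteq Y+\lambda_0 Y=Y$, and similarly $SY\subseteq Y+[e]$ yields $TY\subseteq Y+[e]$. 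So it suffices to handle $S$.

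Since $0\in\partial\sigma(S)=\sigma(S)\cap\overline{\rho(S)}$, I can choose $\lambda_n\in\rho(S)$ with $\lambda_n\to 0$. Because $\dist(\lambda_n,\sigma(S))\le|\lambda_n|\to 0$, and because $\|(\lambda-S)^{-1}\|\ge 1/\dist(\lambda,\sigma(S))$ for every $\lambda\in\rho(S)$ (the resolvent extends analytically, via its Neumann series, on the open disk of radius $\|(\lambda-S)^{-1}\|^{-1}$ about $\lambda$), it follows that $\|(\lambda_n-S)^{-1}\|\to\infty$. The one delicate point — and I expect it to be essentially the only obstacle — is that Proposition~\ref{small-norm-perturbation} needs a \emph{single} error vector $e$ serving all $n$, whereas a priori $\|(\lambda_n-S)^{-1}x\|$ could stay bounded for each fixed $x\in X$. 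This is exactly where the Uniform Boundedness Principle enters: since the family $\{(\lambda_n-S)^{-1}\}_{n\in\mathbb{N}}$ has unbounded norms, there must exist $e\in X$ with $\sup_n\|(\lambda_n-S)^{-1}e\|=\infty$, and such $e$ is automatically nonzero. Passing to a subsequence (and relabelling, keeping $\lambda_n\to 0$), I may assume $h_n:=(\lambda_n-S)^{-1}e$ satisfies $\|h_n\|\to\infty$; by construction $(\lambda_n-S)h_n=e$.

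With $(\lambda_n)$, $(h_n)$ and $e$ in hand, all hypotheses of Proposition~\ref{small-norm-perturbation} are met for $S$, so for the given $\varepsilon>0$ it furnishes a rank-$1$ operator $F\in\mathcal{B}(X)$ with $FX\subseteq[e]$ and $\|F\|<\varepsilon$ such that $S+F$ admits an IHS $Y$, spanned by a basic subsequence of $(h_n)$, with moreover $SY\subseteq Y+[e]$. By the reduction in the first paragraph, $(T+F)Y\subseteq Y$ and $TY\subseteq Y+[e]$; since $Y$ is a halfspace and $F$ has rank $1$ with $\|F\|<\varepsilon$, this $Y$ is the required AIHS under $T$. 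The only genuinely routine items are the resolvent-norm lower bound and the fact that $[e]$ is one-dimensional, which is immediate since $e\neq 0$.
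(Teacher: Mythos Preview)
Your proposal is correct and follows essentially the same route as the paper's own proof: shift so that $0\in\partial\sigma(T)\setminus\sigma_p(T)$, pick $\lambda_n\in\rho(T)$ with $\lambda_n\to 0$, use the resolvent blow-up together with the Uniform Boundedness Principle to produce a single nonzero $e$ with $\|(\lambda_n-T)^{-1}e\|\to\infty$ along a subsequence, and then invoke Proposition~\ref{small-norm-perturbation}. Your write-up is simply more explicit about the resolvent-norm lower bound and the final untranslation step.
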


\begin{proof}Shifting $T$ by a scalar if necessary, we may assume $0\in\partial\sigma(T)\setminus\sigma_p(T)$.  Select any $(\lambda_n)_{n=1}^\infty\subset\rho(T)$ satisfying $\lambda_n\to 0$.  As in the proof of \cite[Theorem 2.3]{PT13}, it follows that $\|(\lambda_n-T)^{-1}\|\to\infty$ and hence, by the uniform boundedness principle, that also $\|(\lambda_n-T)^{-1}e\|\to\infty$ for some nonzero $e\in X$.  Now set $h_n=(\lambda_n-T)^{-1}e$ for $n\in\mathbb{N}$ and apply Proposition \ref{small-norm-perturbation}\end{proof}

To prove the main theorem we need two earlier results.  The first of these was implicitly (but not explicitly) given within the proof of \cite[Theorem 2.7]{PT13} (see also Lemma \ref{infinitely-many-eigenvalues} below).

\begin{proposition}\label{infinitely-many-eigenvalues}Let $X$ be an infinite-dimensional complex Banach space and $T\in\mathcal{B}(X)$.  If $\sigma_p(T)$ and $\sigma_p(T^*)$ both have infinite cardinality then $T$ admits an IHS.\end{proposition}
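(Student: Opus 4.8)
\emph{Proof strategy.} The idea is to build the invariant halfspace from eigenvectors of $T$, while using eigenvectors of $T^*$ to guarantee that the closed span of those eigenvectors has infinite codimension. The whole argument hinges on one elementary observation: if $Tx=\lambda x$ and $T^*f=\mu f$ with $\lambda\neq\mu$, then $f(x)=0$, because $\lambda f(x)=f(Tx)=(T^*f)(x)=\mu f(x)$ forces $(\lambda-\mu)f(x)=0$.

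First I would extract from $\sigma_p(T)$ and $\sigma_p(T^*)$ two \emph{disjoint} infinite sets of eigenvalues, one for each operator. This needs a brief case split. If $\sigma_p(T)\cap\sigma_p(T^*)$ is infinite, split it into two disjoint infinite pieces and draw the $T$-eigenvalues $(\lambda_n)_{n=1}^\infty$, all distinct, from one piece and the $T^*$-eigenvalues $(\mu_n)_{n=1}^\infty$, all distinct, from the other. If $\sigma_p(T)\cap\sigma_p(T^*)$ is finite, then $\sigma_p(T)\setminus\sigma_p(T^*)$ and $\sigma_p(T^*)\setminus\sigma_p(T)$ are both infinite, and we take $(\lambda_n)$ from the former and $(\mu_n)$ from the latter. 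Either way we obtain distinct $\lambda_n\in\sigma_p(T)$ and distinct $\mu_m\in\sigma_p(T^*)$ with $\lambda_n\neq\mu_m$ for all $n,m$. Fix nonzero eigenvectors $x_n$ with $Tx_n=\lambda_nx_n$ and nonzero eigenfunctionals $f_m\in X^*$ with $T^*f_m=\mu_mf_m$.

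Now set $Y:=[x_n:n\in\mathbb{N}]$ and check the defining properties of an IHS: $Y$ is closed by construction, and it remains to verify invariance, infinite dimension, and infinite codimension. First, $Y$ is $T$-invariant: $T$ maps $\text{span}\{x_n\}$ into itself, so by continuity of $T$ and closedness of $Y$ we get $TY\subseteq\overline{T\,\text{span}\{x_n\}}\subseteq Y$. Second, $\dim Y=\infty$, since eigenvectors of $T$ for distinct eigenvalues are linearly independent. Third, $\codim Y=\infty$: by the opening observation $f_m(x_n)=0$ for every $m$ and $n$, so $Y\subseteq\bigcap_{m}\ker f_m$; the $f_m$ being linearly independent in $X^*$ (distinct eigenvalues again), for each $k$ the map $x\mapsto(f_1(x),\dots,f_k(x))$ sends $X$ onto $\mathbb{C}^k$ and annihilates $Y$, whence $\dim(X/Y)\geq k$. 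Thus $Y$ is an IHS under $T$.

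The linear-algebra facts used — independence of eigenvectors and of eigenfunctionals belonging to distinct eigenvalues, and the codimension of a finite intersection of kernels of independent functionals — are standard, and I would not dwell on them. The only step requiring a moment's thought is the extraction of two disjoint infinite eigenvalue sets when the two point spectra overlap arbitrarily, and the dichotomy above disposes of it; notably, no quantitative or topological information about the eigenvalues enters anywhere in the argument.
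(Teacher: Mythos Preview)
Your proof is correct and follows essentially the same route as the paper: the paper does not prove this proposition directly but refers to \cite[Theorem 2.7]{PT13} and later reproduces the argument for the real analogue (Lemma~\ref{infinitely-many-eigenvalues}), where the same identity $\lambda f(x)=f(Tx)=(T^*f)(x)=\mu f(x)$ is used to make $T^*$-eigenfunctionals annihilate $T$-eigenvectors, yielding an IHS as the closed span of the latter. Your explicit case split to extract disjoint infinite eigenvalue sets is a small but useful clarification of a step the paper leaves implicit.
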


\begin{theorem}[{\cite[Proposition 1.7]{APTT09}}]\label{dual-AIHS}Let $X$ be an infinite-dimensional (real or complex) Banach space, and let $T\in\mathcal{B}(X)$.  If $T$ admits an AIHS of defect $\leq d$, $d\in\mathbb{N}$, then so does $T^*$.\end{theorem}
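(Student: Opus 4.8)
The plan is to take $Y^\perp\subseteq X^*$, where $Y$ is an AIHS under $T$ of defect $\le d$, and show it is an AIHS under $T^*$ of defect $\le d$. First I would invoke the equivalence recalled in the introduction: since $Y$ is an AIHS under $T$ of defect $\le d$, there is an operator $F\in\mathcal{B}(X)$ with $\operatorname{rank}F\le d$ such that $(T+F)Y\subseteq Y$, i.e.\ $Y$ is an IHS under $T+F$. Working with this finite-rank perturbation rather than directly with an error subspace is what makes the duality bookkeeping clean.

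Next I would verify that $Y^\perp$ is a halfspace in $X^*$. It is automatically a closed subspace of $X^*$. Using the isomorphisms in \eqref{annihilator-dual-relations}, namely $Y^\perp\approx(X/Y)^*$ and $X^*/Y^\perp\approx Y^*$, together with the fact that the dual of an infinite-dimensional Banach space is infinite-dimensional, we get $\dim Y^\perp=\dim(X/Y)^*=\infty$ (as $X/Y$ is infinite-dimensional) and $\dim(X^*/Y^\perp)=\dim Y^*=\infty$ (as $Y$ is infinite-dimensional). Hence $Y^\perp$ has both infinite dimension and infinite codimension in $X^*$, so it is a halfspace.

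Then comes the heart of the matter: from $(T+F)Y\subseteq Y$ I would deduce $(T+F)^*Y^\perp\subseteq Y^\perp$. Indeed, if $y^*\in Y^\perp$ and $y\in Y$, then $\bigl((T+F)^*y^*\bigr)(y)=y^*\bigl((T+F)y\bigr)=0$ since $(T+F)y\in Y$ and $y^*\in Y^\perp$. Writing $(T+F)^*=T^*+F^*$ and using that a finite-rank operator and its adjoint have the same rank, so $\operatorname{rank}F^*=\operatorname{rank}F\le d$, we conclude $T^*Y^\perp\subseteq Y^\perp+F^*X^*$ with $\dim F^*X^*\le d$. Thus $Y^\perp$ is an IHS under $T^*+F^*$, equivalently an AIHS under $T^*$ of defect $\le d$.

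Finally, I would note that none of these steps use complex scalars: the annihilator, the adjoint, the identities \eqref{annihilator-dual-relations}, and the dimension/codimension count are all identical over $\mathbb{R}$, so the argument covers the real case verbatim. There is no genuine obstacle here — the proof is essentially a formal duality computation — and the only points needing any care are checking that $Y^\perp$ is genuinely a halfspace (both dimension and codimension infinite) and the elementary equality of ranks for $F$ and $F^*$.
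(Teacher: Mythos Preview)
Your argument is correct and is precisely the standard proof of this fact: pass to the annihilator $Y^\perp$, check it is a halfspace via the identifications \eqref{annihilator-dual-relations}, and observe that $(T+F)Y\subseteq Y$ dualizes to $(T^*+F^*)Y^\perp\subseteq Y^\perp$ with $\operatorname{rank}F^*=\operatorname{rank}F\le d$. The paper does not give its own proof of this statement---it simply cites \cite[Proposition 1.7]{APTT09}---so there is nothing to compare against beyond noting that your write-up matches the argument in that reference.
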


The next result represents our main tool for attacking Problem \ref{problem-norm-control}.  To prove it, we need a pair of definitions.  If $X$ and $Y$ are Banach spaces then a bounded linear operator $T:X\to Y$ is called {\bf Fredholm} just in case $\mathcal{N}(T)$ and $X/TX$ are both finite-dimensional.  If $X$ is a complex Banach space and $T\in\mathcal{B}(X)$, we define the {\bf essential spectrum} as
\[\sigma_{ess}(T):=\left\{\lambda\in\mathbb{C}:\lambda-T\text{ is not Fredholm}\right\}.\]
Note that $\sigma_{ess}(T)$ is a nonempty compact subset of $\sigma(T)$ (cf., e.g., \cite[Lemma 7.40]{AA02}).

\begin{theorem}\label{dual-norm-control}Let $X$ be a complex infinite-dimensional Banach space and $T\in\mathcal{B}(X)$.  Then there exists $d\in\mathbb{N}$ such that for every $\varepsilon>0$ there is an operator $F\in\mathcal{B}(X^*)$ of rank $\leq d$ satisfying $\|F\|<\varepsilon$, and such that $T^*+F$ admits an IHS.\end{theorem}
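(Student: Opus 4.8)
The plan is to split into cases according to the structure of the spectrum of $T$, and in each case exhibit the required small-norm finite-rank perturbation of $T^*$ with an IHS. The natural dividing line is whether $\partial\sigma(T)\setminus\sigma_p(T)$ is empty or not, since Corollary~\ref{small-2.3} already handles the nonempty case directly on $T$ (hence on $T^*$ after applying Theorem~\ref{dual-AIHS} with the perturbation transposed, noting $(f\otimes e)^*=e\otimes f$ preserves rank and norm). So assume from now on that $\partial\sigma(T)\subseteq\sigma_p(T)$.

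First I would observe that since $\sigma_{ess}(T)$ is a nonempty compact subset of $\sigma(T)$, its topological boundary $\partial\sigma_{ess}(T)$ (taken in $\mathbb{C}$) is nonempty, and every point of $\partial\sigma_{ess}(T)$ lies in $\partial\sigma(T)$; pick $\lambda_0\in\partial\sigma_{ess}(T)$ and shift so that $\lambda_0=0$. Then $0\in\sigma_p(T)$ by our case assumption, so $T$ has a nonzero eigenvector. The point of choosing $0$ on the boundary of the essential spectrum is that $0-T=-T$ is a boundary point of the Fredholm region, so by the standard punctured-neighbourhood theorem for Fredholm operators there is a sequence $\lambda_n\to 0$ with $\lambda_n-T$ Fredholm of some fixed index $k$; in particular $\lambda_n-T$ has finite-dimensional kernel and closed range of finite codimension $d':=\dim\mathcal{N}(\lambda_n-T)-k$, uniformly in $n$. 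Dualizing, $\lambda_n-T^* = (\lambda_n-T)^*$ has kernel of dimension $d'$, i.e. $\lambda_n\in\sigma_p(T^*)$ for all large $n$; this already shows $\sigma_p(T^*)$ is infinite (unless it stabilizes, which forces $0$ to be a non-isolated point with $d'=0$, a case where $\lambda_n-T$ is invertible and we can instead run the Corollary~\ref{small-2.3} argument on $T$). The heart of the matter is then to combine this with infinitude of $\sigma_p(T)$ — which holds because $0\in\sigma_p(T)$ sits on $\partial\sigma(T)$ and small perturbations $\lambda_n\to 0$ either lie in $\rho(T)$ (Corollary case) or again yield eigenvalues of $T$ by the Fredholm-index bookkeeping — so that Proposition~\ref{infinitely-many-eigenvalues} furnishes an IHS for $T$ itself, which passes to $T^*$ via Theorem~\ref{dual-AIHS}; but that route gives no norm control. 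To get norm control we instead feed the eigenvectors of $\lambda_n-T^*$ directly into Proposition~\ref{small-norm-perturbation}: choosing $h_n\in\mathcal{N}(\lambda_n-T^*)$ is no good (that gives $(\lambda_n-T^*)h_n=0$, not a fixed nonzero $e$), so the correct move is to solve $(\lambda_n-T^*)h_n=e$ for a fixed $e$ using surjectivity of $\lambda_n-T^*$ onto a complemented finite-codimensional subspace together with the uniform-boundedness argument from the proof of \cite[Theorem 2.3]{PT13} to force $\|h_n\|\to\infty$; then Proposition~\ref{small-norm-perturbation} (applied with $T^*$ in place of $T$, which is legitimate since it only requires $0\notin\sigma_p(T^*)$, and we are in the subcase where that holds) produces a rank-$1$ $F$ with $\|F\|<\varepsilon$ and $T^*+F$ having an IHS.

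The remaining subcase is $0\in\sigma_p(T^*)$, i.e. $\mathcal{N}(T^*)\neq\{0\}$, equivalently $TX$ is not dense. Here I would instead work with $T$ directly: $0\in\partial\sigma(T)\cap\sigma_p(T)$ and $0$ is on the boundary of the essential spectrum, and one can again extract $\lambda_n\to 0$ with $\lambda_n-T$ Fredholm; solving $(\lambda_n-T)h_n=e$ for fixed nonzero $e$ with $\|h_n\|\to\infty$ (this needs $0\notin\sigma_p(T)$, which fails, so one first replaces $T$ by its restriction to a complement of a finite-dimensional piece of $\mathcal{N}(T)$, or — cleaner — shifts the base point slightly along $\partial\sigma(T)$ to a nearby point of $\partial\sigma_{ess}(T)\setminus\sigma_p(T)$ if such exists) and applying Proposition~\ref{small-norm-perturbation} gives rank-$1$ $F$ on $X$ with $T+F$ admitting an IHS $Y=[h_{2n}]$; then $(T+F)^*=T^*+F^*$ with $F^*$ rank-$1$ and $\|F^*\|=\|F\|<\varepsilon$, and $Y^\perp$ is an IHS under $T^*+F^*$ by \eqref{annihilator-dual-relations} and the standard duality $(T+F)Y\subseteq Y\Rightarrow (T+F)^*Y^\perp\subseteq Y^\perp$. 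Taking $d=1$ works in all cases that reduce to Proposition~\ref{small-norm-perturbation}, but to be safe the statement allows general $d\in\mathbb{N}$, absorbing any pathological configuration (e.g. when no suitable boundary point of $\sigma_{ess}(T)$ avoids $\sigma_p(T)$ and one must peel off a finite-dimensional invariant piece, raising the defect). The main obstacle I anticipate is precisely the bookkeeping in the degenerate subcases — ensuring that after removing finite-dimensional pieces to arrange $0\notin\sigma_p(\cdot)$ one still has a genuine halfspace and the perturbation rank stays bounded by a single $d$ independent of $\varepsilon$ — whereas the norm bound itself comes essentially for free from Proposition~\ref{small-norm-perturbation} once the hypotheses $\lambda_n\to 0$, $\|h_n\|\to\infty$, $(\lambda_n-T)h_n=e$ are secured.
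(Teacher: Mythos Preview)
Your proposal has a genuine gap at the very first step after the case split. You claim that every point of $\partial\sigma_{ess}(T)$ lies in $\partial\sigma(T)$, but this is false: take $T=S\oplus\tfrac{1}{2}I$ on $\ell_2\oplus\ell_2$ with $S$ the unilateral shift. Then $\sigma(T)=\overline{\mathbb{D}}$ while $\sigma_{ess}(T)=\partial\mathbb{D}\cup\{\tfrac12\}$, so $\tfrac12\in\partial\sigma_{ess}(T)$ is an interior point of $\sigma(T)$. Consequently your inference ``$0\in\sigma_p(T)$ by our case assumption'' is unjustified, since the case assumption $\partial\sigma(T)\subseteq\sigma_p(T)$ says nothing about points of $\partial\sigma_{ess}(T)$ that lie in the interior of $\sigma(T)$. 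Worse, in the subcase where both $0\in\sigma_p(T)$ and $0\in\sigma_p(T^*)$ you suggest shifting to a nearby point of $\partial\sigma_{ess}(T)\setminus\sigma_p(T)$ ``if such exists''---but under the standing assumption $\partial\sigma(T)\subseteq\sigma_p(T)$ there is no reason any such point exists, and your fallback (restrict to a complement of a finite-dimensional piece of $\mathcal{N}(T)$) presupposes $\dim\mathcal{N}(T)<\infty$, which you have not arranged. The case analysis simply does not close, as you yourself concede in the last sentence.

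The paper's argument avoids the essential spectrum entirely at that stage. After reducing to $\partial\sigma(T)\subseteq\sigma_p(T)$ it invokes Proposition~\ref{infinitely-many-eigenvalues} to assume one of $\sigma_p(T)$, $\sigma_p(T^*)$ is \emph{finite}; after possibly passing to the adjoint, $\sigma_p(T)$ is finite, hence $\partial\sigma(T)$ is finite, hence $\sigma(T)$ itself is finite. A spectral projection reduces to $T$ quasinilpotent. Only now does one look at $m=\dim(X/\overline{TX})$ and $n=\dim\mathcal{N}(T)$ (both may be assumed finite, else there is a trivial IHS), and the key move---which is entirely absent from your outline---is to build a small-norm rank-$n$ operator $F$ mapping a basis of $\mathcal{N}(T)$ to independent vectors in a complement of $\overline{TX}$, so that $T+F$ is injective while $\sigma_{ess}(T+F)=\sigma_{ess}(T)=\{0\}$; then $0\in\partial\sigma(T+F)\setminus\sigma_p(T+F)$ and Corollary~\ref{small-2.3} finishes with one more rank. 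This is where the uniform bound $d=m+1$ comes from, not from Fredholm-index bookkeeping along a sequence.
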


\begin{proof}We follow the proof to \cite[Proposition 3.4]{PT13}.  By Proposition \ref{infinitely-many-eigenvalues} we may assume that either $T$ or $T^*$ has finitely many eigenvalues.  Observe that if $d\in\mathbb{N}$ and $T+F$ admits an IHS for some rank-$d$ operator $F\in\mathcal{B}(X)$ then $T^*+F^*$ admits an IHS by Theorem \ref{dual-AIHS}, where $\|F^*\|=\|F\|$.  In this way, if the conclusion holds for $T$ then it holds for $T^*$.  We may therefore assume without loss of generality that $\sigma_p(T)$ is finite.  We may also assume, this time via Corollary \ref{small-2.3}, that $(\partial\sigma(T))\setminus\sigma_p(T)=\emptyset$.  As $\sigma_p(T)$ is finite, this means $\sigma(T)$ is finite as well.  By a standard spectral projections argument (see \cite[Remark 2.9]{MPR12}) we pass to a shifted restriction to a finite-codimensional $T$-invariant subspace if necessary, so that $\sigma(T)$ is quasinilpotent.

Clearly, we may assume that $m:=\text{dim}(X/\overline{TX})$ and $n:=\text{dim}(\mathcal{N}(T))$ are both finite, else $T$ admits an IHS.  Let us therefore decompose $X=\overline{TX}\oplus E$ and $X=W\oplus\mathcal{N}(T)$ for an $m$-dimensional subspace $E$ and a closed, $n$-codimensional subspace $W$.  Observe that by \eqref{annihlator-relations}, $\mathcal{N}(T^*)=(\overline{TX})^\perp$ and $\overline{T^*X^*}\subseteq\mathcal{N}(T)^\perp$, and by \eqref{annihilator-dual-relations}, $(\overline{TX})^\perp\approx(X/\overline{TX})^*$ and $\mathcal{N}(T)^*\approx X^*/(\mathcal{N}(T)^\perp)$.  Hence, if $m\leq n$ we have
\begin{multline*}n^*:=\text{dim}(\mathcal{N}(T^*))=\text{dim}((\overline{TX})^\perp)=\text{dim}(X/\overline{TX})=m\leq n
\\=\text{dim}(\mathcal{N}(T))=\text{dim}(X^*/(\mathcal{N}(T)^\perp))\leq\text{dim}(X^*/\overline{T^*X^*})=:m^*.\end{multline*}
By passing to $T^*$ if necessary, we may therefore assume that $n\leq m$.  However, note that we cannot pass to $T^*$ a second time, and that now we must find an AIHS for $T$ whose associated finite-rank perturbation has norm $<\varepsilon$.

Set $d:=m+1$, and pick any $\varepsilon>0$.  Let $\{e_1,\cdots,e_m\}$ be a basis for $E$ and $\{g_1,\cdots,g_n\}$ a basis for $\mathcal{N}(T)$.  Since $X=W\oplus\mathcal{N}(T)$ we may define the operator $F\in\mathcal{B}(X)$ by the rule $Fg_i=\delta e_i$ for $i=1,\cdots,n$ and $\delta>0$, and $Fw=0$ for all $w\in W$.  By making $\delta$ smaller if necessary we may assume $\|F\|<\varepsilon/2$.  Notice that $0\notin\sigma_p(T+F)$, as if $w\in W$, $g\in\mathcal{N}(T)$, and $0=(T+F)(w+g)=Tw+Fg$ then the complementation of $\overline{TX}$ and $E$ implies that $Tw=0$ and $Fg=0$, which is true only if $w=0$ and $g=0$.

It is well-known that the essential spectrum is stable under compact perturbations (cf., e.g., \cite[Corollary 4.47]{AA02}).  In particular, $\sigma_{ess}(T+F)=\{0\}$.  It is also known that when the essential spectrum of an operator is $\{0\}$, its spectrum is at most countable (cf., e.g., \cite[Corollary 7.50]{AA02}).  So, $0\in(\partial\sigma(T+F))\setminus\sigma_p(T+F)$.  By Corollary \ref{small-2.3} we now obtain $G\in\mathcal{B}(X)$ of rank 1 and norm $<\varepsilon/2$ such that $T+F+G$ admits an IHS.\end{proof}

\begin{corollary}Let $X$ be an infinite-dimensional reflexive space, and let $T\in\mathcal{B}(X)$.  Then there exists $d\in\mathbb{N}$ such that for every $\varepsilon>0$ there is an operator $F\in\mathcal{B}(X)$ of rank $\leq d$ satisfying $\|F\|<\varepsilon$, and such that $T+F$ admits an IHS.\end{corollary}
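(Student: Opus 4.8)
The plan is to deduce this immediately from Theorem~\ref{dual-norm-control} by ``undualizing'' through reflexivity. Rather than applying that theorem to $T$, I would apply it to the adjoint $T^*\in\mathcal{B}(X^*)$, which is again a bounded operator on an infinite-dimensional complex Banach space. This furnishes a single $d\in\mathbb{N}$ with the property that for every $\varepsilon>0$ there is an operator $G\in\mathcal{B}(X^{**})$ of rank $\le d$ and norm $<\varepsilon$ for which $(T^*)^*+G=T^{**}+G$ admits an IHS in $X^{**}$.

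Next I would invoke the reflexivity of $X$. The canonical embedding $J\colon X\to X^{**}$ is then a surjective isometric isomorphism, and it intertwines $T$ with its bidual, $JT=T^{**}J$. Setting $F:=J^{-1}GJ\in\mathcal{B}(X)$, we obtain an operator of the same rank ($\le d$) and the same norm ($<\varepsilon$) as $G$, and $J(T+F)=(T^{**}+G)J$. Consequently, if $Z$ is an IHS under $T^{**}+G$, then $J^{-1}(Z)$ is a closed, infinite-dimensional, infinite-codimensional subspace of $X$ which is invariant under $T+F$; that is, $T+F$ admits an IHS. Since the integer $d$ was produced once and for all, independently of $\varepsilon$, this is exactly the assertion of the corollary.

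The argument is purely formal, so I do not anticipate any genuine obstacle: the only points to check are that $J$ is onto (which is precisely the reflexivity of $X$), that conjugation by the isometric isomorphism $J$ preserves operator norm and rank, and that $J$ carries halfspaces to halfspaces --- all routine. Essentially all of the content sits inside Theorem~\ref{dual-norm-control}; the corollary is simply its shadow on $X$, obtained by transporting the statement back along the identification of $X$ with $X^{**}$.
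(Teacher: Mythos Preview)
Your argument is correct and is exactly the intended one: the paper states this corollary without proof because it follows immediately from Theorem~\ref{dual-norm-control} applied to $T^*\in\mathcal{B}(X^*)$, after which reflexivity lets you transport the resulting IHS for $T^{**}+G$ in $X^{**}$ back to an IHS for $T+F$ in $X$ via the canonical isomorphism $J$. All of the verifications you flag (preservation of rank, norm, and halfspaces under conjugation by $J$) are indeed routine.
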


Problem \ref{problem-norm-control} remains open in its fullest generality.  However, we should mention that a weaker result was obtained but not explicitly stated in \cite[\S4]{SW16}, wherein it was shown that if $X$ is an infinite-dimensional complex Banach space then for every $\varepsilon>0$ there exists a nuclear operator $N\in\mathcal{B}(X)$ of norm $<\varepsilon$ such that $T+N$ admits an AIHS of defect $\leq 1$.  Consequently, the set
\[\left\{T\in\mathcal{B}(X):T\text{ admits an AIHS of defect }\leq 1\right\}\]
is norm-dense in $\mathcal{B}(X)$ (\cite[Corollary 4.3]{SW16}).  Let us give an analogous result in the reflexive case.

\begin{corollary}Let $X$ be a complex infinite-dimensional reflexive space.  Then the set
\[\left\{T\in\mathcal{B}(X):T\text{ admits an IHS}\right\}\]
is norm-dense in $\mathcal{B}(X)$.\end{corollary}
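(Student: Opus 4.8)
The plan is to read the statement off directly from the Corollary immediately preceding it. Fix an arbitrary $T\in\mathcal{B}(X)$ and an arbitrary $\varepsilon>0$, and write $\mathcal{D}:=\{S\in\mathcal{B}(X):S\text{ admits an IHS}\}$ for the set in question. Since $X$ is a complex infinite-dimensional reflexive space, the preceding Corollary supplies an integer $d\in\mathbb{N}$ (depending only on $T$) and, for this $\varepsilon$, an operator $F\in\mathcal{B}(X)$ of rank $\le d$ with $\|F\|<\varepsilon$ such that $T+F$ admits an IHS.

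From here there is nothing left to do. By construction $T+F\in\mathcal{D}$, while $\|(T+F)-T\|=\|F\|<\varepsilon$; hence the open $\varepsilon$-ball about $T$ meets $\mathcal{D}$. Since $T$ and $\varepsilon>0$ were arbitrary, $\mathcal{D}$ is norm-dense in $\mathcal{B}(X)$, which is the assertion.

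The ``main obstacle'' is therefore illusory: all of the substance has already been absorbed into Theorem \ref{dual-norm-control} and the reflexive corollary deduced from it (via $X^{**}=X$), and the only thing one must notice is that ``admits an IHS'' is precisely the condition defining membership in $\mathcal{D}$, so no further verification is needed. This is the reflexive counterpart of the norm-density of $\{T\in\mathcal{B}(X):T\text{ admits an AIHS of defect }\le 1\}$ recorded from \cite[Corollary 4.3]{SW16} in the discussion above; the role of reflexivity here is to let us phrase the conclusion in terms of an honest invariant halfspace rather than merely an almost-invariant one.
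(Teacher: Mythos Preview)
Your proposal is correct and is exactly the intended argument: the paper states this corollary without proof because it follows immediately from the preceding corollary by the one-line observation that $T+F\in\mathcal{D}$ with $\|(T+F)-T\|<\varepsilon$. Your write-up supplies precisely that observation, so there is nothing to add or change.
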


For the remainder of this section, let us consider a different kind of control on AIHS's, as follows.

\begin{problem}\label{problem-PE}Let $X$ be an infinite-dimensional Banach space and suppose $T\in\mathcal{B}(X)$ admits an AIHS.  Does there exists, for any finite-dimensional subspace $E$ of $X$, an AIHS $Y$ with error $\subseteq E$?\end{problem}

We give a counter-example to show that the above question has a negative answer in the general case.  Just consider a ``Donoghue operator'' $D\in\mathcal{B}(\ell_2)$ defined by the rules $De_{n+1}=2^{-n}e_n$ for $n\in\mathbb{N}$ and $De_1=0$, where $(e_n)_{n=1}^\infty$ is the canonical basis for $\ell_2$.  This is a compact, quasinilpotent operator which admits an AIHS of defect $\leq 1$ (by \cite[Corollary 3.5]{APTT09} in the complex case and by Corollary \ref{real-reflexive} below in the real case).  On the other hand, for any $N\in\mathbb{N}$ we can find an $N$-dimensional subspace $E_N=[e_1,\cdots,e_N]$ such that no AIHS under $D$ admits an error $\subseteq E_N$.  To see this, let us suppose towards a contradiction that $Y$ is an AIHS under $D$ with error $\subseteq E_N$.  Write $W_N=[e_n]_{n=N+1}^\infty$ so that there exist natural projections $P_{E_N},P_{W_N}\in\mathcal{B}(\ell_2)$ such that $P_{E_N}\ell_2=E_N$, $P_{W_N}\ell_2=W_N$, and $P_{E_N}+P_{W_N}=1$ (the identity on $\ell_2$).  Observe that $P_{W_N}Y$ is a halfspace in $W_N$, and that
\begin{multline*}(P_{W_N}D|_{W_N})(P_{W_N}Y)\subseteq(P_{W_N}D)(Y+P_{E_N}Y)=(P_{W_N}D)Y+P_{W_N}DE_N\\=(P_{W_N}D)Y\subseteq P_{W_N}(Y+E_N)=P_{W_N}Y\end{multline*}
so that $P_{W_N}Y$ is an IHS under the operator $P_{W_N}D|_{W_N}\in\mathcal{B}(W_N)$.  However, $P_{W_N}D|_{W_N}$ is itself a Donoghue operator, which contradicts the fact, proved in \cite[Theorem 4.12]{RR03}, that the only nontrivial invariant subspaces of Donoghue operators are finite-dimensional.

Nevertheless, we can still establish some partial affirmative results regarding Problem \ref{problem-PE}.


\begin{theorem} \label{main1}
Let $X$ be an infinite-dimensional complex Banach space and $T\in\B(X)$ a bounded operator such that $\sigma(T)$ is countable and $\sigma_p(T)=\emptyset$. Then for any nonzero $x\in X$ and any $\varepsilon>0$ there exists $F\in\B(X)$ with $\norm{F}<\veps$ and $\Range(F)=[x]$, and such that $T-F$ admits an IHS, say $Y$.  In this case, $TY\subseteq Y+[x]$.\end{theorem}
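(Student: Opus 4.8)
The plan is to put the prescribed vector $x$ in the role of the error vector $e$ of Proposition~\ref{small-norm-perturbation} and to manufacture the required sequences $(\lambda_n)$ and $(h_n)$ using local spectral theory. We may assume $\norm{x}=1$, since replacing $x$ by a nonzero scalar multiple changes neither $[x]$ nor the hypotheses on $T$. Because $\sigma_p(T)=\emptyset$ has empty interior, $T$ has SVEP, so the local resolvent function $f\colon\rho_T(x)\to X$ is well defined: it is analytic, satisfies $(\lambda-T)f(\lambda)=x$ for every $\lambda\in\rho_T(x)$, and agrees with $(\lambda-T)^{-1}x$ on $\rho(T)$. The local spectrum $\sigma_T(x)$ is a nonempty compact subset of $\sigma(T)$, hence countable; and since a nonempty countable compact metric space cannot be perfect, the Baire category theorem furnishes an isolated point $\lambda_0$ of $\sigma_T(x)$.

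Next I would check that $f$ is unbounded on every punctured neighbourhood of $\lambda_0$. If $f$ were bounded on some punctured disc $\{0<\abs{\lambda-\lambda_0}<\delta\}\subseteq\rho_T(x)$, then by a standard removable-singularity argument for Banach-space-valued holomorphic functions it would extend to an analytic function $\tilde f$ on $\{\abs{\lambda-\lambda_0}<\delta\}$; letting $\lambda\to\lambda_0$ in $(\lambda-T)f(\lambda)=x$ would give $(\lambda_0-T)\tilde f(\lambda_0)=x$, whence $\lambda_0\in\rho_T(x)$, a contradiction. Consequently there is a sequence $\lambda_n\to\lambda_0$ with $\lambda_n\neq\lambda_0$ and $\norm{f(\lambda_n)}\to\infty$. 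Put $h_n:=f(\lambda_n)$, so that $(\lambda_n-T)h_n=x$ and $\norm{h_n}\to\infty$.

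Now apply Proposition~\ref{small-norm-perturbation} to the operator $T-\lambda_0$, with error vector $x$: here $0\notin\sigma_p(T-\lambda_0)=\emptyset$, while $\mu_n:=\lambda_n-\lambda_0\to0$, $\norm{h_n}\to\infty$, and $(\mu_n-(T-\lambda_0))h_n=(\lambda_n-T)h_n=x$. The proposition then produces, for the given $\veps$, a rank-one operator $G\in\B(X)$ with $\norm{G}<\veps$ and $\Range(G)=[x]$ (from its explicit form $G=g\otimes x$ with $g(h_n)=1$), such that $T-\lambda_0+G$ admits an IHS $Y$, spanned by a basic subsequence of $(h_n)$. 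Since $\lambda_0 Y\subseteq Y$, this says $(T+G)Y\subseteq Y$. Setting $F:=-G$ gives $\norm{F}<\veps$, $\Range(F)=[x]$, and $(T-F)Y\subseteq Y$; and for $y\in Y$ we have $Ty=(T-F)y+Fy\in Y+[x]$, so $TY\subseteq Y+[x]$, as required.

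The only step that needs real care, and the one in which both hypotheses are genuinely used, is the passage carried out in the first two paragraphs: countability of $\sigma(T)$ is exactly what forces $\sigma_T(x)$ to possess an isolated point, and emptiness of $\sigma_p(T)$ both supplies SVEP (so that the local resolvent function exists) and permits the shift of $\lambda_0$ to the origin before Proposition~\ref{small-norm-perturbation} is invoked. Everything else — the bookkeeping with the shift and the sign of the perturbation, and the verification of $\Range(F)=[x]$ and $TY\subseteq Y+[x]$ — is routine.
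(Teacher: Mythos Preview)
Your proof is correct and follows essentially the same route as the paper's: use SVEP (from $\sigma_p(T)=\emptyset$) to obtain the local resolvent function at $x$, exploit countability of $\sigma_T(x)$ to find an isolated point, observe that this point is a non-removable singularity so the local resolvent blows up nearby, and feed the resulting data into Proposition~\ref{small-norm-perturbation}. Your write-up is in fact a bit more careful than the paper's, which simply says ``by shifting if necessary'' and ``we may now apply Proposition~\ref{small-norm-perturbation}'' without spelling out the removable-singularity argument or the sign bookkeeping for $F$.
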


\begin{proof}Note first that since $\sigma_p(T)=\emptyset$, $T$ has SVEP. Fix $e\in X$, $x\neq 0$, and consider the $f:\rho_{T}(x)\to\CC$ the local resolvent function for $T$ at $x$. The local spectrum $\sigma_{T}(x)=\CC\setminus\rho_{T}(x)$ is non-empty, compact, and, from the hypotheses, countable, therefore it contains at least one isolated point, which by shifting if necessary is zero. Since $f$ is the unique \emph{maximal} extension of $\lambda\in\rho(T)\mapsto(T-\lambda)^{-1}x$, it follows that any isolated point of  $\sigma_{T}(x)$ is a non-removable singularity of $f$.  Hence, $f$ is unbounded near the isolated points of the local spectrum, so that we can find a sequence $(\lambda_n)_{n=1}^\infty\subset\rho_{T}(x)$ satisfying $\lambda_n\to 0$ and $\norm{f(\lambda_n)}\to\infty$.  Using $h_n=f(\lambda_n)$ and $e=x$, we may now apply Proposition \ref{small-norm-perturbation}.\end{proof}

In the case when $X$ is reflexive, we also have the dual statement of the previous result, in the sense that we can control for the kernel of the perturbation.
\begin{theorem}
 Let $X$ be a  reflexive  Banach space and $T\in\B(X)$ a bounded operator such that $\sigma(T^{*})$ is countable and $\sigma_p(T^{*})=\emptyset$. Then for any hyperplane $H\subseteq X$,  and for any $\veps>0$, there exists $F\in\B(X)$ with $\norm{F}<\veps$ and $\mathcal{N}(F)=H$, such that $T-F$ admits an IHS.
\end{theorem}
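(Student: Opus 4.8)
The plan is to dualize: apply Theorem \ref{main1} to the operator $T^{*}\in\B(X^{*})$ and then transfer the resulting invariant halfspace back to $X$ using reflexivity. First I would write the hyperplane $H$ as $H=\mathcal{N}(x^{*})$ for some nonzero $x^{*}\in X^{*}$. Since $X^{*}$ is an (infinite-dimensional complex) Banach space and, by hypothesis, $\sigma(T^{*})$ is countable with $\sigma_{p}(T^{*})=\emptyset$, Theorem \ref{main1} applies to $T^{*}$ with the vector $x^{*}$: for the given $\veps>0$ it yields $G\in\B(X^{*})$ with $\norm{G}<\veps$, $\Range(G)=[x^{*}]$, and such that $T^{*}-G$ admits an IHS, say $Z\subseteq X^{*}$.

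The next step is to realize $G$ as $F^{*}$ for a suitable $F\in\B(X)$ with $\mathcal{N}(F)=H$. Since $\Range(G)=[x^{*}]$ is one-dimensional and $G\neq 0$, there is a nonzero $\psi\in X^{**}$ with $G\phi=\psi(\phi)\,x^{*}$ for all $\phi\in X^{*}$; reflexivity furnishes a nonzero $y\in X$ with $\psi=\widehat{y}$ under the canonical embedding $X\hookrightarrow X^{**}$. I would then set $Fv=x^{*}(v)\,y$, so that $\mathcal{N}(F)=\mathcal{N}(x^{*})=H$ (as $y\neq 0$), $\norm{F}=\norm{F^{*}}=\norm{G}<\veps$, and a one-line computation $(F^{*}\phi)(v)=\phi(Fv)=x^{*}(v)\phi(y)=\psi(\phi)x^{*}(v)$ gives $F^{*}=G$. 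Hence $(T-F)^{*}=T^{*}-F^{*}=T^{*}-G$ admits the IHS $Z$.

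Finally I would pull $Z$ back to $X$: put $W:=\{v\in X\mid\phi(v)=0\text{ for all }\phi\in Z\}$, a norm-closed subspace of $X$. Invariance of $Z$ under $(T-F)^{*}$ gives, for $v\in W$ and $\phi\in Z$, that $\phi\bigl((T-F)v\bigr)=\bigl((T-F)^{*}\phi\bigr)(v)=0$ since $(T-F)^{*}\phi=(T^{*}-G)\phi\in Z$; hence $(T-F)W\subseteq W$. Because $X$ is reflexive and $Z$ is norm-closed, $Z$ is weak-$*$ closed and so $W^{\perp}=Z$. Then \eqref{annihilator-dual-relations} gives $(X/W)^{*}\approx W^{\perp}=Z$ and $W^{*}\approx X^{*}/W^{\perp}=X^{*}/Z$, and since $Z$ is a halfspace in $X^{*}$ both $Z$ and $X^{*}/Z$ are infinite-dimensional, whence so are $X/W$ and $W$. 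Thus $W$ is an IHS under $T-F$, as required.

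I expect the only genuinely delicate points to be the identification $F^{*}=G$, which rests on reflexivity via $X^{**}=X$, and the very last transfer of invariance. In particular one should resist the temptation to invoke Theorem \ref{dual-AIHS} here, since that result would only return an AIHS of defect $\le 1$ rather than a genuine IHS; the pre-annihilator ${}^{\perp}Z=W$ must be used directly in order to keep the defect equal to $0$. Everything else is routine bookkeeping with annihilators and the relations \eqref{annihlator-relations}--\eqref{annihilator-dual-relations}.
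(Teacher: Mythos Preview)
Your argument is correct and follows the paper's route exactly: write $H=\mathcal{N}(x^{*})$, apply Theorem~\ref{main1} to $T^{*}$ at the vector $x^{*}$, identify the resulting rank-one $G$ as $F^{*}$ via reflexivity, and transfer the IHS back to $X$. The only cosmetic difference is that the paper performs this last transfer by citing Theorem~\ref{dual-AIHS}; that result does return a genuine IHS here (the $d=0$ case is precisely the pre-annihilator construction you spelled out), so your caution about it is unnecessary, though your explicit argument is of course equally valid.
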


\begin{proof}
Fix $H\subseteq X$ a hyperplane, and $\veps>0$. Let $x^{*}\in X^{*}$ such that $\mathcal{N}(x^{*})=H$. From Theorem \ref{main1} we can find $G\in \B(X^{*})$ with $\Range(G)=[x^{*}]$ and  $\norm{G}<\veps$, such that $T^{*}-G$ has an IHS. Then it follows from Theorem \ref{dual-AIHS} that $T^{**}-G^{*}$ has an IHS as well.  Since $X$ is reflexive, we can find $F\in\B(X)$ such that $F^{*}=G$ and $T-F$ has an IHS. Clearly, $\mathcal{N}(F)=\mathcal{N} ({x^{*}})=H$ and $\norm{F}=\norm{G}<\veps$.
\end{proof}

The next theorem shows that any rank one perturbation either has an invariant subspace, or we can find arbitrarily close perturbations that have invariant half-spaces.

  \begin{theorem}
   Let $X$ be a  Banach space and $T\in\B(X)$ a bounded operator such that $\sigma(T)$ is countable.  Then for any rank-1 operator $F$, either $T+F$ has eigenvalues, or else for any $\veps>0$ there exists a rank-1 operator $G$ with $\|F-G\|<\veps$ and such that $T+G$ admits an IHS.
  \end{theorem}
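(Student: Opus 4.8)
The plan is to reduce to the situation already handled by Theorem \ref{main1} together with a standard quasinilpotent-restriction argument. Write $F = x^*\otimes y$ for some $x^*\in X^*$ and $y\in X$. If $T+F$ has an eigenvalue we are done, so assume $\sigma_p(T+F)=\emptyset$. Since $F$ is rank one, $T+F$ and $T$ differ by a compact (indeed finite-rank) operator, so $\sigma_{ess}(T+F)=\sigma_{ess}(T)\subseteq\sigma(T)$ is countable; and since $\sigma(T)$ is countable it has empty interior, hence so does $\sigma(T+F)$ (the spectrum of $T+F$ can differ from that of $T$ only within the region bounded by $\sigma_{ess}$, and in any case a countable-essential-spectrum operator has at most countable spectrum by \cite[Corollary 7.50]{AA02} once we know $\sigma_{ess}(T+F)$ is countable with the same complementary unbounded component structure). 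So $\sigma(T+F)$ is countable and $\sigma_p(T+F)=\emptyset$.

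At this point I would simply apply Theorem \ref{main1} to the operator $T+F$ in place of $T$, with $x:=y$ (assuming $y\neq 0$; if $y=0$ then $F=0$ and there is nothing to prove since then either $T$ has eigenvalues or Theorem \ref{main1} applies directly). This produces, for every $\veps>0$, an operator $H\in\B(X)$ with $\norm{H}<\veps$ and $\Range(H)=[y]$ such that $(T+F)-H$ admits an IHS. Setting $G:=F-H$, we get that $G$ is a perturbation of the rank-1 operator $F$ with $\norm{F-G}=\norm{H}<\veps$, and $T+G=(T+F)-H$ admits an IHS. The only remaining wrinkle is that $G=F-H$ need not literally be rank one: both $F$ and $H$ have range in $[y]$, so $\Range(G)\subseteq[y]$ and hence $G$ is rank $\leq 1$; if $G$ happens to be rank zero then $T$ itself (up to the already-excluded eigenvalue case) admits an IHS by Theorem \ref{main1}, and one perturbs by an arbitrarily small rank-1 operator with range $[y]$ to restore rank exactly one while preserving the IHS — or one simply reads ``rank-1'' as ``rank $\le 1$'', which is the natural reading here.

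The main obstacle is the spectral bookkeeping in the first paragraph: one must be careful that countability of $\sigma(T)$ really does pass to $\sigma(T+F)$ under a rank-one (hence compact) perturbation. The clean way to see this, avoiding any appeal to the structure of $\sigma_{ess}$, is: a compact perturbation changes $\sigma_{ess}$ not at all, so $\sigma_{ess}(T+F)=\sigma_{ess}(T)$ is countable; by \cite[Corollary 7.50]{AA02} an operator whose essential spectrum is countable (equivalently, totally disconnected and at most countable, which a countable compact set automatically is) has at most countable full spectrum. Thus $\sigma(T+F)$ is countable, and combined with $\sigma_p(T+F)=\emptyset$ the hypotheses of Theorem \ref{main1} are met. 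Everything else is a direct invocation of results already in the paper, so this is the only place where real care is needed.
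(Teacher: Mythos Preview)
Your proposal is correct and follows essentially the same route as the paper: assume $\sigma_p(T+F)=\emptyset$, argue that $\sigma(T+F)$ is countable because $\sigma_{ess}(T+F)=\sigma_{ess}(T)$ is countable and a countable compact set has connected complement, then apply Theorem~\ref{main1} to $T+F$ with $x$ chosen so that $\Range(H)=\Range(F)$, and set $G=F-H$. The paper does exactly this; the only cosmetic difference is that it cites \cite[Theorem III.19.4]{Mu07} for the ``countable essential spectrum $\Rightarrow$ countable spectrum'' step, whereas your appeal to \cite[Corollary 7.50]{AA02} is a slight stretch (that corollary is stated only for $\sigma_{ess}=\{0\}$), and your worry about $G$ having rank exactly one is handled simply by taking $\veps<\|F\|$.
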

  
   \begin{proof}
   Note first that if $\sigma(T)$ is countable, then for any compact $K$ we have that $\sigma(T+K)$ is countable as well. Indeed, we have $\sigma_{ess}(T+K)=\sigma_{ess}(T)\subseteq\sigma(T)$, so $\sigma_{ess}(T+K)$ is countable. Now consider $A:=\CC\setminus\sigma_{ess}(T+K)$.  Since $\sigma_{ess}(T+K)$ is countable it follows that $A$ is connected.  In \cite[Theorem III.19.4]{Mu07} it was shown that, whenever $A$ is a connected and unbounded component of $\mathbb{C}\setminus\sigma_{ess}(T+K)$, the set $A\cap\sigma(T+K)$ consists of at most countably many isolated points.  In our case, $A\cap\sigma(T+K)=\sigma(T+K)\setminus\sigma_{ess}(T+K)$, and therefore
\[\sigma(T+K)=(\sigma(T+K)\setminus\sigma_{ess}(T+K))\cup\sigma_{ess}(T+K)\]
is also countable as claimed.

   Fix $F\in\B(X)$ a rank one operator and assume $\sigma_p(T+F)=\emptyset$. Let $\veps>0$ arbitrary. Since $\sigma(T)$ is countable it follows from the previous paragraph that $\sigma(T+F)$ is countable as well, and we can apply Theorem \ref{main1}. Therefore we can find a rank one bounded operator $G_{\veps}$ with $\Range(G_\veps)=\Range(F)$ and $\norm{G_{\veps}}<\veps$, such that $T+F-G_{\veps}$ has an IHS. Set $G:=F-G_{\veps}$. Then $G$ has rank-1 since $F$ and $G_{\veps}$ have the same 1-dimensional range, and $\norm{F-G}=\norm{G_{\veps}}<\veps$.
    \end{proof}

\section{AIHS's in the real setting}

In this section we extend some of the results in \cite{APTT09}, \cite{MPR13}, \cite{PT13}, \cite{SW14}, and \cite{SW16} to the real Banach space setting.  Indeed, most research into the AIHS problem has relied heavily on spectral theory, and consequently many results have only been established for complex Banach spaces.  For instance, almost all the proofs in \cite{PT13} and \cite{SW14} assume that the underlying Banach space is complex.

We remark that most of the AIHS's are constructed similarly to the following.  Let $e\in X$ be a nonzero vector and let $(\lambda_n)_{n=1}^\infty\subseteq\rho(T)$ be a sequence of complex numbers so that $h_n:=(\lambda_n-T)^{-1}e$ are well-defined.  Then any subsequence of $(h_n)_{n=1}^\infty$ spans an almost-invariant subspace $Y$ under $T$ with error $\subseteq[e]$.  To get an AIHS in the complex setting, then, it remains only to show that the $h_n$'s span a halfspace.

In principle, the above construction might sometimes work even in the real setting.  Namely, if the $\lambda_n$'s are all real, then each $\lambda_n-T$ is invertible as a real operator, and hence $h_n=(\lambda_n-T)^{-1}e$ forms a sequence of vectors in the real Banach space $X$ which span an almost-invariant subspace under the real operator $T$.

Let us therefore begin this section with the following theorem.  It has a proof almost identical to Corollary \ref{small-2.3}, and hence we omit it.

\begin{theorem}\label{boundary-real-spectrum}Let $X$ be an infinite-dimensional real Banach space and $T\in\mathcal{B}(X)$.  If $(\partial\sigma_\mathbb{R}(T))\setminus\sigma_p(T)\neq\emptyset$ then $T$ admits an AIHS $Y$ of defect $\leq 1$.  Furthermore, for any $\varepsilon>0$ we can choose $Y$ to be invariant under $T+F$ for some rank-1 operator $F\in\mathcal{B}(X)$ with $\|F\|<\varepsilon$.\end{theorem}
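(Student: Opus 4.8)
The plan is to transcribe the argument of Corollary \ref{small-2.3} into the real setting, the one genuinely new point being that the norm of the \emph{real} resolvent blows up at a real boundary point of the real spectrum; this will be bridged through the complexification. First I would use the hypothesis to pick $\mu\in(\partial\sigma_\mathbb{R}(T))\setminus\sigma_p(T)$ and replace $T$ by $T-\mu I$. This substitution changes neither the almost-invariant halfspaces of $T$, nor the fact that $T+F$ admits an IHS, nor the norm $\|F\|$, so we may assume $0\in(\partial\sigma_\mathbb{R}(T))\setminus\sigma_p(T)$; in particular $0\notin\sigma_p(T)$, i.e.\ $T$ is injective. Since $\sigma_\mathbb{R}(T)=\sigma(T)\cap\mathbb{R}$ is closed and $0$ lies on its boundary relative to $\mathbb{R}$, there is a sequence $(\lambda_n)_{n=1}^\infty\subset\rho_\mathbb{R}(T)$ of \emph{real} numbers with $\lambda_n\to 0$, so each $\lambda_n-T$ is invertible in $\mathcal{B}(X)$.

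Next I would show $\|(\lambda_n-T)^{-1}\|\to\infty$. The complexification of $(\lambda_n-T)^{-1}$ is $(\lambda_n-T_\mathbb{C})^{-1}$ (complexification is multiplicative and $(\lambda_n I)_\mathbb{C}=\lambda_n I$ for real $\lambda_n$), and the estimate \eqref{complexification-estimate} shows the operator norm of an operator on $X$ and of its complexification on $X_\mathbb{C}$ agree up to a factor of $2$. Since $\lambda_n\to 0\in\sigma_\mathbb{R}(T)\subseteq\sigma(T_\mathbb{C})$, we have $\dist(\lambda_n,\sigma(T_\mathbb{C}))\to 0$, hence $\|(\lambda_n-T_\mathbb{C})^{-1}\|\to\infty$, and therefore $\|(\lambda_n-T)^{-1}\|\to\infty$ as well. (Alternatively one can stay inside the real space: boundedness of $\|(\lambda_n-T)^{-1}\|$ along a subsequence would, by a Neumann-series argument, put a whole real neighbourhood of $0$ into $\rho_\mathbb{R}(T)$, contradicting $0\in\partial\sigma_\mathbb{R}(T)$.) By the uniform boundedness principle there is a nonzero $e\in X$ with $\sup_n\|(\lambda_n-T)^{-1}e\|=\infty$, and, after passing to a subsequence (which preserves $\lambda_n\to 0$), $\|(\lambda_n-T)^{-1}e\|\to\infty$.

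Finally, setting $h_n=(\lambda_n-T)^{-1}e$ gives $(\lambda_n-T)h_n=e$ with $e\neq 0$, $\lambda_n\to 0$, and $\|h_n\|\to\infty$, while $0\notin\sigma_p(T)$. All hypotheses of Proposition \ref{small-norm-perturbation} — which is stated for real or complex $X$ and whose scalars $\lambda_n$ are here genuinely real — are met, so for any $\varepsilon>0$ it furnishes a rank-$1$ operator $F\in\mathcal{B}(X)$ with $FX\subseteq[e]$ and $\|F\|<\varepsilon$ such that $T+F$ admits an IHS $Y$ (spanned by a basic subsequence of $(h_n)$) with $TY\subseteq Y+[e]$. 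As $[e]$ is one-dimensional, $Y$ is an AIHS under $T$ of defect $\leq 1$; reversing the initial shift yields the theorem.

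The main obstacle — indeed the only step that is not a verbatim copy of the complex proof — is the blow-up $\|(\lambda_n-T)^{-1}\|\to\infty$: in the real setting ``$\lambda-T$ invertible'' is a statement about operators on the real space, so the complex resolvent estimate $\|(\lambda-T)^{-1}\|\geq 1/\dist(\lambda,\sigma(T))$ is not directly available, and one must pass to $T_\mathbb{C}$ (or argue by hand with the Neumann series) to close the gap.
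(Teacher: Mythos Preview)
Your proposal is correct and is precisely the argument the paper has in mind: the paper omits the proof entirely, stating only that it is ``almost identical to Corollary~\ref{small-2.3},'' and your write-up is exactly that adaptation---shift to $0$, pick real $\lambda_n\in\rho_\mathbb{R}(T)$ converging to $0$, establish $\|(\lambda_n-T)^{-1}\|\to\infty$, apply uniform boundedness, and invoke Proposition~\ref{small-norm-perturbation}. You have also correctly isolated the one point that needs extra care in the real setting (the resolvent blow-up), and both of your justifications for it---via the complexification estimate~\eqref{complexification-estimate} or via a direct Neumann-series argument---are sound.
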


\begin{remark}We regard $\sigma_\mathbb{R}(T)$ as a subspace of $\mathbb{R}$ so that $\partial\sigma_\mathbb{R}(T)$ may not always coincide with $\partial\sigma(T)\cap\mathbb{R}$.  For example, if $\sigma(T)=[0,1]=\{\lambda\in\mathbb{R}:0\leq\lambda\leq 1\}$ then $\partial\sigma(T)\cap\mathbb{R}=\partial\sigma(T)=[0,1]$ whereas $\partial\sigma_\mathbb{R}(T)=\{0,1\}$.\end{remark}

Next, we recall that a sequence $(x_n)_{n=1}^\infty$ is in a Banach space $X$ is called a {\bf minimal system} whenever there exists a sequence $(x_n^*)_{n=1}^\infty$ of bounded linear functionals in $X^*$ which serve as respective coordinate functionals for the space $\text{span}(x_n)_{n=1}^\infty$.  Equivalently, $(x_n)_{n=1}^\infty$ is minimal whenever $x_N\notin[x_n]_{n=N+1}^\infty$ for every $N\in\mathbb{N}$ (cf., e.g., \cite[Lemma 2.6]{PT13}).
  In \cite[Theorem 3.2]{APTT09} and \cite[Remark 1.3]{MPR13} it was shown that in the complex setting, if there exists $e\in X$ such that $(T^ne)_{n=0}^\infty$ forms a minimal system, and $\rho_\infty(T)$ (the unique unbounded component of $\rho(T)$) contains a punctured neighborhood of zero, then $T$ admits an AIHS of defect $\leq 1$.  To prove it, the authors constructed a nonconstant entire function $F:\mathbb{C}\to\mathbb{C}$ and used Picard's Great Theorem to find a sequence of distinct complex numbers $(\lambda_n)_{n=1}^\infty$ with $F(\lambda_n)=0$ for every $n\in\mathbb{N}$.  By the Identity Theorem $|\lambda_n|\to\infty$ so that, passing to a subsequence if necessary, it may be assumed that $\lambda_n^{-1}\in\rho_\infty(T)$ for every $n\in\mathbb{N}$.  That $(T^ne)_{n=0}^\infty$ is minimal ensures that $y_n:=(\lambda_n^{-1}-T)^{-1}e$ forms a linearly independent sequence, and the fact that $F(\lambda_n)=0$ for all $n\in\mathbb{N}$ was then used to construct a linearly independent sequence $(f_k)_{k=0}^\infty\subseteq X^*$ of continuous linear functionals annihilating $(y_n)_{n=1}^\infty$.  It follows that $Y:=[y_n]_{n=1}^\infty$ is an AIHS with error $\subseteq[e]$.

This result can be adapted without compromise to the real setting.  Most of the proof follows that of the complex setting, only taking care to distinguish when values are real, complex, when vectors lie in $X$ versus $X_\mathbb{C}$, and when functionals lie in $X^*$ versus $(X_\mathbb{C})^*$.  The only significant difficulty is that we cannot use Picard's Great Theorem, because the sequence $(\lambda_n)_{n=1}^\infty$ needs to consist of real values when $X$ is a real Banach space.  Instead, we will use the following lemma.  Note that this allows us to extend the result for the complex setting by substituting the condition that $(-\veps,0)\subseteq\rho_\infty(T)$ for some $\veps>0$ in place of $\rho_\infty(T)$ containing a punctured neighborhood of zero.

\begin{lemma}\label{real-analytic}Fix a sequence $(a_n)_{n=0}^\infty$ of positive real numbers.  Then there exist a sequence $(b_n)_{n=0}^\infty$ of positive real numbers and a sequence $(c_n)_{n=0}^\infty$ of negative real numbers satisfying the following.
\begin{itemize}\item[(i)]  $\lim_{n\to\infty}\sqrt[n]{b_n}=0$;
\item[(ii)] $0<b_n\leq a_n$ for all $n\in\mathbb{N}_0$;
\item[(iii)]  $F(c_n)=0$ for all $n\in\mathbb{N}_0$, where $F:\mathbb{C}\to\mathbb{C}$ is an entire function defined by the rule $F(z)=\sum_{n=0}^\infty b_nz^n$; and
\item[(iv)]  $\displaystyle\lim_{n\to\infty}c_n=-\infty$.\end{itemize}\end{lemma}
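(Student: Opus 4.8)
The plan is to build the entire function $F$ directly with its zeros prescribed on the negative real axis, making it a product rather than trying to extract zeros after the fact; this is what replaces the appeal to Picard's Great Theorem.

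First I would fix an auxiliary sequence of negative reals $(c_n)_{n=0}^\infty$ with $c_n \to -\infty$, say $c_n = -(n+1)$ or any sequence that goes to $-\infty$ fast enough; call it $(c_n)$ for now and adjust the growth later if needed. I then form the canonical (Weierstrass) product
\[
F(z) = \prod_{n=0}^\infty \Bigl(1 - \frac{z}{c_n}\Bigr) e^{z/c_n}
\]
(or a genuine infinite product with higher-order Weierstrass factors if $\sum 1/|c_n|$ diverges — since the $c_n$ are negative, one can even just use $\prod(1-z/c_n)$ when $\sum 1/|c_n| < \infty$, which is arranged by taking $|c_n|$ to grow fast). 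This $F$ is entire, and by construction $F(c_n) = 0$ for all $n$, giving clause (iii) and, since $c_n \to -\infty$, clause (iv). The Taylor coefficients of $F$ at the origin, $F(z) = \sum_{n=0}^\infty \beta_n z^n$, are then automatically real. The key sign observation is that since every $c_n < 0$, each factor $1 - z/c_n$ has a \emph{positive} coefficient on $z$, so after multiplying out (and after choosing the Weierstrass exponential factors, which also have positive Taylor coefficients since $1/c_n < 0$ gives... wait — $e^{z/c_n}$ has coefficients $(1/c_n)^k/k!$ which alternate in sign). So I would instead avoid the exponential factors entirely: arrange $|c_n| \to \infty$ so fast that $\sum_n 1/|c_n| < \infty$, and take the plain product $F(z) = \prod_{n=0}^\infty (1 - z/c_n) = \prod_{n=0}^\infty (1 + z/|c_n|)$, which is a product of factors each with nonnegative Taylor coefficients, hence all $\beta_n \ge 0$, and in fact $\beta_n > 0$ for all $n$ (e.g. $\beta_0 = 1$, and $\beta_n$ involves a sum of products of the positive numbers $1/|c_n|$).

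Next I would arrange clauses (i) and (ii). The coefficients $b_n$ I hand back will be a rescaling $b_n = t_n \beta_n$ of the product coefficients by a sequence of positive scalars — except this changes the function and destroys the zeros. So the correct move is the reverse: I should choose the $c_n$'s (equivalently the $|c_n|$'s) \emph{as a function of the given $(a_n)$} so that the resulting $\beta_n$ already satisfy $0 < \beta_n \le a_n$ and $\sqrt[n]{\beta_n} \to 0$. Concretely, pick $|c_0| \ge |c_1| \ge \cdots$... no, I want $|c_n| \to \infty$; pick $|c_n| = R_n$ with $R_n \uparrow \infty$ to be determined. Then $\beta_n = e_n(1/R_0, 1/R_1, \dots)$, the $n$-th elementary symmetric function of the reciprocals, and one has the crude bound $\beta_n \le \bigl(\sum_{k} 1/R_k\bigr)^n / n!$ — actually $\beta_n \le \bigl(\sum_{k\ge 0} 1/R_k\bigr)^n$ trivially, and with a bit more care $\beta_n \le \frac{1}{n!}\bigl(\sum_k 1/R_k\bigr)^n$ since $e_n \le p_1^n/n!$ where $p_1 = \sum 1/R_k$. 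Choosing the $R_k$ increasing so fast that $S := \sum_k 1/R_k$ is as small as I like (say $S \le \min(1, a_0)$) already gives $\beta_n \le S^n/n! \to 0$ faster than geometrically, so $\sqrt[n]{\beta_n} \le S/\sqrt[n]{n!} \to 0$, which is (i). For (ii), I need $\beta_n \le a_n$ for every $n$; since $\beta_n \le S^n/n!$ and this tends to $0$, it is $\le a_n$ for all large $n$ automatically, and for the finitely many remaining small $n$ I shrink $S$ (i.e.\ push the $R_k$ further out) to force $S^n/n! \le a_n$ there too — only finitely many inequalities, so this is achievable. Finally set $b_n := \beta_n$ and $c_n$ as chosen.

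\textbf{The main obstacle}, as the sketch above already shows, is getting the zero-placement and the coefficient-growth conditions to coexist: once the zeros $(c_n)$ are fixed, the coefficients $b_n$ are \emph{determined}, so I cannot adjust them independently — I must verify that the single free choice (the speed at which $|c_n| \to \infty$) simultaneously delivers positivity of all $b_n$ (handled by using negative $c_n$ and the plain Weierstrass product, no exponential factors), the domination $b_n \le a_n$ (handled because the symmetric-function bound $b_n \le S^n/n!$ decays superexponentially once $S = \sum 1/|c_n|$ is small, taking care of all but finitely many $n$, then finitely many further adjustments), the root test $\sqrt[n]{b_n}\to 0$ (same bound), and $c_n \to -\infty$ (immediate). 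I would also need to double-check that the infinite product genuinely converges locally uniformly — guaranteed by $\sum 1/|c_n| < \infty$ — so that $F$ is entire and its Taylor series is the term-by-term product expansion, legitimizing the identification $F(z) = \sum b_n z^n$ with the $b_n$ I constructed. Everything else is bookkeeping.
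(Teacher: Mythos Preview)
Your product approach is genuinely different from the paper's and morally correct, but the verification of clause~(ii) has a real gap. The paper does \emph{not} prescribe the zeros in advance; instead it builds the coefficients $b_0,b_1,b_2,\ldots$ inductively two at a time, maintaining at stage $N$ a polynomial $p_N(z)=\sum_{j=0}^{2N+1}b_jz^j$ together with sign conditions $p_N(r_j-\varepsilon_j)<0<p_N(r_j+\varepsilon_j)$ on disjoint intervals on the negative axis. At each step the next even and odd coefficients are chosen small enough to preserve the existing sign changes and to create a new one further left. The Intermediate Value Theorem then locates the zeros $c_n$ in the limit. So the paper controls the coefficients directly and obtains the zeros as a consequence, whereas you control the zeros and try to recover the coefficient bounds --- a cleaner idea, but one that shifts the burden to an estimate you have not actually established.

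The specific problem is the sentence ``since $\beta_n\le S^n/n!$ and this tends to $0$, it is $\le a_n$ for all large $n$ automatically.'' The sequence $(a_n)$ is completely arbitrary; it may decay like $e^{-n^3}$, and then $S^n/n!\le a_n$ forces $S\le (a_n\,n!)^{1/n}\to 0$, so no positive $S$ works and there are infinitely many failing indices, not finitely many. A second, smaller oversight: the constant term of $\prod_k(1+z/|c_k|)$ is $1$, so $\beta_0=1$ regardless of how large you take the $|c_k|$, and nothing prevents $a_0<1$.

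Both issues are repairable within your framework. Multiply the product by a scalar $c\le a_0$ to handle $n=0$. For $n\ge 1$, replace the crude bound $e_n\le S^n/n!$ by the sharper estimate obtained by imposing $R_{k+1}\ge 2R_k$ (where $R_k=|c_k|$): evaluating $\prod_k(1+R_{n-1}/R_k)$ and splitting at $k=n$ gives
\[
e_n\bigl(1/R_0,1/R_1,\ldots\bigr)\;\le\;\frac{2^nC_0}{R_0R_1\cdots R_{n-1}},\qquad C_0=\prod_{j\ge 1}(1+2^{-j})<\infty.
\]
Now choose the $R_k$ \emph{inductively}: having fixed $R_0,\ldots,R_{n-1}$, pick $R_n\ge\max\{2R_{n-1},\;2^{n+1}C_0c/(a_{n+1}R_0\cdots R_{n-1})\}$. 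This forces $c\,e_{n+1}\le a_{n+1}$ at the next stage, and since the product $R_0\cdots R_{n-1}$ grows as fast as you like, you can match any prescribed decay of $(a_n)$. With this refinement your Weierstrass-product argument goes through and gives a more conceptual proof than the paper's.
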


\begin{proof}We thank Robert Israel for suggesting this proof.

Without loss of generality we may assume $\lim_{n\to\infty}\sqrt[n]{a_n}=0$ so that (i) will follow immediately from (ii).  For notation we write $p_n(z)=\sum_{j=0}^{2n+1}b_jz^j$, so that $p_n:\mathbb{C}\to\mathbb{C}$ and $p_n|_\mathbb{R}:\mathbb{R}\to\mathbb{R}$ are polynomials for each $n\in\mathbb{N}$

Let us define the sequences $(b_n)_{n=0}^\infty$, $(r_n)_{n=0}^\infty\subseteq(-\infty,0)$, and $(\varepsilon_n)_{n=1}^\infty\subseteq(0,\infty)$ inductively so that the following conditions are satisfied for each $N\in\mathbb{N}$:
\begin{itemize}
\item[($*$)]  $p_N(r_j-\varepsilon_j)<0<p_N(r_j+\varepsilon_j)$ for all $j\leq N$; and
\item[($**$)]  $r_{N+1}+\varepsilon_{N+1}<r_N-\varepsilon_N$.\end{itemize}

For the base case, set $b_0:=a_0$ and $b_1:=a_1$.  Observe that $p_0(x)=b_0+b_1x$ has exactly one negative real root $r_0:=-\frac{b_0}{b_1}\in(-\infty,0)$, and that there is $\varepsilon_0>0$ with $p_0(r_0-\varepsilon_0)<0<p_0(r_0+\varepsilon_0)$.

For the inductive step, suppose $b_0,\cdots,b_{2n+1}$ are defined for some $n\in\mathbb{N}$, such that ($*$) holds for that $n$.  Select $0<b_{2n+2}\leq a_{2n+2}$ so that
\[b_{2n+2}<\frac{-p_n(r_j-\varepsilon_j)}{(r_j-\varepsilon_j)^{2n+2}}\;\;\;\forall j\leq n.\]
This is possible since all the (finitely many) quantities on the right of the above inequality are positive.  Then
\[p_n(r_j-\varepsilon_j)+b_{2n+2}(r_j-\varepsilon_j)^{2n+2}<0<p_n(r_j+\varepsilon_j)+b_{2n+2}(r_j+\varepsilon_j)^{2n+2}\]
for all $j\leq n$.  Note that
\[\lim_{x\to-\infty}\left(p_n|_\mathbb{R}(x)+b_{2n+2}x^{2n+2}\right)=\infty\]
so that we can select $t_n<r_n-\varepsilon_n$ with $p_n(t_n)+b_{2n+2}t_n^{2n+2}>0$.  Similar to before, we select $0<b_{2n+3}\leq a_n$ so that
\[p_{n+1}(r_j-\varepsilon_j)<0<p_{n+1}(r_j+\varepsilon_j)\]
for all $j\leq n$, and also so that $p_{n+1}(t_n)>0$.  This immediately satisfies ($*$) for $N=n+1$.  Let $r_{n+1}$ denote the most negative real root of $p_{n+1}$.  As $r_{n+1}<t_n<r_n-\varepsilon_n$ and $\lim_{x\to-\infty}p_{n+1}|_\mathbb{R}(x)=-\infty$, we can find $\varepsilon_{n+1}>0$ small enough so that $r_{n+1}+\varepsilon_{n+1}<r_n-\varepsilon_n$ and
\[p_{n+1}(r_{n+1}-\varepsilon_{n+1})<0<p_{n+1}(r_{n+1}+\varepsilon_{n+1}).\]
This satisfies ($**$) for $N=n$, and the inductive step is complete.

So far, we have only proved (i) and (ii).  To prove (iii), observe that, due to (i), $F(z)=\sum_{n=0}^\infty b_nz^n$ is entire.  Also, as
\[F(z)=\lim_{n\to\infty}p_n(z)\;\;\;\forall\,z\in\mathbb{C}\]
we get
\[F(r_n-\varepsilon_n)\leq 0\leq F(r_n+\varepsilon_n)\]
for all $n\in\mathbb{N}_0$.  By the Intermediate Value Theorem applied to the real analytic function $F|_\mathbb{R}:\mathbb{R}\to\mathbb{R}$, for each $n\in\mathbb{N}_0$ there is $c_n\in[r_n\pm\varepsilon_n]$ with $F(c_n)=0$.  This proves (iii).

Finally, let us prove (iv).  Due to ($**$), it follows that
\[-\infty<\cdots<c_3<c_2<c_1<c_0<0.\]
As $F$ is entire and nonconstant, by the Identity Theorem we must have $|c_i|\to\infty$.\end{proof}

Before proving our next main theorem, we need a basic result about complexifications of real Banach spaces.  We include a short proof in lieu of a reference.

\begin{proposition}\label{complexification-functionals}Let $X$ be an infinite-dimensional real Banach space, and let $(x_n)_{n=1}^\infty$ be a minimal system in $X$.  Then $(x_n\oplus i0)_{n=1}^\infty$ is a minimal system in $X_\mathbb{C}$, and if $(x_n^*)_{n=1}^\infty\subseteq(X_\mathbb{C})^*$ are their respective coordinate functionals then $(\text{Re }x_n^*\circ J_\mathbb{R})_{n=1}^\infty$ are the respective coordinate functionals in $X^*$ for $(x_n)_{n=1}^\infty$, where $J_\mathbb{R}:X\to X_\mathbb{R}$ is the natural isometric injection defined by $J_\mathbb{R}x=x\oplus i0$ for $x\in X$.\end{proposition}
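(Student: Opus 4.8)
The plan is to verify the two claims — minimality of $(x_n \oplus i0)_{n=1}^\infty$ in $X_\mathbb{C}$ and the identification of the coordinate functionals — by transporting everything through the natural isometric injection $J_\mathbb{R}$ and exploiting the standard fact that a bounded real-linear functional on $X_\mathbb{C}$ is the real part of a unique bounded complex-linear one. First I would recall that if $(x_n^*)_{n=1}^\infty \subseteq (X_\mathbb{C})^*$ are coordinate functionals for $\mathrm{span}(x_n \oplus i0)$, this means precisely that $x_n^*(x_m \oplus i0) = \delta_{nm}$ for all $n,m$ and that each $x_n^*$ is bounded; conversely, to \emph{produce} such functionals I would extend the complex-linear functional defined on $\mathrm{span}(x_n \oplus i0)_{n=1}^\infty$ by $x_n \oplus i0 \mapsto \delta_{nm}$, but this extension step is exactly what the hypothesis already gives us, so the real content is the passage to $X^*$.

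The key computation is that for $x \in X$,
\[
(\mathrm{Re}\, x_n^* \circ J_\mathbb{R})(x_m) = \mathrm{Re}\big(x_n^*(x_m \oplus i0)\big) = \mathrm{Re}(\delta_{nm}) = \delta_{nm},
\]
since $\delta_{nm}$ is a real number. This shows $(\mathrm{Re}\, x_n^* \circ J_\mathbb{R})_{n=1}^\infty$ behaves correctly on the $x_m$'s. Next I would check each $g_n := \mathrm{Re}\, x_n^* \circ J_\mathbb{R}$ is a bounded real-linear functional on $X$: real-linearity is immediate since $J_\mathbb{R}$ is real-linear and $\mathrm{Re}$ is real-linear on $\mathbb{C}$ viewed as $\mathbb{R}_\mathbb{C}$; boundedness follows from $\|J_\mathbb{R}\| = 1$ (which is clear from the definition of $\|\cdot\|_{X_\mathbb{C}}$, or from the lower bound in \eqref{complexification-estimate}) together with $|\mathrm{Re}\, z| \le |z|$, giving $\|g_n\| \le \|x_n^*\|$. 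Hence $g_n \in X^*$. Finally, to establish minimality of $(x_n \oplus i0)_{n=1}^\infty$ in $X_\mathbb{C}$, I would invoke the equivalent formulation recalled just before the proposition: a sequence is a minimal system exactly when it admits coordinate functionals; since the $x_n^*$ are given to be such, minimality holds, and in the other direction, if one only assumes $(x_n)$ minimal in $X$ with coordinate functionals $(f_n) \subseteq X^*$, then $((f_n)_\mathbb{C})_{n=1}^\infty \subseteq (X_\mathbb{C})^*$ are coordinate functionals for $(x_n \oplus i0)$, since $(f_n)_\mathbb{C}(x_m \oplus i0) = f_n(x_m) \oplus i0 = \delta_{nm}$.

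The only mildly delicate point — and the step I would be most careful about — is making sure that "coordinate functionals" is being used consistently: the $x_n^*$ must be continuous on all of $X_\mathbb{C}$ (not merely on the span), and the recovered $g_n$ must likewise be continuous on all of $X$; both are handled by the norm estimate above, but one should state explicitly that continuity on the span automatically extends (by Hahn–Banach, or because the hypothesis already furnishes functionals defined on the whole space). I would also note in passing that the claimed formula pins down the $g_n$ uniquely among functionals restricting correctly, because coordinate functionals for a fixed minimal system are unique on the closed span, though uniqueness is not actually asserted in the statement and so need not be belabored.
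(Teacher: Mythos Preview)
Your proposal is correct; the computation for the coordinate functionals is identical to the paper's (the line $(\mathrm{Re}\,x_n^*\circ J_{\mathbb{R}})(x_m)=\mathrm{Re}(\delta_{nm})=\delta_{nm}$), and you add the explicit boundedness check $\|g_n\|\le\|x_n^*\|$, which the paper simply asserts.

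Where you genuinely diverge is in the proof of minimality of $(x_n\oplus i0)_{n=1}^\infty$. The paper uses the equivalent ``$x_N\notin[x_n]_{n=N+1}^\infty$'' characterization and the norm inequality \eqref{complexification-estimate} to show that a putative approximation of $x_N\oplus i0$ by complex combinations of later terms would force a real approximation of $x_N$ by later $x_n$'s, a contradiction. You instead work straight from the definition: taking real coordinate functionals $(f_n)\subseteq X^*$ for $(x_n)$ and passing to their complexifications $(f_n)_{\mathbb{C}}\in(X_{\mathbb{C}})^*$, which satisfy $(f_n)_{\mathbb{C}}(x_m\oplus i0)=f_n(x_m)=\delta_{nm}$. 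Your route is shorter and constructive (it actually exhibits coordinate functionals rather than arguing by contradiction), and it avoids any appeal to the specific form of the complexification norm; the paper's route, on the other hand, makes visible exactly how the real and imaginary parts decouple in $\|\cdot\|_{X_{\mathbb{C}}}$. One small presentational wrinkle: your sentence ``since the $x_n^*$ are given to be such, minimality holds'' reads as circular, because the $x_n^*$ appear in the statement only conditionally; the real argument is the one you give immediately after (``in the other direction''), and that should be promoted to the primary justification.
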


\begin{proof}If $(x_N\oplus i0)\in[x_n\oplus i0]_{n=N+1}^\infty$ for any $N\in\mathbb{N}$ then due to the estimate
\begin{multline*}\|(x_N\oplus i0)-\sum_{n=N+1}^\infty(a_n+ib_n)(x_n\oplus i0)\|_{X_\mathbb{C}}
\\=\|(x_N\oplus i0)-\sum_{n=N+1}^\infty(a_nx_n\oplus ib_nx_n)\|_{X_\mathbb{C}}
\geq\frac{1}{2}\|x_N-\sum_{n=N+1}^\infty a_nx_n\|_X\end{multline*}
for all $(a_n)_{n=1}^\infty,(b_n)_{n=1}^\infty\in c_{00}(\mathbb{R})$ (where the last inequality follows by \eqref{complexification-estimate}), we would then have $x_N\in[x_n]_{n=N+1}^\infty$, which cannot be true given that $(x_n)_{n=1}^\infty$ is a minimal system.  This shows that $(x_n\oplus i0)_{n=1}^\infty$ is a minimal system in $X_\mathbb{C}$.

Next, it is clear that $(\text{Re }x_n^*\circ J_\mathbb{R})_{n=1}^\infty$ is a sequence in $X^*$.  To see that they are in fact the coordinate functionals for $(x_n)_{n=1}^\infty$, notice that
\[(\text{Re }x_n^*\circ J_\mathbb{R})(x_m)=\text{Re }x_n^*(x_m\oplus i0)=\text{Re }\delta_{m,n}=\delta_{m,n},\]
for all $m,n\in\mathbb{N}$, where $\delta_{m,n}$ is the Kronecker delta.\end{proof}

Now we give one of the main results of the present section.

\begin{theorem}\label{minimal-orbit}Let $X$ be an infinite-dimensional real or complex Banach space, and let $T\in\mathcal{B}(X)$.  Suppose that some orbit $(T^ne)_{n=0}^\infty$, $e\in X$, forms a minimal system, and that
\[(-\veps,0)\subseteq\rho_\infty(T)\]
for some $\veps>0$. Then $T$ admits an AIHS with error $\subseteq[e]$.\end{theorem}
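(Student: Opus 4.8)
The plan is to run the argument of \cite[Theorem 3.2]{APTT09} and \cite[Remark 1.3]{MPR13}, but to manufacture the relevant parameters with Lemma~\ref{real-analytic} rather than with Picard's Great Theorem; this has the advantage of covering the real and complex cases at once. Let $(x_k^*)_{k=0}^\infty\subseteq X^*$ be the coordinate functionals of the minimal system $(T^ne)_{n=0}^\infty$, so that $x_k^*(T^je)=\delta_{jk}$ for all $j,k\in\mathbb{N}_0$; note these are automatically linearly independent in $X^*$, since evaluating any finite linear combination $\sum_l\alpha_l x_l^*$ at $T^je$ returns $\alpha_j$. First I would apply Lemma~\ref{real-analytic} to a sequence $(a_m)_{m=0}^\infty$ of positive reals chosen small enough that $\sum_{m=0}^\infty a_m\|x_{m+k}^*\|<\infty$ for every $k\in\mathbb{N}_0$ (for instance $a_m:=2^{-m}\big(1+\sum_{j=0}^{2m}\|x_j^*\|\big)^{-1}$ does the job). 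This produces positive reals $(b_m)_{m=0}^\infty$ with $0<b_m\le a_m$, negative reals $(c_n)_{n=0}^\infty$ with $c_n\to-\infty$ (which we may take distinct after passing to a subsequence), and an entire function $F(z)=\sum_{m=0}^\infty b_mz^m$ with $F(c_n)=0$ for every $n$.

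Since $c_n\to-\infty$, we have $c_n^{-1}\to 0^-$, so discarding finitely many indices we may assume $\lambda_n:=c_n^{-1}\in(-\varepsilon,0)\subseteq\rho_\infty(T)$ for all $n$. Each $\lambda_n$ is a real number lying in $\rho(T)$, so $\lambda_n-T$ is invertible in $\mathcal{B}(X)$ (when $X$ is real this is because $(\lambda_n-T)_\mathbb{C}=\lambda_n-T_\mathbb{C}$ is invertible in $\mathcal{B}(X_\mathbb{C})$), and hence $y_n:=(\lambda_n-T)^{-1}e\in X$ is well defined. From $(\lambda_n-T)y_n=e$ we read off $Ty_n=\lambda_ny_n-e$, so that $Y:=[y_n]_{n=1}^\infty$ satisfies $TY\subseteq Y+[e]$. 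It remains only to check that $Y$ is a halfspace, i.e.\ that $(y_n)$ is linearly independent and that $Y$ has infinite codimension.

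For the first of these I would compute the coordinates of the $y_n$. Passing to the complexification when $X$ is real, the scalar function $g_j(\mu):=(x_j^*)_\mathbb{C}\big((\mu-T_\mathbb{C})^{-1}(e\oplus i0)\big)$ is analytic on the connected set $\rho_\infty(T)$, and on $\{\,|\mu|>r(T)\,\}$ the Neumann series for the resolvent gives $g_j(\mu)=\sum_{k\ge 0}\mu^{-k-1}x_j^*(T^ke)=\mu^{-j-1}$; by the identity theorem this persists throughout $\rho_\infty(T)\setminus\{0\}$, so evaluating at $\mu=\lambda_n$ and noting that the resolvent at the real point $\lambda_n$ sends $e\oplus i0$ to $y_n\oplus i0$ yields $x_j^*(y_n)=c_n^{j+1}$ for all $j\in\mathbb{N}_0$, $n\in\mathbb{N}$. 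A Vandermonde argument (the $c_n$ being distinct and nonzero) then forces $(y_n)$ to be linearly independent, so $Y$ is infinite-dimensional. For the codimension, set $f_k:=\sum_{m=0}^\infty b_mx_{m+k}^*$ for each $k\in\mathbb{N}_0$; the series converges in $X^*$ by the choice of $(a_m)$, so $f_k\in X^*$, and $f_k(y_n)=\sum_{m\ge 0}b_mc_n^{m+k+1}=c_n^{k+1}F(c_n)=0$ for every $n$, whence $f_k\in Y^\perp$. Moreover $(f_k)_{k=0}^\infty$ is linearly independent: if $\sum_{k=0}^K\gamma_kf_k=0$, then collecting the coefficient of each $x_l^*$ and using the linear independence of $(x_l^*)$ gives $\sum_{k+m=l,\ k\le K,\ m\ge 0}\gamma_kb_m=0$ for every $l$, which forces $\gamma_0=\gamma_1=\cdots=\gamma_K=0$ inductively on $l$ (using $b_0>0$). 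Hence $Y^\perp$ is infinite-dimensional, $Y$ has infinite codimension, and so $Y$ is an AIHS under $T$ with error $\subseteq[e]$, as required.

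I expect the main obstacle to be the bookkeeping in the last step: one must choose the sequence $(a_m)$ fed into Lemma~\ref{real-analytic} so that all of the functionals $f_k$, $k\in\mathbb{N}_0$, are simultaneously bounded, and --- in the real case --- track carefully which objects live in $X$ versus $X_\mathbb{C}$ and in $X^*$ versus $(X_\mathbb{C})^*$, invoking Proposition~\ref{complexification-functionals} when passing the coordinate functionals between the two. The analytic-continuation identity $g_j(\mu)\equiv\mu^{-j-1}$ together with this real/complex dictionary are the only places where the argument departs from the complex proof; everything else is a transcription of it, now powered by Lemma~\ref{real-analytic} in place of Picard's theorem.
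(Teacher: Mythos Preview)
Your proposal is correct and follows essentially the same route as the paper: both arguments feed a suitably fast-decaying sequence into Lemma~\ref{real-analytic} to produce an entire $F$ with infinitely many negative real zeros, take $y_n=(\lambda_n-T)^{-1}e$ for the corresponding real resolvent points, and annihilate $Y=[y_n]$ with functionals built from the coordinate functionals of the orbit. The only cosmetic differences are that the paper obtains the sequence $(a_m)$ by citing \cite[Lemma~3.1]{APTT09} rather than writing one down, and defines each $f_k$ on $\Span\{T^ie\}$ and extends by Hahn--Banach, whereas you write $f_k=\sum_m b_m x_{m+k}^*$ directly as a norm-convergent series; the resulting functionals coincide.

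One small point: in your verification that $(f_k)$ is linearly independent, ``collecting the coefficient of each $x_l^*$ and using the linear independence of $(x_l^*)$'' is not quite a valid step, since the $x_l^*$ need not form a Schauder basic sequence and your $f_k$ are infinite sums. The intended argument goes through cleanly if you instead evaluate $\sum_{k\le K}\gamma_k f_k$ at $T^l e$ for $l=0,1,\dots,K$, which yields the same triangular system $\sum_{k\le l}\gamma_k b_{l-k}=0$ and hence $\gamma_0=\cdots=\gamma_K=0$ since $b_0>0$.
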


\begin{proof}We will prove the real case, as the complex case is proved via a simplified argument.

Let $J_\mathbb{R}:X\to X_\mathbb{C}$ denote the natural isometric injection defined by $J_\mathbb{R}x=x\oplus i0$ for $x\in X$.  By Proposition \ref{complexification-functionals}, $((T^ne)\oplus i0)_{n=0}^\infty$ is a minimal system in $X_\mathbb{C}$ with coordinate functionals $(x_n^*)_{n=0}^\infty\subseteq(X_\mathbb{C})^*$, and $(\text{Re }x_n^*\circ J_\mathbb{R})_{n=0}^\infty\subseteq X^*$ are the coordinate functionals for $(T^ne)_{n=0}^\infty$.  Set
\[r_k:=
\|\text{Re }x_i^*\circ J_\mathbb{R}\|_{X^*}\;\;\;\text{ for each }k\in\mathbb{N}_0.\]
By \cite[Lemma 3.1]{APTT09} there exists a sequence $(a_i)_{i=0}^\infty$ of positive real numbers such that $\sum_{i=0}^\infty a_ir_{i+k}<\infty$ for every $k\in\mathbb{N}_0$.

Observe that by Lemma \ref{real-analytic} we can find an entire nonconstant function
\[F:z\in\mathbb{C}\mapsto\sum_{n=0}^\infty c_iz^i\in\mathbb{C}\]
such that $\beta_k:=\sum_{i=0}^\infty c_ir_{i+k}<\infty$ for all $k\in\mathbb{N}_0$ and $F(\lambda_i)=0$ for some sequence $(\lambda_n)_{n=1}^\infty$ of distinct negative real numbers satisfying $\lambda_n^{-1}\to 0$.  Pass to a subsequence if necessary so that $(\lambda_n^{-1})_{n=1}^\infty\subseteq\rho_\infty(T)$.

For each $n\in\mathbb{N}$ we set $y_n:=(\lambda_n^{-1}-T)^{-1}e$.  Then the space $Y:=[y_n]_{n=1}^\infty$ is almost-invariant with error $\subseteq[e]$.  Note that due to $(T^ne)_{n=0}^\infty$ being minimal, the sequence $(y_n)_{n=1}^\infty$ is linearly independent by \cite[Remark 1.3]{MPR13}, and so $\dim(Y)=\infty$.  To complete the proof it remains only to show that $\dim(X/Y)=\infty$.  To do this, it is enough to construct a linearly independent sequence $(f_n)_{n=1}^\infty\subseteq X^*$ such that each $f_n$ vanishes on $Y$.

Fix $k\in\mathbb{N}_0$.  Since $(T^ie)_{i=0}^\infty$ is minimal, it is linearly independent. We may therefore define a real-valued linear functional $f_k$ on $\text{span}_\mathbb{R}\{T^ie\}_{i=0}^\infty$ by the rule
\[f_k(T^ie)=\left\{\begin{array}{ll}0&\text{ if }i<k,\text{ and}\\c_{i-k}&\text{ if }i\geq k.\end{array}\right.\]
For any $x\in\text{span}_\mathbb{R}\{T^ie\}_{i=0}^\infty$ we have $x=\sum_{i=0}^n\text{Re }x_i^*(J_\mathbb{R}x)T^ie$ for some $n\in\mathbb{N}_0$.  Assume without loss of generality that $n\geq k$ so that we now get
\begin{multline*}
|f_k(x)|
=|f_k\left(\sum_{i=0}^n\text{Re }x_i^*(J_\mathbb{R}x)T^ie\right)|
\\=|\sum_{i=0}^n\text{Re }x_i^*(J_\mathbb{R}x)f_k(T^ie)|
=|\sum_{i=k}^n\text{Re }x_i^*(J_\mathbb{R}x)c_{i-k}|
\\\leq\sum_{i=k}^n\|\text{Re }x_i^*\circ J_\mathbb{R}\|_{X^*}\|x\|_Xc_{i-k}
=\left(\sum_{i=k}^nr_ic_{i-k}\right)\norm{x}_X
=\beta_k\norm{x}_X
.\end{multline*}
Thus $f_k$ is bounded, which means it can be extended to $f_k\in X^*$ by Hahn-Banach.

Let us now show that the real linear functional $f_k\in X^*$ annihilates each $y_n$, and hence all of $Y$.  Indeed, for any $\mu\in\mathbb{R}$ satisfying $|\mu|>r(T)$, the Neumann series gives us the two identites
\begin{multline}\label{id-1}(f_k)_\mathbb{C}\left[(\mu-T_\mathbb{C})^{-1}(e\oplus i0)\right]
=(f_k)_\mathbb{C}\left[\sum_{i=0}^\infty\mu^{-i-1}(T_\mathbb{C})^i(e\oplus i0)\right]
\\=\sum_{i=0}^\infty\mu^{-i-1}(f_k)_\mathbb{C}((T^ie)\oplus i0)
=\sum_{i=0}^\infty\mu^{-i-1}f_k(T^ie)
\end{multline}
and
\begin{multline}\label{id-2}
\sum_{i=0}^\infty\mu^{-i-1}f_k(T^ie)
=\sum_{i=k}^\infty\mu^{-i-1}c_{i-k}
=\sum_{i=k}^\infty(\mu^{-1})^{i+1}c_{i-k}
\\=\sum_{i=0}^\infty(\mu^{-1})^{k+1+i}c_i
=(\mu^{-1})^{k+1}\sum_{i=0}^\infty(\mu^{-1})^ic_i
=\mu^{-k-1}F(\mu^{-1})
.\end{multline}
Note that the map $\mu\in\mathbb{C}\mapsto(\mu-T_\mathbb{C})^{-1}e\in X_\mathbb{C}$ is analytic on $\rho_\infty(T)$ (cf., e.g., \cite[Corollary 6.7]{AA02}), and hence the so is the map $G_k:\mathbb{C}\to\mathbb{C}$ defined by the rule
\[G_k(\mu)=(f_k)_\mathbb{C}\left[(\mu-T_\mathbb{C})^{-1}e\right].\]
As the real values of modulus $>r(T)$ have an accumulation point in $\rho_\infty(T)$, by the Identity Theorem it follows from \eqref{id-1} and \eqref{id-2} that
\[G_k(\mu)=\mu^{-1-k}F(\mu^{-1})\]
for all $\mu\in\rho_\infty(T)$.  In particular, from \eqref{id-1} and the Neumann series for real inverses we now have
\begin{multline*}f_k(y_n)
=f_k\left[(\lambda_n^{-1}-T)^{-1}e\right]
=f_k\left[\sum_{i=0}^\infty(\lambda_n^{-1})^{-i-1}T^ie\right]
=\sum_{i=0}^\infty(\lambda_n^{-1})^{-i-1}f_k(T^ie)
\\=(f_k)_\mathbb{C}\left[(\lambda_n^{-1}-T_\mathbb{C})^{-1}(e\oplus i0)\right]
=G_k(\lambda_n^{-1})=\lambda_n^{1+k}F(\lambda_n)
=0
\end{multline*}
for all $n\in\mathbb{N}_0$.\end{proof}

\begin{theorem}\label{real-3.3}Let $X$ be an infinite-dimensional (real or complex) Banach space and $T\in\mathcal{B}(X)$.  Suppose there exists an infinite chain
\[V_1\supsetneq V_2\supsetneq V_3\supsetneq\cdots\]
of closed subspaces of $X$ satisfing $TV_n\subseteq V_{n+1}$ for all $n\in\mathbb{N}$.  Then $T$ admits an AIHS of defect $\leq 1$.\end{theorem}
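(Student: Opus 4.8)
The plan is to strip away every case in which $T$ visibly has an \emph{invariant} halfspace, and to funnel the remaining case into the boundary‑spectrum results already established.

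Observe first that each $V_n$ is $T$-invariant (as $TV_n\subseteq V_{n+1}\subseteq V_n$), that $V_\infty:=\bigcap_{n=1}^\infty V_n$ is closed and $T$-invariant, and that the strictness of the chain forces $\dim(V_1/V_n)\ge n-1$, so that $\dim V_n=\infty$ for all $n$ and $\dim(X/V_\infty)=\infty$. Hence, if $\dim V_\infty=\infty$ then $V_\infty$ is an IHS and we are done; so assume $\dim V_\infty<\infty$. Passing to $X/V_\infty$ with the induced operator and the induced chain $V_n/V_\infty$ — harmless, since an AIHS of defect $\le d$ there pulls back along the quotient map to one of defect $\le d$ for $T$ on $X$ — we may assume $\bigcap_n V_n=0$. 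If now $\dim(X/V_n)=\infty$ for some $n$ then $V_n$ is an IHS; and if $\dim\mathcal{N}(T)=\infty$ then either $\mathcal{N}(T)$ is an IHS or $\dim(X/\mathcal{N}(T))<\infty$, so $T$ is finite-rank and any halfspace inside $\mathcal{N}(T)$ is an IHS; in either case we are done. So assume $\dim(X/V_n)<\infty$ for all $n$ and $\dim\mathcal{N}(T)<\infty$. Since then $\mathcal{N}(T)\cap V_n$ is a decreasing sequence of subspaces of the finite‑dimensional space $\mathcal{N}(T)$ with intersection $\mathcal{N}(T)\cap\bigcap_n V_n=0$, it stabilizes at $0$: $\mathcal{N}(T)\cap V_{n_0}=0$ for some $n_0$. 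Restricting $T$ to the $T$-invariant subspace $V_{n_0}$ and relabelling the chain — harmless, as a halfspace of $V_{n_0}$ is a halfspace of $X$ — we may finally assume $V_1=X$, $\bigcap_n V_n=0$, and $T$ injective; in particular $TX=TV_1\subseteq V_2\subsetneq X$, so $T$ is not surjective and $0\in\sigma(T)$.

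The crux is that the reduced operator has empty point spectrum. Indeed, if $x\neq 0$ and $Tx=\lambda x$ with $\lambda\neq 0$, then $x\in X=V_1$ gives $\lambda x=Tx\in V_2$, hence $x\in V_2$ (as $\lambda\neq 0$), and iterating puts $x\in\bigcap_n V_n=0$, a contradiction; since also $0\notin\sigma_p(T)$ by injectivity, $\sigma_p(T)=\emptyset$. (In the real case run the same argument inside $X_{\mathbb{C}}$ with $T_\mathbb{C}$ and the chain $(V_n)_\mathbb{C}$, noting $T_\mathbb{C}(V_n)_\mathbb{C}\subseteq(V_{n+1})_\mathbb{C}$ and $\bigcap_n(V_n)_\mathbb{C}=0$, to get $\sigma_p(T)=\sigma_p(T_\mathbb{C})=\emptyset$.) Now conclude: in the complex case $\sigma(T)$ is a nonempty proper compact subset of $\mathbb{C}$, so $\partial\sigma(T)\neq\emptyset$, and since $\sigma_p(T)=\emptyset$ we have $(\partial\sigma(T))\setminus\sigma_p(T)\neq\emptyset$, whence Corollary~\ref{small-2.3} furnishes an AIHS of defect $\le 1$ for the reduced operator; in the real case $\sigma_\mathbb{R}(T)=\sigma(T)\cap\mathbb{R}$ is a nonempty (it contains $0$) proper compact subset of $\mathbb{R}$, so $\partial\sigma_\mathbb{R}(T)\neq\emptyset$ and $(\partial\sigma_\mathbb{R}(T))\setminus\sigma_p(T)\neq\emptyset$, and Theorem~\ref{boundary-real-spectrum} does the same. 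Unwinding the restriction and the quotient then yields an AIHS of defect $\le 1$ for the original $T$ on $X$.

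The work here is almost entirely bookkeeping: one must order the reductions correctly (quotient out $V_\infty$ first, so that the trapping argument has $\bigcap_n V_n=0$ available, and kill the finite‑dimensional kernel last), and check at each step both that the chain hypothesis survives and that AIHS's lift back with their defect unchanged. The single new ingredient, and the pivot of the whole proof, is the elementary observation that a strictly decreasing $T$-forward chain with trivial intersection traps every eigenvector for a nonzero eigenvalue in the intersection, hence forces $\sigma_p(T)\subseteq\{0\}$ — which, after removing the kernel, is precisely the eigenvalue‑freeness hypothesis needed to invoke Corollary~\ref{small-2.3} or Theorem~\ref{boundary-real-spectrum}.
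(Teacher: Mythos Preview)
Your argument is correct and in fact cleaner than the paper's. Both proofs begin with the same trapping observation --- a nonzero eigenvector for a nonzero eigenvalue would be forced into $\bigcap_n V_n$ --- but you push it further: by quotienting out $V_\infty$ and then restricting to a deep enough $V_{n_0}$ to kill the (finite-dimensional) kernel, you arrange that the reduced operator is injective, and by running the trapping argument inside the complexification you conclude $\sigma_p(T)=\emptyset$ outright, not merely $\sigma_p(T)\cap\mathbb{R}\subseteq\{0\}$. This lets you finish immediately via Corollary~\ref{small-2.3} (complex case) or Theorem~\ref{boundary-real-spectrum} (real case). The paper, by contrast, works with a complement of $V_\infty$ and the compression $S=P_WT|_W$, obtains only $\sigma_p(S)\cap\mathbb{R}\subseteq\{0\}$, reduces to $\sigma_\mathbb{R}(S)=\{0\}$, and then invokes Theorem~\ref{minimal-orbit}, which in turn rests on the nontrivial entire-function construction of Lemma~\ref{real-analytic}. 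Your route bypasses that machinery entirely; the price is the mild extra bookkeeping of the quotient-then-restrict reduction and the complexification of the chain, both of which you handle correctly. (One cosmetic slip: you write ``$\dim(V_1/V_n)\ge n-1$, so that $\dim V_n=\infty$,'' but the codimension bound is not what gives $\dim V_n=\infty$; rather, it is the infinite strict chain \emph{below} $V_n$. The conclusion is of course still right.)
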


\begin{proof}We will follow the proof of \cite[Proposition 3.3]{SW14}, which gives the complex case.  So, we need only consider the case where $X$ is real.

Without loss of generality assume $V_1=X$.  As $V_\infty:=\bigcap_{n=1}^\infty V_n$ is an infinite-codimensional closed $T$-invariant subspace, we may assume it is finite-dimensional, and decompose $X=V_\infty\oplus W$ for some finite-codimensional closed subspace $W$ of $X$.  Write $P_W\in\mathcal{B}(X)$ for the bounded linear projection onto $W$ along $V_\infty$, and define $S:=P_WT|_W\in\mathcal{B}(W)$.

By \cite[Lemma 3.2]{SW14}, $W_n:=P_WV_n=W\cap V_n$ for each $n\in\mathbb{N}$.  Furthermore, these spaces have the following obvious properties.
\begin{itemize}\item[(a)]  $W_{n+1}\subseteq W_n$ for each $n\in\mathbb{N}$;
\item[(b)]  $\bigcap_{n=1}^\infty W_n=W\cap(\bigcap_{n=1}^\infty V_n)=\{0\}$; and
\item[(c)]  $SW_n=P_WT(W\cap V_n)\subseteq W\cap V_{n+1}=W_{n+1}$ for each $n\in\mathbb{N}$.\end{itemize}
From this we can deduce that $\sigma_p(S)\cap\mathbb{R}\subseteq\{0\}$, as if $\lambda$ were a nonzero real eigenvalue of $S$ with corresponding eigenvalue $w\in W=W_1$, we would then have $w=\lambda^{-n}S^nw\in W_n$ for every $n\in\mathbb{N}$, contradicting (b) above.  Also, by the $T$-invariance of $V_\infty$, every AIHS $Y$ under $S$ induces an AIHS $Y+V_\infty$ under $T$, with no larger defect.  Notice also that $0\in\sigma(S)$ and thus, by Theorem \ref{boundary-real-spectrum}, we may assume that $\sigma_\mathbb{R}(S)=\{0\}$.

We claim that there exists a minimal orbit under $S$, which is sufficient by Theorem \ref{minimal-orbit}.  Indeed, by \cite[Lemma 2.11]{SW14} we may assume that $\mathcal{N}(S^n)$ is finite-dimensional for each $n\in\mathbb{N}$.  As any proper closed subspace of a normed linear space is nowhere dense, the Baire Category Theorem ensures that the set $A:=W\setminus\bigcup_{n=1}^\infty\mathcal{N}(S^n)$ is nonempty.  Select any $e\in A$ and fix some $n\in\mathbb{N}$.  By property (b) we can find minimal $k_n\in\mathbb{N}$ such that $S^ne\notin W_{k_n}$.  By minimality, $S^ne\in W_{k_n-1}$, and properties (a) and (c) therefore force $S^j(S^ne)\in W_{k_n}$ for each $j\in\mathbb{N}$.  As $W_{k_n}$ is closed, this means $S^ne\notin[S^je]_{j=n+1}^\infty$, and we are done.\end{proof}

\begin{lemma}\label{RDR}Let $X$ be a real Banach space and $T\in\mathcal{B}(X)$.  Then
\[\sigma_p(T^*)\cap\mathbb{R}=\left\{\lambda\in\mathbb{R}:\overline{(\lambda-T)X}\neq X\right\}.\]\end{lemma}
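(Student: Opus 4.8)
The plan is to work directly in the real Banach space $X^*$, reducing the membership $\lambda\in\sigma_p(T^*)$ (which, by our conventions, means $\lambda\in\sigma_p((T^*)_\mathbb{C})$) to an honest eigenvalue statement about the real operator $T^*$, and then invoke the annihilator identity \eqref{annihlator-relations} together with Hahn--Banach.

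First I would record the elementary observation that, for any operator $S\in\mathcal{B}(Z)$ on a real Banach space $Z$ and any \emph{real} scalar $\lambda$, one has $\lambda\in\sigma_p(S_\mathbb{C})$ if and only if $\mathcal{N}(\lambda-S)\neq\{0\}$ in $Z$. Indeed, a real eigenvector $z$ for $\lambda$ yields the complex eigenvector $z\oplus i0$; conversely, if $S_\mathbb{C}(z_1\oplus iz_2)=\lambda(z_1\oplus iz_2)$ with $(z_1,z_2)\neq(0,0)$, then, since $\lambda$ is real, this separates into $Sz_1=\lambda z_1$ and $Sz_2=\lambda z_2$, and at least one of $z_1,z_2$ is a nonzero real eigenvector. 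Applying this with $Z=X^*$ and $S=T^*$ shows that, for $\lambda\in\mathbb{R}$, $\lambda\in\sigma_p(T^*)$ if and only if $\mathcal{N}(\lambda-T^*)\neq\{0\}$.

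Second, since $\lambda-T^*=(\lambda-T)^*$, the annihilator identity \eqref{annihlator-relations}---which is purely a Hahn--Banach duality statement and therefore valid over $\mathbb{R}$---gives $\mathcal{N}(\lambda-T^*)=(\overline{(\lambda-T)X})^{\perp}$. Again by Hahn--Banach, a closed subspace of $X$ has nonzero annihilator in $X^*$ precisely when it is proper. Hence $\mathcal{N}(\lambda-T^*)\neq\{0\}$ if and only if $\overline{(\lambda-T)X}\neq X$, and chaining the two equivalences yields the claimed identity.

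The only point requiring a little care is the first step: making sure the complexification conventions are handled so that ``eigenvalue of $T^*$'' (interpreted via $(T^*)_\mathbb{C}$) genuinely coincides, for real $\lambda$, with the existence of an eigenvector living in $X^*$ itself; everything afterwards is standard duality and presents no obstacle. An essentially equivalent alternative would be to apply \eqref{annihlator-relations} to the complex operator $\lambda-T_\mathbb{C}$ on $X_\mathbb{C}$ and then use the norm estimate \eqref{complexification-estimate} to verify that $\overline{(\lambda-T_\mathbb{C})X_\mathbb{C}}=(\overline{(\lambda-T)X})_\mathbb{C}$ and that a complexified real subspace fills $X_\mathbb{C}$ exactly when the original fills $X$.
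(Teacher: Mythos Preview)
Your argument is correct. Your main route differs slightly from the paper's: you first reduce, via the real/imaginary split of an eigenvector, the statement $\lambda\in\sigma_p((T^*)_\mathbb{C})$ for real $\lambda$ to the purely real condition $\mathcal{N}(\lambda-T^*)\neq\{0\}$, and then apply the real annihilator identity \eqref{annihlator-relations} together with Hahn--Banach. The paper instead stays on the complex side throughout: it quotes the complex identity $\sigma_p((T_\mathbb{C})^*)=\{\lambda:\overline{(\lambda-T_\mathbb{C})X_\mathbb{C}}\neq X_\mathbb{C}\}$ and then checks that $\overline{(\lambda-T_\mathbb{C})X_\mathbb{C}}=\overline{(\lambda-T)X}\oplus i\,\overline{(\lambda-T)X}$ for real $\lambda$, which is exactly the ``essentially equivalent alternative'' you sketch at the end. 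Your direct route has the mild advantage of not needing the identification $(X_\mathbb{C})^*\cong(X^*)_\mathbb{C}$ implicit in the paper's step $\sigma_p(T^*)=\sigma_p((T_\mathbb{C})^*)$; the paper's route has the advantage of importing the complex result wholesale rather than reproving it.
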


\begin{proof}Note that
\[\overline{S_\mathbb{C}X_\mathbb{C}}=\overline{SX\oplus iSX}=\overline{SX}\oplus i\overline{SX}\]
for any operator $S\in\mathcal{B}(X)$, and $(\lambda-T)_\mathbb{C}=\lambda-T_\mathbb{C}$ for any $\lambda\in\mathbb{R}$.  Hence, we can apply the well-known identity
\[\sigma_p(T^*)=\sigma_p((T_\mathbb{C})^*)=\left\{\lambda\in\mathbb{C}:\overline{(\lambda-T_\mathbb{C})X_\mathbb{C}}\neq X_\mathbb{C}\right\}\]
(cf., e.g., \cite[Theorem 6.19]{AA02}).\end{proof}

\begin{proposition}\label{spectrum-restricted-operator}Let $X$ be a (real or complex) Banach space, and suppose $W$ is a $T$-invariant finite-codimensional subspace of $X$ for some $T\in\mathcal{B}(X)$.  Decompose $X=W\oplus E$ for some finite-dimensional subspace $E$ of $X$, and let $P_E\in\mathcal{B}(X)$ denote the bounded linear projection onto $E$ along with $\mathcal{N}(P_E)=W$.  Then the following assertions all hold.
\begin{itemize}\item[(i)]  $\sigma(T|_W)\subseteq\sigma(T)$,
\item[(ii)]  $\sigma(T)\setminus\sigma(T|_W)\subseteq\sigma_p(P_ET)$, and
\item[(iii)]  $\sigma(T)\setminus\sigma(T|_W)$ has finite cardinality.\end{itemize}\end{proposition}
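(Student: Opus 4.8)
The plan is to reduce to the complex case and then read off everything from the block upper-triangular matrix of $T$ relative to $X=W\oplus E$. If $X$ is real, then $W_{\mathbb{C}}=W\oplus iW$ is a $T_{\mathbb{C}}$-invariant subspace of $X_{\mathbb{C}}$ of finite codimension, $X_{\mathbb{C}}=W_{\mathbb{C}}\oplus E_{\mathbb{C}}$, and one checks routinely that $(P_E)_{\mathbb{C}}=P_{E_{\mathbb{C}}}$, $(T|_W)_{\mathbb{C}}=T_{\mathbb{C}}|_{W_{\mathbb{C}}}$, and $(P_ET)_{\mathbb{C}}=P_{E_{\mathbb{C}}}T_{\mathbb{C}}$; since for real operators $\sigma(\cdot)$ and $\sigma_p(\cdot)$ are by definition the corresponding sets for the complexification, the real case of all three assertions follows verbatim from the complex one. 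So assume $X$ is complex and write $T=\bigl(\begin{smallmatrix}A & B_0\\ 0 & D\end{smallmatrix}\bigr)$ with respect to $W\oplus E$, where $A=T|_W\in\mathcal{B}(W)$, $B_0=P_WT|_E\colon E\to W$, $D=P_ET|_E\in\mathcal{B}(E)$, $\dim E<\infty$, and the vanishing lower-left block records $TW\subseteq W$. Note also that $P_ET=\bigl(\begin{smallmatrix}0 & 0\\ 0 & D\end{smallmatrix}\bigr)$, since $P_ETw=P_EAw=0$ for $w\in W$ and $P_ETe=De$ for $e\in E$.

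Parts (ii) and (iii) are then the easy direction. If $\mu-A$ and $\mu-D$ are both invertible, the block matrix $\bigl(\begin{smallmatrix}(\mu-A)^{-1} & (\mu-A)^{-1}B_0(\mu-D)^{-1}\\ 0 & (\mu-D)^{-1}\end{smallmatrix}\bigr)$ is a two-sided inverse of $\mu-T$, so $\sigma(T)\subseteq\sigma(A)\cup\sigma(D)$, i.e. $\sigma(T)\setminus\sigma(T|_W)\subseteq\sigma(D)$. Since $D$ acts on a finite-dimensional space, $\sigma(D)=\sigma_p(D)$ is finite, which gives (iii); and each $\mu\in\sigma_p(D)$ is an eigenvalue of $P_ET$ with eigenvector in $E$, so $\sigma(D)\subseteq\sigma_p(P_ET)$, which upgrades the inclusion to $\sigma(T)\setminus\sigma(T|_W)\subseteq\sigma_p(P_ET)$ and gives (ii).

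The substance is (i): $\sigma(T|_W)\subseteq\sigma(T)$, equivalently $\rho(T)\subseteq\rho(T|_W)$. Fix $\lambda\in\rho(T)$ and write $(\lambda-T)^{-1}=\bigl(\begin{smallmatrix}R_{11}(\lambda) & R_{12}(\lambda)\\ R_{21}(\lambda) & R_{22}(\lambda)\end{smallmatrix}\bigr)$; each block is an analytic operator-valued function of $\lambda$ on $\rho(T)$. The lower-left entry of $(\lambda-T)(\lambda-T)^{-1}=I$ reads $(\lambda-D)R_{21}(\lambda)=0$, so $R_{21}(\lambda)=0$ for every $\lambda\in\rho(T)\setminus\sigma(D)$. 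Because $\sigma(D)$ is finite, $\rho(T)\setminus\sigma(D)$ meets every connected component of $\rho(T)$ in a nonempty open set, so the identity theorem forces $R_{21}\equiv 0$ on all of $\rho(T)$. Substituting $R_{21}=0$ into the upper-left entries of $(\lambda-T)(\lambda-T)^{-1}=I$ and of $(\lambda-T)^{-1}(\lambda-T)=I$ yields $(\lambda-A)R_{11}(\lambda)=I_W=R_{11}(\lambda)(\lambda-A)$, so $\lambda-T|_W$ is invertible and $\lambda\in\rho(T|_W)$, as required.

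The only real obstacle is this last inclusion. Injectivity of $\lambda-T|_W$ for $\lambda\in\rho(T)$ is automatic, being a restriction of the injective operator $\lambda-T$; but surjectivity onto $W$ is not automatic, because a preimage in $X$ of a given $w_0\in W$ under $\lambda-T$ has $E$-component constrained only to lie in $\mathcal{N}(\lambda-D)$, which is nonzero precisely at the finitely many points of $\sigma(D)$. The point of the argument above is that the analyticity of $\lambda\mapsto(\lambda-T)^{-1}$ lets one promote ``$R_{21}$ vanishes off a finite set'' to ``$R_{21}$ vanishes identically,'' thereby disposing of these exceptional points; I expect this to be the one place where a purely algebraic manipulation is insufficient and a little complex analysis is genuinely needed.
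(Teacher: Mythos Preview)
Your argument is correct, but the route you take for (i) differs from the paper's and your closing expectation is mistaken. The paper handles (i) by a pure codimension count: for $\lambda\in\rho(T)$, the bijection $\lambda-T$ carries $W$ into $W$ and $X$ onto $X=(\lambda-T)W+(\lambda-T)E\subseteq W+(\lambda-T)E$, so $\dim(X/(\lambda-T)W)\le\dim E=\dim(X/W)\le\dim(X/(\lambda-T)W)$; since $(\lambda-T)W$ is closed (image of a closed subspace under a linear homeomorphism) and contained in $W$ with equal finite codimension, $(\lambda-T)W=W$, whence $(\lambda-T)^{-1}W=W$ and $(\lambda-T)^{-1}|_W$ inverts $\lambda-T|_W$. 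No analyticity is used. Your identity-theorem maneuver works, but it is heavier machinery than the problem needs; the ``exceptional'' $\lambda\in\sigma(D)\cap\rho(T)$ are disposed of algebraically, not analytically.

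For (ii) and (iii) your block-triangular approach is cleaner than the paper's. The paper argues (ii) by a case split on whether $\lambda-T$ is surjective: if so, an eigenvector $w+e$ of $T$ must have $e\neq 0$ (since $\lambda\notin\sigma(T|_W)$) and $P_E(\lambda-T)(w+e)=(\lambda-P_ET)e=0$; if not, one shows $P_E(\lambda-T)|_E$ cannot be onto $E$, hence has nontrivial kernel by rank--nullity. Your single inclusion $\sigma(T)\setminus\sigma(A)\subseteq\sigma(D)=\sigma_p(D)\subseteq\sigma_p(P_ET)$ via the explicit block inverse replaces that case analysis and yields (iii) immediately.
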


\begin{proof}Thanks go to Robert Israel for suggesting the following proof of (i).

Note that if $X$ is a real Banach space then $W_\mathbb{C}\subseteq X_\mathbb{C}$ is invariant under $T_\mathbb{C}$ and $S_\mathbb{C}=T_\mathbb{C}|_{W_\mathbb{C}}$.  Thus we can assume $X$ is complex.

(i)  Select $\lambda\in\rho(T)$ and notice that
\[X=(\lambda-T)X=(\lambda-T)W+(\lambda-T)E\subseteq W+(\lambda-T)E\subseteq X\]
so that
\[\text{dim}(X/W)\leq\text{dim}(X/(\lambda-T)W)\leq\text{dim}((\lambda-T)E)\leq\text{dim}(E)=\text{dim}(X/W).\]
In particular, $(\lambda-T)W$ is a closed subspace of $W$ with the same codimension as $W$ in $X$, whence it follows that $(\lambda-T)W=W$.  Hence \[(\lambda-T)^{-1}W=(\lambda-T)^{-1}(\lambda-T)W=W\]
so that $W$ is invariant under $(\lambda-T)^{-1}$.  It follows that $(\lambda-T)^{-1}|_W\in\mathcal{B}(W)$ is an inverse for $\lambda-T|_W$.

(ii)  Consider the case where $\lambda-T$ is surjective, an recall (cf., e.g., \cite[Theorem 6.20]{AA02}) that whenever $\lambda-T$ is surjective but not invertible, $\lambda\in\sigma_p(T)$.  So, for some $w\in W$ and $e\in E$ with $w+e\neq 0$ we have $(\lambda-T)(w+e)=0$, and hence, by $T$-invariance of $W$,
\[0=P_E(\lambda-T)(w+e)=(\lambda-P_ET)e.\]
Since $\lambda\notin\sigma(T|_W)$ we must have $e\neq 0$.  Hence, $\lambda\in\sigma_p(P_ET)$.

Thus we can assume $\lambda-T$ is not surjective.  In this case, we cannot have $P_E(\lambda-T)X=E$, since that would give us
\begin{multline*}(\lambda-T)X=(1-P_E)(\lambda-T)X+E
\\\supseteq(1-P_E)(\lambda-T)W+E=(1-P_E)W+E=W+E=X,\end{multline*}
contradicting the fact that $\lambda-T$ is not surjective.  Thus, $P_E(\lambda-T)|_E$ has rank $<\text{dim}(E)$.  By the rank-nullity theorem this means $\lambda\in\sigma_p(P_ET)$.

(iii)  Since $P_ET$ is finite-rank, $\sigma_p(P_ET)$ has finite cardinality.  Now apply (ii).\end{proof}

Observe the following corollary to \cite[Lemma 3.4]{SW14}.

\begin{lemma}\label{real-3.4}Let $X$ be an infinite-dimensional real Banach space and $T\in\mathcal{B}(X)$.  For $\mu\in\mathbb{R}$ we define
\[V=\overline{(\mu-T)X}\;\;\;\text{ and }\;\;\;S=T|_V\in\mathcal{B}(V).\]
Then
\[\sigma_p(S^*)\subseteq\sigma_p(T^*).\]\end{lemma}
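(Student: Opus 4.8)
The plan is to obtain Lemma \ref{real-3.4} as a corollary of its complex counterpart \cite[Lemma 3.4]{SW14} by passing to complexifications. The first point to check is that $S=T|_V$ is actually well defined, i.e.\ that $V$ is $T$-invariant: if $y=(\mu-T)x$ then $Ty=T(\mu-T)x=(\mu-T)Tx\in(\mu-T)X$, so $(\mu-T)X$, and hence its closure $V$, is invariant under $T$, and $S\in\mathcal{B}(V)$ makes sense.

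Next I would record the compatibility of the three operations at play: complexification, restriction to $V$, and passage to the adjoint. Because $\mu\in\mathbb{R}$ we have $(\mu-T)_\mathbb{C}=\mu-T_\mathbb{C}$, and a direct computation shows $(\mu-T_\mathbb{C})X_\mathbb{C}=(\mu-T)X\oplus i(\mu-T)X$; taking closures and using the identity $\overline{RX\oplus iRX}=\overline{RX}\oplus i\overline{RX}$ already invoked in the proof of Lemma \ref{RDR}, we get $V_\mathbb{C}=\overline{(\mu-T_\mathbb{C})X_\mathbb{C}}$. Similarly $S_\mathbb{C}=(T|_V)_\mathbb{C}=T_\mathbb{C}|_{V_\mathbb{C}}$, and under the canonical isometric identifications $(V^*)_\mathbb{C}\cong(V_\mathbb{C})^*$ and $(X^*)_\mathbb{C}\cong(X_\mathbb{C})^*$ (see \cite[\S1.1]{AA02}) the operators $(S^*)_\mathbb{C}$ and $(S_\mathbb{C})^*$ coincide, as do $(T^*)_\mathbb{C}$ and $(T_\mathbb{C})^*$. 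In view of the definition of the point spectrum in the real setting, this gives $\sigma_p(S^*)=\sigma_p((S_\mathbb{C})^*)$ and $\sigma_p(T^*)=\sigma_p((T_\mathbb{C})^*)$.

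With these identifications in hand, I would apply \cite[Lemma 3.4]{SW14} to the complex Banach space $X_\mathbb{C}$, the operator $T_\mathbb{C}\in\mathcal{B}(X_\mathbb{C})$, and the scalar $\mu\in\mathbb{R}\subseteq\mathbb{C}$. Since $V_\mathbb{C}=\overline{(\mu-T_\mathbb{C})X_\mathbb{C}}$ and $S_\mathbb{C}=T_\mathbb{C}|_{V_\mathbb{C}}$, that lemma yields $\sigma_p((S_\mathbb{C})^*)\subseteq\sigma_p((T_\mathbb{C})^*)$, and combining with the previous paragraph we conclude $\sigma_p(S^*)\subseteq\sigma_p(T^*)$.

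This argument is routine; the only place that needs a little care is the second step, namely checking that complexification commutes with the restriction to $V$ and with taking adjoints, and that the real-setting convention $\sigma_p(R^*):=\sigma_p((R^*)_\mathbb{C})$ agrees with $\sigma_p((R_\mathbb{C})^*)$. Once these bookkeeping points are stated cleanly, the cited complex lemma applies verbatim, so there is no genuine obstacle.
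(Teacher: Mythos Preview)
Your argument is correct and follows exactly the same route as the paper: pass to the complexification, identify $V_\mathbb{C}=\overline{(\mu-T_\mathbb{C})X_\mathbb{C}}$ and $S_\mathbb{C}=T_\mathbb{C}|_{V_\mathbb{C}}$, and then invoke \cite[Lemma 3.4]{SW14}. The paper's proof is terser, leaving the bookkeeping you spell out (invariance of $V$, compatibility of complexification with restriction and adjoints) implicit.
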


\begin{proof}Observe that
\[(\mu-T_\mathbb{C})X_\mathbb{C}=(\mu-T)X\oplus i(\mu-T)X\]
and hence
\[\overline{(\mu-T_\mathbb{C})X_\mathbb{C}}=\overline{(\mu-T)X}\oplus i\overline{(\mu-T)X}.\]
Now we apply \cite[Lemma 3.4]{SW14} to get our result.\end{proof}

\begin{remark}\label{real-3.4-polynomial}By successively applying Lemma \ref{real-3.4}, we have the same conclusion $\sigma_p(S^*)\subseteq\sigma_p(T^*)$ whenever $V=\overline{p(T)X}$ and $S=T|_V\in\mathcal{B}(V)$ for some polynomial $p(t)\in\mathbb{R}[t]$ with real coefficients.\end{remark}

\begin{theorem}\label{real-3.5}Let $X$ be an infinite-dimensional real Banach space, and suppose $T\in\mathcal{B}(X)$ satisfies either
\begin{equation}\label{real-spectrum}\sigma(T)\subseteq\mathbb{R}\;\;\;\text{ or }\end{equation}
\begin{equation}\label{real-cardinality}\sigma_\mathbb{R}(T)\text{ has infinite cardinality}.\end{equation}
Then at least one of the following conditions holds.
\begin{itemize}\item[(i)]  $T$ admits an AIHS of defect $\leq 1$; or
\item[(ii)]  for any finite subset $A$ of $\mathbb{R}$ there exists a finite-codimensional closed $T$-invariant subspace $W$ of $X$ such that the operator $S=T|_W\in\mathcal{B}(W)$ satisfies
\[\sigma_p(S^*)\subseteq\sigma_p(T^*)\setminus A\;\;\;\text{ and }\;\;\;\sigma(S)\subseteq\sigma(T).\]
Furthermore, the set $\sigma(T)\setminus\sigma(S)$ has finite cardinality, and whichever conditions \eqref{real-spectrum} and \eqref{real-cardinality} hold for $T$ also hold for $S$.\end{itemize}\end{theorem}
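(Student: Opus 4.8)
The plan is to prove the dichotomy in contrapositive form: assume $T$ admits no AIHS of defect $\leq 1$, fix an arbitrary finite $A\subseteq\mathbb{R}$, and construct the subspace $W$ required in (ii). First I reduce to the case $A\subseteq\sigma_p(T^*)\cap\mathbb{R}$: an element $\mu\in A\setminus\sigma_p(T^*)$ never needs to be removed, and Lemma~\ref{real-3.4} together with Remark~\ref{real-3.4-polynomial} guarantees that every restriction we perform can only shrink $\sigma_p$ of the adjoint, so discarding such $\mu$ costs nothing. The core idea is, for a given $\mu\in A$, to replace $X$ by the closure of an iterated range $\overline{(\mu-T)^nX}$; each such subspace is closed and $T$-invariant because $(\mu-T)^n$ commutes with $T$, and the elementary identity $\overline{R\,\overline{M}}=\overline{RM}$ (valid for any bounded $R$ and any subspace $M$) gives $\overline{(\mu-T)^{n+1}X}=\overline{(\mu-T)\,\overline{(\mu-T)^nX}}$.

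I then need two facts about the descending chain $X\supseteq\overline{(\mu-T)X}\supseteq\overline{(\mu-T)^2X}\supseteq\cdots$. First, every term is finite-codimensional in $X$: if not, take the least $n$ for which $V:=\overline{(\mu-T)^nX}$ has infinite codimension, so $\overline{(\mu-T)^{n-1}X}$ is finite-codimensional (hence infinite-dimensional); if $V$ is also infinite-dimensional it is a closed $T$-invariant halfspace, i.e.\ an IHS for $T$, contradicting our assumption, while if $V$ is finite-dimensional then $(\mu-T)$ has finite-dimensional range on $\overline{(\mu-T)^{n-1}X}$, so $\mathcal{N}(\mu-T)\cap\overline{(\mu-T)^{n-1}X}$ is finite-codimensional in $X$, hence infinite-dimensional, and $T$ acts on it as $\mu$, so passing to a basic sequence in it and taking the closed span of a co-infinite subsequence again yields an IHS for $T$---a contradiction. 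Second, the chain stabilizes: one equality $\overline{(\mu-T)^{m+1}X}=\overline{(\mu-T)^mX}$ forces equality at all larger indices, so if the chain never stabilizes it is strictly decreasing at every step, and since $(\mu-T)\overline{(\mu-T)^nX}\subseteq\overline{(\mu-T)^{n+1}X}$, Theorem~\ref{real-3.3} applied to the operator $\mu-T$ yields an AIHS of defect $\leq 1$ for $\mu-T$, hence for $T$, which is impossible. So the chain stabilizes at some $N_\mu$, and on $W_\mu:=\overline{(\mu-T)^{N_\mu}X}$ the operator $\mu-T$ has dense range, i.e.\ $\mu\notin\sigma_p((T|_{W_\mu})^*)$ by Lemma~\ref{RDR}.

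The step I expect to be the main obstacle is combining these restrictions over the finitely many points of $A$ without reintroducing eigenvalues---a plain intersection of the $W_\mu$ need not keep $\mu-T$ of dense range. Instead I would build $W$ as a single iterated polynomial range by applying the construction in succession: set $W_0:=X$ and, given $W_{j-1}$, run the analysis above for $\mu_j$ with $W_{j-1}$ in place of $X$ (all arguments transfer verbatim, using that $W_{j-1}$ is finite-codimensional and $T$-invariant) to obtain $W_j:=\overline{(\mu_j-T)^{N_j}W_{j-1}}$, so that $W_j=\overline{p_j(T)X}$ for the real polynomial $p_j(t)=\prod_{i\leq j}(t-\mu_i)^{N_i}$. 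Since the operators $\mu_i-T$ mutually commute and $\overline{R\,\overline{M}}=\overline{RM}$, the relation $\overline{(\mu_i-T)W_i}=W_i$ is preserved under each subsequent range-restriction, so $\overline{(\mu_i-T)W_k}=W_k$ for every $i$. Taking $W:=W_k$ and $S:=T|_W$, Lemma~\ref{RDR} gives $\sigma_p(S^*)\cap A=\emptyset$; Remark~\ref{real-3.4-polynomial} (applying Lemma~\ref{real-3.4} repeatedly, since $W=\overline{p_k(T)X}$) gives $\sigma_p(S^*)\subseteq\sigma_p(T^*)$; and Proposition~\ref{spectrum-restricted-operator}(i) and (iii) give $\sigma(S)\subseteq\sigma(T)$ with $\sigma(T)\setminus\sigma(S)$ finite. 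Finally, $\sigma(S)\subseteq\sigma(T)\subseteq\mathbb{R}$ whenever \eqref{real-spectrum} holds for $T$, and $\sigma_\mathbb{R}(S)=\sigma(S)\cap\mathbb{R}$ contains $\sigma_\mathbb{R}(T)\setminus(\sigma(T)\setminus\sigma(S))$ and is therefore infinite whenever \eqref{real-cardinality} holds for $T$; this establishes (ii).
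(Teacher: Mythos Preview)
Your proof is correct and follows essentially the same approach as the paper's: both fix a real $\mu\in A\cap\sigma_p(T^*)$, descend along the chain $V_n=\overline{(\mu-T)^nX}$, invoke Theorem~\ref{real-3.3} when the chain is strictly decreasing, and iterate over the finitely many points of $A$, using Proposition~\ref{spectrum-restricted-operator} and Remark~\ref{real-3.4-polynomial} to control $\sigma(S)$ and $\sigma_p(S^*)$. The only cosmetic differences are that you apply Theorem~\ref{real-3.3} explicitly to $\mu-T$ (the paper leaves this implicit), you re-derive the finite-dimensional-range case directly rather than citing \cite[Lemma~2.11]{SW14}, and you verify that previously eliminated $\mu_i$ stay eliminated via preservation of the dense-range condition $\overline{(\mu_i-T)W_j}=W_j$, whereas the paper gets this from the nested containment $\sigma_p(S_n^*)\subseteq\sigma_p(T^*)$ in Remark~\ref{real-3.4-polynomial}.
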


\begin{proof}We follow the proof of \cite[Proposition 3.5]{SW14}.

Fix $\lambda\in\sigma_p(T^*)\cap A$.  If any of the $T$-invariant subspaces $V_n:=\overline{(\lambda-T)^nX}$, $n\in\mathbb{N}$, have finite dimension then we get (i) by \cite[Lemma 2.11]{SW14}.  Thus we may assume $(V_n)_{n=1}^\infty$ all have finite codimension in $X$, as otherwise we have (i) anyway.  By Remark \ref{real-3.4-polynomial}, the operators $S_n:=T|_{V_n}\in\mathcal{B}(X)$ all satisfy $\sigma_p(S_n^*)\subseteq\sigma_p(T^*)$, $n\in\mathbb{N}$.  Consider the case where $\lambda\in\bigcap_{n=1}^\infty\sigma_p(S_n^*)$.  Then by Lemma \ref{RDR}, $(V_n)_{n=1}^\infty$ is strictly decreasing (under the $\supsetneq$ relation).  Now apply Theorem \ref{real-3.3} to get (i).

Thus we may assume $\sigma_p(S_n^*)\subseteq\sigma_p(T^*)\setminus\{\lambda\}$ for some $n\in\mathbb{N}$.  Note that if either condition \eqref{real-spectrum} or \eqref{real-cardinality} holds for $T$ then the same condition holds for $S_n$ by Proposition \ref{spectrum-restricted-operator}.  Thus, if $\sigma_p(S_n^*)\cap A=\emptyset$ then we have (ii).  Otherwise, we can repeat the process in the previous paragraph to eliminate another element of $A$.  As $A$ is finite, we cannot keep losing elements of $A$ indefinitely, eventually giving us (i) or (ii).\end{proof}

\begin{remark}\label{SS-remark}In Theorem \ref{real-3.5} above, we can replace the condition that $T$ satisfies either \eqref{real-spectrum} or \eqref{real-cardinality} with the condition that $T$ is strictly singular.  This is because the restriction of a strictly singular operator to an invariant subspace is again strictly singular, and hence contains zero in its spectrum.\end{remark}

It was observed in \cite{SW16} that every strictly singular operator acting on an infinite-dimensional complex Banach space admits an AIHS of defect $\leq 1$.  We can now extend this result to the real reflexive setting.

\begin{corollary}Let $X$ be an infinite-dimensional real Banach space, and let $T\in\mathcal{B}(X)$.  If $T$ is strictly singular then $T^*$ admits an AIHS of defect $\leq 1$.  If furthermore $X$ is reflexive then so does $T$.\end{corollary}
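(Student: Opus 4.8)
The plan is to deduce this from Theorem~\ref{real-3.5} together with Remark~\ref{SS-remark}, reducing the strictly singular case to an application of Proposition~\ref{infinitely-many-eigenvalues} (or its dual via Theorem~\ref{dual-AIHS}). By Remark~\ref{SS-remark}, the dichotomy of Theorem~\ref{real-3.5} applies when $T$ is strictly singular: either (i) $T$ admits an AIHS of defect $\le 1$, or (ii) for every finite $A\subseteq\mathbb{R}$ there is a finite-codimensional closed $T$-invariant subspace $W$ with $S:=T|_W$ satisfying $\sigma_p(S^*)\subseteq\sigma_p(T^*)\setminus A$, $\sigma(S)\subseteq\sigma(T)$, and $\sigma(T)\setminus\sigma(S)$ finite, with $S$ again strictly singular. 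In case (i), $T$ admits an AIHS of defect $\le 1$, and then so does $T^*$ by Theorem~\ref{dual-AIHS}, so we are done; the interesting case is (ii).

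In case (ii), I would argue that $\sigma_p(T^*)$ must be infinite. Indeed, if $\sigma_p(T^*)$ were finite, take $A=\sigma_p(T^*)$ in (ii) to obtain a strictly singular $S=T|_W$ on an infinite-dimensional space with $\sigma_p(S^*)=\emptyset$. But a strictly singular operator on an infinite-dimensional Banach space has $0$ in its spectrum, and in the complexification $S_\mathbb{C}$ is also strictly singular with $0\in\sigma(S_\mathbb{C})$; since $\sigma_{ess}(S_\mathbb{C})=\{0\}$ forces $\sigma(S_\mathbb{C})$ countable and hence every nonzero point of $\sigma(S_\mathbb{C})$ isolated, the boundary point $0\in\partial\sigma(S_\mathbb{C})\subseteq\sigma_{su}(S_\mathbb{C})=\sigma_p(S_\mathbb{C}^*)$, so $0\in\sigma_p(S^*)$, a contradiction. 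Hence $\sigma_p(T^*)$ is infinite. Now I want to conclude $T^*$ admits an AIHS of defect $\le 1$. Passing to the complexification, $(T_\mathbb{C})^*=(T^*)_\mathbb{C}$ has infinitely many eigenvalues; moreover $T_\mathbb{C}$ is strictly singular, so $\sigma(T_\mathbb{C})$ is countable and $0\in\partial\sigma(T_\mathbb{C})$, giving $0\in\sigma_p(T_\mathbb{C}^*)$ and also, since $T_\mathbb{C}$ cannot be invertible and $\sigma(T_\mathbb{C})\setminus\{0\}$ consists of isolated eigenvalues of finite multiplicity (Riesz theory for $\sigma_{ess}=\{0\}$), we likewise get $\sigma_p(T_\mathbb{C})$ infinite unless $\sigma(T_\mathbb{C})=\{0\}$. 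If $\sigma_p(T_\mathbb{C})$ and $\sigma_p(T_\mathbb{C}^*)$ are both infinite, Proposition~\ref{infinitely-many-eigenvalues} gives an IHS for $T_\mathbb{C}$, hence an AIHS of defect $0$. The remaining subcase $\sigma(T_\mathbb{C})=\{0\}$ (i.e. $T$ quasinilpotent) must be handled separately.

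For the quasinilpotent subcase I would instead invoke the known complex result directly: in case (ii), $T_\mathbb{C}$ is strictly singular on an infinite-dimensional complex Banach space, so by the result of \cite{SW16} cited just before the corollary, $T_\mathbb{C}$ admits an AIHS of defect $\le 1$. This shows the real operator $T$ — via the complexification — fits the hypothesis needed, but one must descend from $T_\mathbb{C}$ back to $T$. The cleanest route: by Remark~\ref{SS-remark} and Theorem~\ref{real-3.5}, either we already have (i), or we have (ii) with $\sigma_p(T^*)$ infinite; combined with the fact (proved above) that $0\in\sigma_p(T^*)$ always holds for strictly singular $T$ on an infinite-dimensional real space, the real analogue of Proposition~\ref{infinitely-many-eigenvalues} applies once we also know $\sigma_p(T)$ is infinite or $T$ is quasinilpotent. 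So the structure of the final write-up is: reduce via Theorem~\ref{real-3.5} to the case $\sigma_p(T^*)$ infinite; show $\sigma_p(T)$ is infinite using strict singularity and Riesz theory applied to $T$ on $X$ directly (eigenvalues of a strictly singular operator with $\sigma_{ess}=\{0\}$ accumulate only at $0$, and there are infinitely many unless $T$ is quasinilpotent, which is ruled out because a quasinilpotent strictly singular operator on a real Banach space — being a norm limit of its truncations — still leaves an invariant halfspace by Theorem~\ref{real-3.3} applied to the chain $\overline{T^nX}$, giving (i)); then apply the real version of Proposition~\ref{infinitely-many-eigenvalues} (which follows from the complex one by complexification, noting $\sigma_p(T_\mathbb{C})\supseteq\sigma_p(T)$ and $\sigma_p(T_\mathbb{C}^*)\supseteq\sigma_p(T^*)$ are both infinite, so $T_\mathbb{C}$ has an IHS, hence $T$ has an AIHS of defect $\le 1$) to get an AIHS of defect $\le 1$ for $T$; finally, Theorem~\ref{dual-AIHS} transfers this to $T^*$, and when $X$ is reflexive $T=(T^*)^*$ so the statement for $T^*$ also yields it for $T$.

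The main obstacle I expect is the quasinilpotent subcase: when $\sigma(T)=\{0\}$, spectral methods produce no eigenvalues of $T$ or $T^*$ directly, so one genuinely needs the chain argument of Theorem~\ref{real-3.3} applied to $V_n=\overline{T^nX}$ — one must verify these are strictly decreasing (or else a finite-dimensional one appears, giving (i) via \cite[Lemma 2.11]{SW14}), and that strict singularity of $T$ prevents stabilization of the chain, which is exactly the kind of bookkeeping already carried out inside the proof of Theorem~\ref{real-3.5}. In fact, the most economical presentation is probably to observe that Remark~\ref{SS-remark} already folds this into Theorem~\ref{real-3.5}, so that case (i) or (ii) of that theorem is the only dichotomy one needs, and case (ii) with $A$ large forces $\sigma_p(T^*)$ infinite, whereupon Proposition~\ref{infinitely-many-eigenvalues} (complexified) and Theorem~\ref{dual-AIHS} finish the argument.
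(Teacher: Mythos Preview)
Your proposal takes a much longer route than the paper's proof and contains a genuine error. The paper's argument is short: by the spectral theorem for strictly singular operators, $\sigma(T)$ is countable, so $0\in\partial\sigma_\mathbb{R}(T)$. Apply Theorem~\ref{real-3.5} (in the strictly singular form of Remark~\ref{SS-remark}) with $A=\{0\}$; in case (i) one is done via Theorem~\ref{dual-AIHS}, and in case (ii) one passes to a restriction $S=T|_W$ with $0\notin\sigma_p(S^*)$. Since $S$ is still strictly singular, $0\in\partial\sigma_\mathbb{R}(S)=\partial\sigma_\mathbb{R}(S^*)$, and now $0\in\partial\sigma_\mathbb{R}(S^*)\setminus\sigma_p(S^*)$, so Theorem~\ref{boundary-real-spectrum} applied \emph{directly to $S^*$} yields an AIHS of defect $\le 1$ for $S^*$, hence for $T^*$ (via Lemma~\ref{dual-restriction}, implicit in the ``without loss of generality''). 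You never consider applying Theorem~\ref{boundary-real-spectrum} to the adjoint, which is the key simplification.

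The specific error in your argument is the claimed identity $\sigma_{su}(S_\mathbb{C})=\sigma_p(S_\mathbb{C}^*)$. These sets are not equal: $\lambda\in\sigma_p(S^*)$ means $(\lambda-S)$ has \emph{non-dense} range (Lemma~\ref{RDR}), whereas $\lambda\in\sigma_{su}(S)$ only means the range is not all of $X$. A quasinilpotent injective operator with dense range (for instance a suitable weighted shift) has $0\in\sigma_{su}$ but $0\notin\sigma_p$ of its adjoint. So your contradiction ``$0\in\sigma_p(S^*)$ yet $\sigma_p(S^*)\cap\mathbb{R}=\emptyset$'' does not go through, and the conclusion that $\sigma_p(T^*)\cap\mathbb{R}$ must be infinite is unjustified. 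This collapses the rest of the plan, including the detours through Riesz theory, the quasinilpotent subcase, and the (also unjustified) descent from an IHS of $T_\mathbb{C}$ to an AIHS of $T$. The fix is precisely the paper's move: when $0\notin\sigma_p(S^*)$ you should not seek a contradiction but instead exploit it, since that is exactly the hypothesis of Theorem~\ref{boundary-real-spectrum} for $S^*$.
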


\begin{proof}By the spectral theorem for strictly singular operators (cf., e.g., \cite[Theorem 7.11]{AA02}), $T$ has countable spectrum so that $0\in\partial\sigma_\mathbb{R}(T)$.  Without loss of generality, let us pass if necessary to the restriction of $T$ to a finite-codimensional $T$-invariant closed subspace in order to remove zero from the point spectrum of $T^*$, which we do via Theorem \ref{real-3.5} and Remark \ref{SS-remark}.  Of course, $T$ is still strictly singular so that zero still lies in $\partial\sigma_\mathbb{R}(T)$ and hence also $\partial\sigma_\mathbb{R}(T^*)$.  Theorem \ref{boundary-real-spectrum} now gives us an AIHS of defect $\leq 1$ for $T^*$.  In case $X$ is reflexive, Theorem \ref{dual-AIHS} gives us an AIHS of defect $\leq 1$ for $T$.\end{proof}

\begin{lemma}\label{dual-restriction}Let $X$ be an infinite-dimensional (real or complex) Banach space and $T\in\mathcal{B}(X)$.  Suppose $W$ is a $T$-invariant finite-codimensional closed subspace of $X$, and write $S=T|_W\in\mathcal{B}(W)$.  If $S^*$ admits an AIHS of defect $\leq d$, $d\in\mathbb{N}_0$, then so does $T^*$.\end{lemma}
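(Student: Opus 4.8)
The plan is to push the AIHS of $S^*$ back up to $X^*$ along the restriction map. Since $W$ is closed and finite-codimensional, write $X=W\oplus E$ with $E$ finite-dimensional, and consider the restriction operator $R\colon X^*\to W^*$ defined by $Rx^*=x^*|_W$. This is bounded and surjective (by Hahn--Banach), with kernel $W^\perp$, which is finite-dimensional because $W^\perp\approx(X/W)^*$ by \eqref{annihilator-dual-relations} and $\dim(X/W)<\infty$. Because $W$ is $T$-invariant, $W^\perp$ is $T^*$-invariant, and a one-line computation evaluating on vectors of $W$ (using $Sw=Tw\in W$ for $w\in W$) shows $RT^*=S^*R$.

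Next I would take an AIHS $Z$ under $S^*$ with defect $\le d$, say $S^*Z\subseteq Z+E_0$ with $\dim E_0\le d$, and set $Y:=R^{-1}(Z)$, a closed subspace of $X^*$. Lift a spanning set of $E_0$ to a subspace $\widetilde{E_0}\subseteq X^*$ with $\dim\widetilde{E_0}\le d$ and $R\widetilde{E_0}=E_0$; surjectivity of $R$ then gives $R^{-1}(Z+E_0)=Y+\widetilde{E_0}$. Since $R(T^*Y)=S^*(RY)=S^*Z\subseteq Z+E_0$, we get $T^*Y\subseteq Y+\widetilde{E_0}$, so $Y$ is almost-invariant under $T^*$ with defect $\le d$ (and with defect $0$, i.e.\ an IHS, when $d=0$).

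It then remains to check $Y$ is a halfspace. The map $R$ restricts to a surjection $Y\to Z$ with kernel $W^\perp$, so $\dim Y\ge\dim Z=\infty$. For the codimension, note $W^\perp\subseteq Y$, and $R$ induces an isomorphism $X^*/W^\perp\to W^*$ carrying $Y/W^\perp$ onto $Z$; hence $X^*/Y\approx (X^*/W^\perp)/(Y/W^\perp)\approx W^*/Z$, which is infinite-dimensional since $Z$ is a halfspace in $W^*$. Thus $Y$ is an AIHS under $T^*$ of defect $\le d$.

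The argument is essentially bookkeeping; the only step needing a little care is the codimension computation, where one passes through the quotient $X^*/W^\perp$ to identify $X^*/Y$ with $W^*/Z$, using that $W^\perp$ is finite-dimensional and closed so that $X^*/W^\perp$ is already a Banach space on which $R$ induces a continuous bijection, hence an isomorphism by the open mapping theorem. No complexification is required here, so the real and complex cases go through simultaneously.
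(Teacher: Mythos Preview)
Your proof is correct and follows essentially the same route as the paper: both pull back the AIHS $Z\subseteq W^*$ along the restriction map $X^*\to W^*$, and your $Y=R^{-1}(Z)$ coincides exactly with the paper's $G_0=j^{-1}(Z)+W^\perp$, where $j^{-1}\colon W^*\to E^\perp$ is the section $w^*\mapsto w^*\circ P_W$. The only difference is packaging: the paper builds $G_0$ by first lifting $Z$ into $E^\perp$ via $j^{-1}$ and then adjoining $W^\perp$, whereas you take the full preimage at once and exploit the intertwining relation $RT^*=S^*R$, which makes the almost-invariance step a one-liner.
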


\begin{proof}Let us decompose $X=W\oplus E$ for some finite-dimensional subspace $E$ of $X$. Denote by $P_W\in\mathcal{B}(X)$ the bounded linear projection onto $W$ along $E$, and by $E^\perp\subseteq X^*$ the annihilator of $E$.  Define the bounded linear map $j:E^\perp\to W^*$ by the rule
\[jx^*=x^*|_W\;\;\;\text{ for all }x^*\in E^\perp,\]
which admits a bounded linear inverse $j^{-1}:W^*\to E^\perp$ given by the rule
\[j^{-1}w^*=w^*\circ P_W\;\;\;\text{ for all }w^*\in W^*.\]

Set $R:=j^{-1}\circ S^{*}\circ j:E^{\perp}\to E^{\perp}$, and let $H$ be an AIHS for $S^{*}$ with error $M$, $\text{dim}(M)\leq d$. Put $G:=j^{-1}H$ and $N:=j^{-1}M$, which are closed subspaces of $E^{\perp}$ and hence also of $X^{*}$.  Then for any $x^{*}\in E^{\perp}$ and any $x\in X$ we now have
\begin{eqnarray*}
  ((P_W^{*}T^{*})x^{*})(x) & = & x^{*}(TP_Wx) \hskip 4cm  (\mbox{note } P_Wx\in W)\\
    &=& x^{*}(SP_Wx)=(jx^{*})(SP_Wx) \hskip 1cm (\mbox {note } (jx^{*})\in W^{*} )\\
    &=& (S^{*}(jx^{*}))(P_Wx) = ((j^{-1}\circ S^{*}\circ j)(x^{*}))(x) \\
    &=& (Rx^{*})(x)
\end{eqnarray*}
so that $P_W^{*}T^{*}\equiv R$ on $E^\perp$.  Now we have
\[(P_W^{*}T^{*})G=RG=(j^{-1}\circ S^{*}\circ j)(j^{-1}H)\\=j^{-1}(S^{*}H)\subseteq j^{-1}(H+M)=G+N\]
and hence
\begin{equation}\label{dual-relation}T^{*}G=(P_W^{*}T^{*})G+((1-P_W^{*})T^{*})G\subseteq G+N+((1-P_W^{*})T^{*})G.\end{equation}

Now, $1-P_W^*$ is a projection onto $W^{\perp}$, so that $(1-P_W^*)T^*(G)\subseteq W^{\perp}$.  Since $W$ is an finite-codimensional invariant subspace for $T$, we must have $W^\perp$ a finite-dimensional invariant subspace for $T^*$. Put  $G_0:=G+W^{\perp}$ which is a halfspace in $X^*$ satisfying
\begin{multline*}
  T^*G_0 = T^{*}(G+W^{\perp}) \subseteq  T^*(G)+T^*(W^{\perp})
  \\ \subseteq G+N+((1-P_W^{*})T^{*})(G)+W^{\perp}
 \subseteq  G+N+W^{\perp}=G_0+N.
\end{multline*}
As $\text{dim}(N)=\text{dim}(M)$, the space $G_0$ is an AIHS under $T^*$ with defect $\leq d$.\end{proof}

\begin{lemma}\label{infinitely-many-eigenvalues}Let $X$ be an infinite-dimensional real Banach space, and let $T\in\mathcal{B}(X)$.  If $\sigma_p(T)\cap\sigma_p(T^*)\cap\mathbb{R}$ is infinite then $T$ admits an IHS.\end{lemma}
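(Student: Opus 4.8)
The plan is to mimic the complex-setting argument for Proposition~\ref{infinitely-many-eigenvalues}, which is what the repeated label is pointing at, but with all spectral considerations carried out through the complexification and restricted to real eigenvalues. First I would pick a sequence $(\lambda_k)_{k=1}^\infty$ of distinct reals in $\sigma_p(T)\cap\sigma_p(T^*)\cap\mathbb{R}$, together with eigenvectors $x_k\in X$ of $T$ (so $Tx_k=\lambda_k x_k$, $x_k\neq 0$) and eigenfunctionals $f_k\in X^*$ of $T^*$ (so $T^*f_k=\lambda_k f_k$, $f_k\neq 0$). The eigenvectors $(x_k)$ for distinct eigenvalues are linearly independent, so $Y_0:=[x_k]_{k=1}^\infty$ has infinite dimension and is $T$-invariant. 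Dually, the $(f_k)$ for distinct $\lambda_k$ are linearly independent, and each $f_k$ annihilates $x_j$ for $j\neq k$ since $\lambda_k f_k(x_j)=(T^*f_k)(x_j)=f_k(Tx_j)=\lambda_j f_k(x_j)$ forces $f_k(x_j)=0$.

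The natural obstruction — and I expect this to be the only real obstacle — is that $Y_0$ need not be closed, and even its closure need not have infinite codimension; we need to split the eigenvalue sequence into two infinite disjoint blocks and show the span of one block stays off the closed span of the other. So I would partition $\mathbb{N}=A\sqcup B$ into two infinite sets, set $Y:=[x_k]_{k\in A}$, and argue $\dim(X/Y)=\infty$ by exhibiting infinitely many linearly independent functionals vanishing on $Y$: the functionals $(f_k)_{k\in B}$ all annihilate $\{x_j:j\in A\}$ (by the orthogonality relation above), hence annihilate $Y$, and they are linearly independent. Thus $Y$ is a halfspace, and it is $T$-invariant because it is the closed span of eigenvectors of $T$ all lying in the kernels of the relevant $\lambda_k-T$; more precisely $TY\subseteq Y$ since $T$ acts as multiplication by $\lambda_k$ on each $x_k$ and is continuous. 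Hence $Y$ is an IHS under $T$.

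One point requiring a word of care in the real setting: the eigenfunctionals and eigenvectors genuinely live in $X^*$ and $X$ respectively (not merely in the complexifications), because the $\lambda_k$ are real, so $\lambda_k-T$ and $\lambda_k-T^*$ are honest real operators and their kernels are real subspaces; this is exactly why restricting to $\sigma_p(T)\cap\sigma_p(T^*)\cap\mathbb{R}$ rather than to the full (complexified) point spectra is what makes the argument go through verbatim. No complexification machinery is actually needed in the end — the statement $\sigma_p(T^*)\cap\mathbb{R}=\{\lambda\in\mathbb{R}:\overline{(\lambda-T)X}\neq X\}$ from Lemma~\ref{RDR} could be invoked to reinterpret the hypothesis, but the bare-hands eigenvector/eigenfunctional argument is cleaner and I would present that.
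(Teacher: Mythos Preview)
Your proposal is correct and follows essentially the same approach as the paper's proof: both establish the orthogonality relation $f(x)=0$ between a $\mu$-eigenfunctional $f\in X^*$ and a $\lambda$-eigenvector $x\in X$ for distinct real $\lambda,\mu$, and then split the infinite family of eigenvalues into two infinite pieces to obtain a $T$-invariant closed span of eigenvectors annihilated by infinitely many independent functionals. Your exposition is simply more explicit about the partition $\mathbb{N}=A\sqcup B$ and about why the real hypothesis ensures the eigenvectors and eigenfunctionals live in $X$ and $X^*$ rather than only in the complexifications.
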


\begin{proof}We follow the proof of \cite[Theorem 2.7]{PT13}.  Observe that for any $\lambda,\mu\in\sigma_p(T)\cap\sigma_p(T^*)\cap\mathbb{R}$ we can find a $\lambda$-eigenvector $x\in X$ and a $\mu$-eigenvector $f\in X^*$, and these must satisfy
\[\lambda f(x)=f(\lambda x)=f(Tx)=(T^*f)(x)=\mu f(x).\]
Thus, we can find infinitely many linearly independent $T^*$-eigenvectors annihilating infinitely many linearly independent $T$-eigenvectors $(x_n)_{n=1}^\infty$.  This gives us an IHS $[x_n]_{n=1}^\infty$.\end{proof}

\begin{theorem}\label{real-3.6}Let $X$ be an infinite-dimensional (real or complex) Banach space and $T\in\mathcal{B}(X)$ a bounded linear operator satisfying either of the conditions \eqref{real-spectrum} or \eqref{real-cardinality} from Theorem \ref{real-3.5}.  Then $T^*$ admits an AIHS of defect $\leq 1$.\end{theorem}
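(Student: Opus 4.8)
The plan is to funnel everything, through the three transfer results Theorem~\ref{dual-AIHS}, Theorem~\ref{real-3.5} and Lemma~\ref{dual-restriction}, into a single application of Theorem~\ref{boundary-real-spectrum}. I describe the real case; the complex one goes the same way, using Corollary~\ref{small-2.3} and the complex counterpart of Lemma~\ref{infinitely-many-eigenvalues} in place of the real statements, and \cite[Proposition~3.5]{SW14} in place of Theorem~\ref{real-3.5}. First I would record that, by \eqref{real-spectrum} or \eqref{real-cardinality}, the set $\sigma_\mathbb{R}(T)=\sigma(T)\cap\mathbb{R}$ is a nonempty compact subset of $\mathbb{R}$, so its relative boundary $\Gamma:=\partial\sigma_\mathbb{R}(T)$ is nonempty (it contains $\min\sigma_\mathbb{R}(T)$ and $\max\sigma_\mathbb{R}(T)$), and since $\sigma(T^*)=\sigma(T)$ we have $\Gamma=\partial\sigma_\mathbb{R}(T^*)$. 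If $T$ admits an AIHS of defect $\leq1$, Theorem~\ref{dual-AIHS} gives one for $T^*$ and we are finished; so assume it does not, which means alternative (ii) of Theorem~\ref{real-3.5} is available for every finite $A\subseteq\mathbb{R}$.

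Next I would split on $\Gamma$. If $\Gamma\not\subseteq\sigma_p(T^*)$, pick $\lambda\in\Gamma\setminus\sigma_p(T^*)$; then $\lambda\in(\partial\sigma_\mathbb{R}(T^*))\setminus\sigma_p(T^*)$, so Theorem~\ref{boundary-real-spectrum} applied to $T^*$ already produces an AIHS of defect $\leq1$ for $T^*$. Otherwise $\Gamma\subseteq\sigma_p(T^*)$, and I would treat two subcases. If $\Gamma$ is finite, apply Theorem~\ref{real-3.5}(ii) with $A:=\Gamma$ to obtain a finite-codimensional closed $T$-invariant subspace $W$ with $S:=T|_W$ satisfying $\sigma_p(S^*)\cap\Gamma=\emptyset$, $\sigma(S)\subseteq\sigma(T)$ with $\sigma(T)\setminus\sigma(S)$ finite, and still \eqref{real-spectrum} or \eqref{real-cardinality}. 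Then $\sigma_\mathbb{R}(S)$ is a nonempty closed bounded subset of $\mathbb{R}$ obtained from $\sigma_\mathbb{R}(T)$ by deleting a finite set, and a short point-set argument (the deleted points cannot lie in the relative interior of $\sigma_\mathbb{R}(T)$, since $\sigma_\mathbb{R}(S)$ is closed) shows $\emptyset\neq\partial\sigma_\mathbb{R}(S)\subseteq\Gamma$; hence $(\partial\sigma_\mathbb{R}(S^*))\setminus\sigma_p(S^*)=\partial\sigma_\mathbb{R}(S)\neq\emptyset$, so Theorem~\ref{boundary-real-spectrum} gives $S^*$ an AIHS of defect $\leq1$, and Lemma~\ref{dual-restriction} transfers it to $T^*$.

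The remaining subcase, $\Gamma\subseteq\sigma_p(T^*)$ with $\Gamma$ \emph{infinite}, is where the real setting genuinely departs from the complex one, because Lemma~\ref{infinitely-many-eigenvalues} is weaker than its complex analogue; this is the step I expect to be the main obstacle. Here $\sigma_p(T^*)\cap\mathbb{R}\supseteq\Gamma$ is infinite. If $\sigma_p(T)\cap\sigma_p(T^*)\cap\mathbb{R}$ is infinite, Lemma~\ref{infinitely-many-eigenvalues} yields an IHS for $T$ and Theorem~\ref{dual-AIHS} finishes. If instead $\sigma_p(T)\cap\mathbb{R}$ is infinite, take eigenvectors $x_n$ of $T$ with distinct real eigenvalues $\mu_n$: the closed span $Y=[x_n]_{n=1}^\infty$ is a $T$-invariant subspace of infinite dimension, so either $\codim Y=\infty$ — giving an IHS for $T$, and we conclude as before — or $Y^\perp$ is finite-dimensional; but every $T^*$-eigenvector whose eigenvalue is real and not among the $\mu_n$ lies in $Y^\perp$ (from $(\mu_n-\nu)f(x_n)=0$), and there are infinitely many linearly independent such eigenvectors unless all but finitely many elements of $\sigma_p(T^*)\cap\mathbb{R}$ already lie in $\{\mu_n\}\subseteq\sigma_p(T)$, i.e.\ unless $\sigma_p(T)\cap\sigma_p(T^*)\cap\mathbb{R}$ is infinite, which returns us to the previous case. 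Thus we may assume $\sigma_p(T)\cap\mathbb{R}$ is finite; then $\Gamma$, being infinite, is not contained in $\sigma_p(T)$, so Theorem~\ref{boundary-real-spectrum} applied to $T$ itself gives $T$ an AIHS of defect $\leq1$, and Theorem~\ref{dual-AIHS} completes the proof.
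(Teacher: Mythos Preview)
Your argument is correct; the case analysis closes, and the point-set step $\partial\sigma_\mathbb{R}(S)\subseteq\Gamma$ is valid because $\sigma_\mathbb{R}(S)$ is closed and differs from $\sigma_\mathbb{R}(T)$ by a finite set, so no point of the relative interior of $\sigma_\mathbb{R}(T)$ can become a boundary point of $\sigma_\mathbb{R}(S)$.  One stylistic wrinkle: you carry the standing assumption ``$T$ has no AIHS of defect $\le 1$'' throughout, yet in your infinite-$\Gamma$ subcases you conclude that $T$ does have one; these subcases are therefore vacuous under the assumption, and the appended ``Theorem~\ref{dual-AIHS} completes the proof'' is redundant (though harmless).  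It would read more cleanly to say that each such subcase contradicts the standing assumption.

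Your route differs from the paper's.  The paper runs an \emph{iteration}: at stage $n$ it picks $\lambda_n\in\partial\sigma_\mathbb{R}(S_n)$, reduces (via Theorem~\ref{boundary-real-spectrum}, Theorem~\ref{dual-AIHS}, and Lemma~\ref{dual-restriction}) to the case $\lambda_n\in\sigma_p(S_n)\cap\sigma_p(S_n^*)$, and then invokes Theorem~\ref{real-3.5} with the singleton $A=\{\lambda_n\}$ to pass to $S_{n+1}=S_n|_{W_{n+1}}$ with $\lambda_n\notin\sigma_p(S_{n+1}^*)$.  If the process never terminates it manufactures an infinite sequence $(\lambda_n)\subseteq\sigma_p(T)\cap\sigma_p(T^*)\cap\mathbb{R}$ and finishes via Lemma~\ref{infinitely-many-eigenvalues}.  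You instead make a single global case-split on whether $\Gamma=\partial\sigma_\mathbb{R}(T)$ is finite or infinite: when $\Gamma$ is finite you apply Theorem~\ref{real-3.5} once with $A=\Gamma$ and then Theorem~\ref{boundary-real-spectrum} to $S^*$, which is tidier than the paper's repeated peeling; when $\Gamma$ is infinite you replace the iteration by a direct eigenvector-counting argument (the $Y=[x_n]$ / $Y^\perp$ dichotomy), which the paper never isolates.  The paper's scheme is more uniform---no finite/infinite split, no ad hoc eigenspace bookkeeping---while yours avoids the inductive machinery and makes the single use of Theorem~\ref{real-3.5} with the full $A=\Gamma$ do most of the work in the finite case.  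Both proofs draw on exactly the same toolkit.
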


\begin{proof}Let us define $W_0=X$ and $S_0=T$, and select $\lambda_0\in\partial\sigma_\mathbb{R}(S_0)$.  As $\sigma_\mathbb{R}(S_0)=\sigma_\mathbb{R}(S_0^*)$, we may assume by Theorems \ref{boundary-real-spectrum} and \ref{dual-AIHS} that $\lambda_0\in\sigma_p(S_0)\cap\sigma_p(S_0^*)$.  By Theorems \ref{real-3.5} and \ref{dual-AIHS} we may assume the existence of a finite-codimensional $T$-invariant subspace $W_1\subseteq W_0$ such that $S_1:=S_0|_{W_1}\in\mathcal{B}(W_1)$ satisfies $\sigma_p(S_1^*)\subseteq\sigma_p(T)\setminus\{\lambda_0\}$ and $\sigma(S_1)\subseteq\sigma(T)$, and such that whichever conditions \eqref{real-spectrum} or \eqref{real-cardinality} hold for $T$ also hold for $S_1$.

Now select $\lambda_1\in\partial\sigma_\mathbb{R}(S_1)$, which we may again by Theorems \ref{boundary-real-spectrum} and \ref{dual-AIHS}, together with Lemma \ref{dual-restriction}, assume is an eigenvalue for both $S_1$ and $S_1^*$.  Again we apply Theorems \ref{real-3.5} and \ref{dual-AIHS} so that we can assume $W_2$ is a closed finite-codimensional $S_1$-invariant subspace with
\[\sigma_p(S_2^*)\subseteq\sigma_p(S_1^*)\setminus\{\lambda_2\}\subseteq\sigma_p(T^*)\setminus\{\lambda_1,\lambda_2\},\]
and again so that whichever conditions \eqref{real-spectrum} or \eqref{real-cardinality} hold for $T$ also hold for $S_2$.

We keep going until we have a sequence $(\lambda_n)_{n=0}^\infty$ of distinct real eigenvalues under each respective operator $S_n$.  As $S_n=T|_{W_n}$ for each $n\in\mathbb{N}_0$ we get $(\lambda_n)_{n=0}^\infty\subseteq\sigma_p(T)$.  Since $\sigma_p(S_n^*)\subseteq\sigma_p(T^*)$ for each $n\in\mathbb{N}_0$, we also have $(\lambda_n)_{n=0}^\infty\subseteq\sigma_p(T^*)$.  Applying Lemma \ref{infinitely-many-eigenvalues} completes the proof.\end{proof}

Thus we have the following immediate corollaries.

\begin{corollary}\label{real-reflexive}Let $X$ be an infinite-dimensional (real or complex) reflexive space, and let $T\in\mathcal{B}(X)$.  If $T$ satisfies either of the conditions \eqref{real-spectrum} or \eqref{real-cardinality} from Theorem \ref{real-3.5} then it admits an AIHS of defect $\leq 1$.\end{corollary}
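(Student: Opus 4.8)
The plan is to obtain this as an immediate consequence of Theorem \ref{real-3.6} together with the duality result Theorem \ref{dual-AIHS} and the reflexivity of $X$. Since $T$ satisfies \eqref{real-spectrum} or \eqref{real-cardinality}, Theorem \ref{real-3.6} applies directly (it is stated for real or complex Banach spaces) and produces an AIHS of defect $\leq 1$ for $T^*\in\mathcal{B}(X^*)$. I would then apply Theorem \ref{dual-AIHS} to the operator $T^*$ in place of $T$: since $T^*$ admits an AIHS of defect $\leq 1$, so does its adjoint $(T^*)^*=T^{**}\in\mathcal{B}(X^{**})$, with the same bound on the defect.

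The final step is to transport this back to $T$ using reflexivity. Under the canonical isometric isomorphism $\kappa\colon X\to X^{**}$ one has $\kappa T=T^{**}\kappa$, so $\kappa$ intertwines $T$ and $T^{**}$. Because $\kappa$ is an isometric isomorphism, it carries halfspaces of $X$ to halfspaces of $X^{**}$ (preserving both dimension and codimension) and finite-dimensional subspaces to finite-dimensional subspaces of the same dimension. Hence, pulling back an AIHS $Y'\subseteq X^{**}$ under $T^{**}$ with error of dimension $\leq 1$ yields $Y:=\kappa^{-1}Y'$, which is an AIHS under $T$ of defect $\leq 1$.

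I do not anticipate any genuine obstacle here; the content is entirely in Theorem \ref{real-3.6}, and the corollary is a bookkeeping argument. The only points requiring (minor) care are verifying that the defect bound is preserved at each stage — Theorem \ref{dual-AIHS} preserves the bound $d$, and the reflexive identification preserves it as well — so that the final defect is still $\leq 1$.
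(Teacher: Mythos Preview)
Your proposal is correct and follows exactly the paper's approach: the paper's proof is the single line ``Apply Theorems \ref{dual-AIHS} and \ref{real-3.6},'' and you have simply unpacked the implicit use of reflexivity (the identification $T\leftrightarrow T^{**}$ via the canonical map) that makes this two-step application work. No changes needed.
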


\begin{proof}Apply Theorems \ref{dual-AIHS} and \ref{real-3.6}.\end{proof}

\begin{corollary}\label{selfadjoint}Every selfadjoint operator acting on an infinite-dimensional (real or complex) Hilbert space admits an AIHS of defect $\leq 1$.\end{corollary}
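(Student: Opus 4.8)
The plan is to reduce everything to Corollary \ref{real-reflexive}. Since complex Hilbert spaces are reflexive, the complex case is in fact already covered by \cite[Theorem 2.7]{PT13}, so the real content of the statement is the real case. In either setting a Hilbert space is reflexive, so by Corollary \ref{real-reflexive} it suffices to check that a selfadjoint $T$ satisfies one of the hypotheses of Theorem \ref{real-3.5}, and I will verify \eqref{real-spectrum}, that is, $\sigma(T)\subseteq\mathbb{R}$.

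For a selfadjoint operator on a \emph{complex} Hilbert space this is the classical fact that the spectrum is real: if $\lambda=a+ib$ with $a,b\in\mathbb{R}$ and $b\neq 0$, then selfadjointness makes $\langle(a-T)x,x\rangle$ real, so that $\|(\lambda-T)x\|^2=\|(a-T)x\|^2+b^2\|x\|^2\geq b^2\|x\|^2$; thus $\lambda-T$ is bounded below, and the same estimate applied to $(\lambda-T)^\ast=\bar\lambda-T$ gives dense range, whence $\lambda\in\rho(T)$. In the \emph{real} case I would pass to the complexification: $H_\mathbb{C}$ carries a natural complex inner product (with norm equivalent to the one appearing in \eqref{complexification-estimate}) for which $T_\mathbb{C}$ is selfadjoint, so the complex case gives $\sigma(T):=\sigma(T_\mathbb{C})\subseteq\mathbb{R}$, the spectrum being unaffected by passing to an equivalent norm.

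Once \eqref{real-spectrum} is established, Corollary \ref{real-reflexive} produces an AIHS under $T$ of defect $\leq 1$, completing the proof. I do not anticipate any genuine obstacle here: the only step needing a line of justification in the real case is that the complexification norm defined in Section 1 is equivalent to a bona fide Hilbert-space complexification norm, which is immediate from \eqref{complexification-estimate}; everything else is a direct appeal to Corollary \ref{real-reflexive}.
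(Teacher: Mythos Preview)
Your proposal is correct and matches the paper's approach exactly: the paper states this as an immediate corollary (with no written proof) of Corollary \ref{real-reflexive}, relying implicitly on the reflexivity of Hilbert spaces and the fact that selfadjoint operators have real spectrum, which is precisely the route you take. Your only addition is to spell out the standard argument for $\sigma(T)\subseteq\mathbb{R}$ in both the complex and real (via complexification) settings, which is fine and does not deviate from the intended reasoning.
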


In \cite[\S4]{SW16} it was shown that if $X$ is an infinite-dimensional complex Banach space then for every $T\in\mathcal{B}(X)$ there exists a rank-1 operator $F\in\mathcal{B}(X)$ such that for each $\varepsilon>0$ there there is a nuclear operator $N\in\mathcal{B}(X)$ of norm $<\varepsilon$, and such that $T+F+N$ admits an IHS.  In particular, every operator acting on $X$ admits an EIHS.  We extend this result to the real setting as follows.

\begin{theorem}Let $X$ be an infinite-dimensional real Banach space and $T\in\mathcal{B}(X)$.  If either condition \eqref{real-spectrum} or \eqref{real-cardinality} from Theorem \ref{real-3.5} holds, then there exists a rank-1 operator $F\in\mathcal{B}(X)$ such that for every $\varepsilon>0$ there there is a nuclear operator $N\in\mathcal{N}(X)$ with $\|N\|<\varepsilon$ and such that $T+F+N$ admits an IHS. In particular, $T$ admits an EIHS.\end{theorem}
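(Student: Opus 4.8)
The plan is to follow the proof of \cite[\S4]{SW16}, substituting for its complex input our Theorem \ref{real-3.6} (which, under either hypothesis \eqref{real-spectrum} or \eqref{real-cardinality}, gives that $T^*$ admits an AIHS of defect $\le 1$) and replacing the spectral arguments it uses by the real-setting versions established above. I would run the iterative construction from the proof of Theorem \ref{real-3.6}: build a decreasing chain $X = W_0 \supseteq W_1 \supseteq \cdots$ of finite-codimensional closed $T$-invariant subspaces, with $S_n := T|_{W_n}$, choosing at each step a boundary point $\lambda_n \in \partial\sigma_\mathbb{R}(S_n)$ (which exists since, by Proposition \ref{spectrum-restricted-operator}, whichever of \eqref{real-spectrum}, \eqref{real-cardinality} holds for $T$ holds for every $S_n$). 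At each step one of three things occurs. If $\lambda_n \notin \sigma_p(S_n)$, then, exactly as in the proof of Theorem \ref{boundary-real-spectrum} (uniform boundedness together with Proposition \ref{small-norm-perturbation}), there is a rank-$1$ operator $F_n \in \mathcal{B}(W_n)$ with $S_n + F_n$ admitting an IHS; putting $F := F_n P_{W_n}$, which is again rank $1$, and using that $W_n$ is $T$-invariant with $T|_{W_n} = S_n$, one checks $(T+F)y = (S_n+F_n)y$ for $y$ in that IHS, so $T+F$ admits an IHS and we take $N = 0$. If instead $\lambda_n \in \sigma_p(S_n) \cap \sigma_p(S_n^*) \cap \mathbb{R}$ the chain extends; should this recur indefinitely, $(\lambda_n)_{n=0}^\infty \subseteq \sigma_p(T) \cap \sigma_p(T^*) \cap \mathbb{R}$ is infinite and Lemma \ref{infinitely-many-eigenvalues} gives $T$ an IHS outright ($F = N = 0$). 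The remaining possibility is that at some stage $\lambda_n \in \sigma_p(S_n) \setminus \sigma_p(S_n^*)$; this is exactly where non-reflexivity intervenes, since this case in the proof of Theorem \ref{real-3.6} only yields an AIHS for $S_n^*$. Because a rank-$1$-plus-nuclear perturbation of $S_n$ producing an IHS pushes forward through $P_{W_n}$ to one of $T$ with an arbitrarily small increase of norm, it suffices to prove the theorem with $S_n$ in place of $T$.

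Thus, after a scalar shift, I may assume $0 \in \partial\sigma_\mathbb{R}(T)$, $0 \in \sigma_p(T)$, and $\overline{TX} = X$ (i.e. $0 \notin \sigma_p(T^*)$ by Lemma \ref{RDR}), still with \eqref{real-spectrum} or \eqref{real-cardinality} in force. Since $\overline{TX} = X$ forces $\mathcal{N}(T)$ to be finite-dimensional (otherwise $\mathcal{N}(T)$, being infinite-dimensional hence infinite-codimensional, is already an IHS), and since, were $\mathcal{N}^\infty(T) := \bigcup_{k}\mathcal{N}(T^k)$ finite-dimensional, one could pass to the finite-dimensional quotient $X/\mathcal{N}^\infty(T)$ — on which the induced operator has $0$ off its point spectrum and still lies in $\partial\sigma_\mathbb{R}$ — apply Proposition \ref{small-norm-perturbation} there, and lift the rank-$1$ perturbation back, the real remaining case is: $\mathcal{N}^\infty(T)$ infinite-dimensional (with every layer $\mathcal{N}(T^k)$ finite-dimensional) and $\overline{\mathcal{N}^\infty(T)}$ of infinite codimension (finite codimension again giving an IHS directly). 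Here, following \cite[\S4]{SW16}, I would fix once and for all a rank-$1$ operator $F$ and then, given $\veps>0$, build a nuclear operator $N_0$ with $\norm{N_0}<\veps/2$, supported so as to ``regularize'' $T$ along $\overline{\mathcal{N}^\infty(T)}$, arranging that $0 \notin \sigma_p(T+F+N_0)$ while $0 \in \sigma_{ess}(T+F+N_0) = \sigma_{ess}(T)$, so that some sequence of reals tends to $0$ along which the resolvent of $T+F+N_0$ blows up; Proposition \ref{small-norm-perturbation} applied to $T+F+N_0$ then produces a rank-$1$ operator $G$ with $\norm{G}<\veps/2$ such that $T+F+N_0+G$ admits an IHS, and $N := N_0 + G$ is nuclear with $\norm{N}<\veps$. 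Since $F+N$ is compact, $T$ admits an EIHS in particular.

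I expect the genuine difficulty to be this last nuclear construction, carried out over $\mathbb{R}$. The argument in \cite[\S4]{SW16} is written over $\mathbb{C}$ and uses the holomorphic functional calculus and spectral projections; here it must be routed through the complexification $T_\mathbb{C}$, tracking — as throughout this section — which vectors lie in $X$ versus $X_\mathbb{C}$ and which functionals in $X^*$ versus $(X_\mathbb{C})^*$, keeping the perturbation real so that nuclearity and the norm estimate transfer via \eqref{complexification-estimate}, and using $\sigma_\mathbb{R}$, $\rho_\mathbb{R}$ and real resolvents wherever the original traces a contour or punctured disc about $0$ — precisely the adjustment already made in passing from Corollary \ref{small-2.3} to Theorem \ref{boundary-real-spectrum}. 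A secondary point needing care is to verify that the perturbation can be chosen to leave $0$ accessible by real resolvent points (not merely to leave $0$ in $\sigma(T+F+N)$), which is what allows Proposition \ref{small-norm-perturbation} to apply.
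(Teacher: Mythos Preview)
Your initial reduction is close in spirit to the paper's: both arrive, after possibly restricting to a finite-codimensional $T$-invariant subspace via Theorem~\ref{real-3.5}, at the situation $0\in\partial\sigma_\mathbb{R}(T)\setminus\sigma_p(T^*)$. But from this point the paper takes a much shorter route than the one you propose, and the difficulty you anticipate --- adapting the holomorphic functional calculus from \cite{SW16} through the complexification, and keeping $0$ accessible from $\rho_\mathbb{R}$ after perturbing --- simply does not arise.

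The paper's key observation is this. Since $\partial\sigma_\mathbb{R}(T)\subseteq\partial\sigma(T)\subseteq\sigma_{su}(T)$, we have $0\in\sigma_{su}(T)$; combined with $0\notin\sigma_p(T^*)$ (equivalently $\overline{TX}=X$, by Lemma~\ref{RDR}) this forces $T|_U$ to fail to be bounded below on \emph{every} finite-codimensional closed subspace $U$: if some $T|_U$ were bounded below then $T$ would have closed range, and dense closed range gives $TX=X$, contradicting $0\in\sigma_{su}(T)$. This is three lines and uses nothing about the scalar field. The paper then invokes only the final paragraph of the proof of \cite[Theorem~4.2]{SW16}, which shows that whenever $T|_U$ is not bounded below on any finite-codimensional $U$, there is a nuclear $N$ with $\|N\|<\varepsilon$ such that $T+N$ admits an IHS. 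That construction is a pure Banach-space argument (select a basic sequence of near-null unit vectors lying in successively smaller finite-codimensional subspaces, then perturb by a nuclear operator killing them) with no complex-analytic content, so it carries over to the real setting verbatim.

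In other words, the holomorphic calculus and spectral projections in \cite[\S4]{SW16} are used there only to \emph{reach} the ``not bounded below'' condition in the complex case; here that condition is reached via the real-spectrum reasoning above, and the remainder is field-independent. Your $\mathcal{N}^\infty(T)$ case analysis, the ``regularizing'' nuclear $N_0$ designed to push $0$ out of $\sigma_p$ while keeping it in $\sigma_{ess}$, and the secondary worry about real resolvent accessibility are all detours around a step the paper handles in one paragraph. Your plan is not wrong in outline, but it is aimed at adapting exactly the part of \cite{SW16} that does not need adapting, while missing the short bridge (the ``not bounded below'' criterion) that makes the adaptation trivial.
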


\begin{proof}Consider the case where $\partial\sigma_\mathbb{R}(T)$ is infinite.  Then we may assume by Theorem \ref{boundary-real-spectrum} that $\sigma_p(T)\cap\mathbb{R}$ is also infinite and hence, shifting if necessary, by Lemma \ref{infinitely-many-eigenvalues} that $0\in(\partial\sigma_\mathbb{R}(T))\setminus\sigma_p(T^*)$.  Next, consider the case where $\partial\sigma_\mathbb{R}(T)$ is finite.  If $\sigma_\mathbb{R}(T)$ is finite then condition \eqref{real-spectrum} must hold, which means we can apply Theorem \ref{real-3.5} to again to obtain $0\in(\partial\sigma_\mathbb{R}\sigma(T))\setminus\sigma_p(T^*)$.  Otherwise $\sigma_\mathbb{R}(T)$ is infinite and $\partial\sigma_\mathbb{R}(T)$ is finite, which means we can pass to a shifted operator if necessary so that $0\in\partial\sigma_\mathbb{R}(T)$ and is a limit point of $\sigma_\mathbb{R}(T)$.  By Theorem \ref{real-3.5} we can find a $T$-invariant finite-codimensional closed subspace $W$ of $X$ such that $0\notin\sigma_p(T|_W^*)$.  By Proposition \ref{spectrum-restricted-operator} we still have $0\in\partial\sigma_\mathbb{R}(T)$ so that, again, by passing to $T|_W$ if necessary, we may assume that $0\in(\partial\sigma_\mathbb{R}(T))\setminus\sigma_p(T^*)$.

Thus, in all cases, the relations $\partial\sigma_\mathbb{R}(T)\subseteq\partial\sigma(T)\subseteq\sigma_{su}(T)$ (cf., e.g., \cite[Theorem 2.42]{Ai04}) give us $0\in\sigma_{su}(T)\cap\sigma_p(T)$.  So, at this point we may follow the proof of \cite[Theorem 4.2]{SW16}.  We claim that $T|_U$ is not bounded below for any finite-codimensional closed subspace $U$ of $X$.  Otherwise, towards a contradiction $T|_U$ and hence also $T$ have closed range by \cite[Theorem 2.5]{AA02}.  Since $0\notin\sigma_p(T^*)$, by Lemma \ref{RDR} we must have $\overline{TX}=X$ and hence $TX=X$.  This contradicts the fact that $0\in\sigma_{su}(T)$, and so the claim is proved.  Note that in the last paragraph if the proof to \cite[Theorem 4.2]{SW16} it was shown that whenever this condition holds---that is, whenever $T|_U$ fails to be bounded below for finite-codimensional closed subspaces $U$ of $X$---then $T+N$ admits an IHS for some nuclear operator $N\in\mathcal{B}(X)$ of norm $<\varepsilon$.  This completes the proof.
\end{proof}

\end{document}